\documentclass{amsart}

\usepackage{array,booktabs,comment,delarray,float,rotating,titlesec}
\usepackage{amsmath,amsfonts,amssymb,amsthm}
\usepackage{bbm,bm,cases,dsfont,extarrows,mathrsfs,mathtools}
\usepackage{graphicx,xcolor}
\usepackage{pb-diagram,tikz,tikz-cd}
\usetikzlibrary{arrows,graphs}
\usepackage[all]{xy}
\usepackage[numbers]{natbib}
\usepackage{hyperref}

\makeatletter
\providecommand\@dotsep{4.5}
\renewcommand*\l@section[2]{\@dottedtocline{1}{1.5em}{3em}{#1}{#2}}
\renewcommand*\l@subsection[2]{\@dottedtocline{2}{3.0em}{3.8em}{#1}{#2}}
\makeatother

\titleformat{\section}{\normalfont\bfseries\Large}{\thesection}{1em}{}
\titlespacing*{\section}{0pt}{3.5ex plus 1ex minus .2ex}{2.3ex plus .2ex}

\titleformat{\subsection}{\normalfont\bfseries\large}{\thesubsection}{1em}{}
\titlespacing*{\subsection}{0pt}{2.5ex plus 1ex minus .2ex}{1.5ex plus .2ex}

\newtheorem{theorem}{Theorem}[section]
\newtheorem{lemma}[theorem]{Lemma}
\newtheorem{definition}[theorem]{Definition}
\newtheorem{corollary}[theorem]{Corollary}
\newtheorem{proposition}[theorem]{Proposition}
\newtheorem{example}[theorem]{Example}
\newtheorem{remark}[theorem]{Remark}
\newtheorem{conjecture}[theorem]{Conjecture}

\newtheorem*{claim}{Claim}

\newcommand{\red}[1]{#1}
\newcommand{\review}[1]{#1}
\newcommand{\revision}[1]{#1}

\renewcommand{\hom}{\operatorname{Hom}}
\newcommand{\ext}{\operatorname{Ext}}
\renewcommand{\mod}{\bmod}

\newcommand{\add}{\operatorname{add}}
\newcommand{\filt}{\operatorname{Filt}}

\newcommand{\id}{\mathbf{id}}
\newcommand{\cA}{\mathcal A}

\newcommand{\cT}{\mathcal T}
\newcommand{\cF}{\mathcal F}

\newcommand{\cJ}{\mathcal J}

\renewcommand{\ker}{{\rm Ker}}
\newcommand{\coker}{{\rm Coker}}
\newcommand{\im}{{\rm Im}}

\newcommand{\radlayers}[1]{\begin{smallmatrix}#1\end{smallmatrix}}

\newcommand{\bC}{\mathbb C}
\newcommand{\bQ}{\mathbb Q}

\newcommand{\bR}{\mathbb R}
\newcommand{\bF}{\mathbb F}
\newcommand{\bZ}{\mathbb Z}

\newcommand{\lra}{\longrightarrow}

\newcommand{\ra}{\rightarrow}
\newcommand{\sdp}{\times\kern-.2em\vrule height1.1ex depth-.05ex}
\newcommand{\epi}{\lra \kern-.8em\ra}

\makeatletter
\@addtoreset{equation}{section}
\makeatother

\title[Stability functions inducing stability data]{Stability functions inducing stability data}

\author{Fang Li}
\address{School of Mathematical Sciences, Zhejiang University, Yuhangtang Road 866, Hangzhou, Zhejiang 310058, China P.R.}
\email{fangli@zju.edu.cn}

\author{Kangping Qu}
\address{School of Mathematical Sciences, Zhejiang University, Yuhangtang Road 866, Hangzhou, Zhejiang 310058, China P.R.}
\email{qukp.zju@gmail.com}

\date{\today}

\begin{document}
\begin{abstract}
In this paper, we study when a stability data on an abelian length category is induced by a stability function. We give an intrinsic characterization of such stability data. For at most six phases, an inducing function exists if and only if every semistable subcategory is wide. An example with seven phases shows that this bound is sharp. For finite phase sets and finite rank, an inducing function exists if and only if an inducing red path exists. We also give a maximal green sequence of length eleven in type $D_4$ that no stability function induces. This disproves the red-path conjecture of Br\"ustle--Smith--Treffinger.
\end{abstract}

\maketitle

\tableofcontents
\section{Introduction}

Stability functions have their geometric origins in the theory of vector bundles on algebraic curves. In Mumford's construction of moduli spaces, stability is expressed by comparing the slope $\mu(E)=\deg(E)/\operatorname{rk}(E)$ of a vector bundle with the slopes of its proper nonzero subbundles \cite{Mumford1963}. The Narasimhan--Seshadri theorem gives this condition an analytic interpretation: a holomorphic vector bundle of degree zero on a compact Riemann surface is stable if and only if it arises from an irreducible unitary representation of the fundamental group \cite{NarasimhanSeshadri1965}. The additivity of degree and rank makes slope satisfy the see-saw property for short exact sequences of nonzero vector bundles, providing a model for the categorical notion of a stability function.

Stability functions assign values in a totally ordered set to the nonzero objects of an abelian category and satisfy the see-saw property for short exact sequences \cite{Rudakov1997}. On a length category, they determine semistable objects and Harder--Narasimhan (HN) filtrations. In 2004, Gorodentsev, Kuleshov and Rudakov defined stability data by specifying ordered semistable subcategories and requiring HN filtrations, without first choosing a stability function \cite[Definition~2.4]{GKR2004}. They observed that additive slopes give see-saw orderings, which in turn give stability data, and asked how much of the stronger structure can be recovered from the weaker one \cite[p.~753]{GKR2004}. We study the step from stability data to a see-saw ordering: given a stability data $(\Phi,\{\Pi_\varphi\}_{\varphi\in\Phi})$, when is there a stability function whose semistable objects are exactly the nonzero objects in the $\Pi_\varphi$, and whose value on each $\Pi_\varphi$ is constant and strictly increases with $\varphi$? We say that such a function \emph{induces} the stability data.

This question also concerns torsion classes. A stability data determines a chain of torsion classes, and a finest stability data with finitely many phases corresponds to a finite maximal green sequence in abelian length categories \cite{LQInfiniteMGS}. Chen, Lin and Ruan studied refinements of stability data and their relation to torsion pairs \cite{ChenLinRuan2023}. Br\"ustle, Smith and Treffinger showed that the semistable subcategories of a stability function are wide and studied maximal green sequences induced by stability functions and red paths \cite[Proposition~2.20 and Theorems~3.5 and~4.8]{BST}. Liu and Li characterized maximal green sequences induced by central charges through crossing inequalities \cite[Theorem~3.7]{LiLiu2020}. These results connect the question about given semistable subcategories to chains of torsion classes and to stability functions constructed from homomorphisms on $K_0(\cA)$.

The first result gives a criterion for arbitrary phase sets; $K_0(\cA)$ may also have infinite rank. For $\varphi\in\Phi$, write $H_\varphi$ for the $\bQ$-span of the classes of objects in $\Pi_\varphi$.

\begin{theorem}\label{thm:intro-general-criterion}\textup{\textbf{(Theorem~\ref{thm:positive-class-criterion})}}
  Let $\cA$ be an essentially small abelian length category, and let $(\Phi,\{\Pi_\varphi\}_{\varphi\in\Phi})$ be a stability data on $\cA$. The following conditions are equivalent:
  \begin{enumerate}
    \item A stability function into a totally ordered set induces $(\Phi,\{\Pi_\varphi\}_{\varphi\in\Phi})$.
    \item For every finite chain $\varphi_1<\cdots<\varphi_n$ in $\Phi$, the only tuple $\mathbf x_0,\ldots,\mathbf x_n\in K_0(\cA)_\bQ$ satisfying $\mathbf x_0=\mathbf x_n=0$, $\mathbf x_i$ in the nonnegative cone of $K_0(\cA)_\bQ$ for $1\leq i<n$, and $\mathbf x_i-\mathbf x_{i-1}\in H_{\varphi_i}$ for $1\leq i\leq n$ is the zero tuple.
    \item There are a totally ordered field $\bF$ and group homomorphisms $\theta_\varphi:K_0(\cA)\to\bF$ for $\varphi\in\Phi$ such that their $\bQ$-linear extensions vanish on $H_\varphi$ and $\theta_\varphi([S])>\theta_\psi([S])$ whenever $\varphi<\psi$ and $S$ is simple in $\cA$.
  \end{enumerate}
\end{theorem}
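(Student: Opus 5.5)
The plan is to prove the cycle $(3)\Rightarrow(1)\Rightarrow(2)\Rightarrow(3)$. I expect $(3)\Rightarrow(1)$ to be fairly direct, $(2)\Rightarrow(3)$ to be a transposition (Farkas/Motzkin) argument, and $(1)\Rightarrow(2)$ to be the main obstacle.

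\emph{Step $(3)\Rightarrow(1)$.} Given the $\theta_\varphi$, let $x=[X]$ for a nonzero object $X$. Then $x$ is a nonzero nonnegative integer combination of classes of simples. The strict inequality on simples therefore gives $\theta_\varphi(x)>\theta_\psi(x)$ whenever $\varphi<\psi$. Hence $L_X=\{\varphi:\theta_\varphi(x)>0\}$ is an initial segment of $\Phi$, and at most one $\varphi$ satisfies $\theta_\varphi(x)=0$.

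I would set $\sigma(X)=\varphi$ if such a $\varphi$ exists, and otherwise let $\sigma(X)$ be the cut $L_X$. The values live in the totally ordered set of phases and cuts of $\Phi$, where a cut is placed between the phases it separates.

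\begin{itemize}
\item \emph{See-saw.} For $0\to A\to B\to C\to0$ we have $x_B=x_A+x_C$, so at each $\varphi$ the sign of $\theta_\varphi(x_B)$ is squeezed between the signs for $A$ and $C$. Comparing the resulting initial segments gives the see-saw property.
\item \emph{Objects of $\Pi_\varphi$ are semistable.} If $X\in\Pi_\varphi$, then $\sigma(X)=\varphi$. A quotient $Y$ of $X$ has an HN filtration whose factors have phases $\psi\ge\varphi$, because of Hom-vanishing in the stability data. For a positive class $z$ of a factor in $\Pi_\psi$ we get $\theta_\varphi(z)\ge\theta_\psi(z)=0$, so $\theta_\varphi(y)\ge 0$ and $\sigma(Y)\ge\varphi$. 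Thus $X$ is $\sigma$-semistable.
\item \emph{Semistable objects lie in some $\Pi_\varphi$.} If $X$ is $\sigma$-semistable but its HN filtration has factors of phases $\varphi_1>\cdots>\varphi_n$ with $n\ge 2$, the same sign computation shows that the first factor $X_1\subset X$ satisfies $\sigma(X_1)>\sigma(X)$. This is a contradiction.
\end{itemize}

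\emph{Step $(2)\Rightarrow(3)$.} Condition (3) asks for a family of linear functionals satisfying finitely-checkable linear equalities and strict inequalities. Condition (2) is exactly the statement that no nontrivial certificate of infeasibility exists.

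\begin{itemize}
\item \emph{Finite case.} For a finite chain and finitely many simples, I would apply Motzkin's transposition theorem over $\bQ$ to the system $\theta_{\varphi_i}|_{H_{\varphi_i}}=0$, $\theta_{\varphi_i}([S])-\theta_{\varphi_{i+1}}([S])>0$. A dual certificate consists of nonnegative multipliers $\lambda_{i,S}$. These assemble into nonnegative vectors $\mathbf x_i=\sum_S\lambda_{i,S}[S]$ with $\mathbf x_i-\mathbf x_{i-1}\in H_{\varphi_i}$ and $\mathbf x_0=\mathbf x_n=0$, which (2) forbids.
\item \emph{Arbitrary case.} For arbitrary $\Phi$ and infinite rank, I would pass to a compactness argument. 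One route is an ultraproduct of the finite solutions, taken over finite subsets of $\Phi$ and of the simples. It yields $\theta_\varphi$ with values in a nonstandard ordered field $\bF$; this is why (3) allows a general ordered field.
\end{itemize}

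\emph{Step $(1)\Rightarrow(2)$ (main obstacle).} Suppose $\sigma$ induces the data and a nonzero tuple exists. After scaling, all $\mathbf x_i$ are integral. Using that each $\Pi_\varphi$ is wide (\cite[Proposition~2.20]{BST}), write $\mathbf x_i-\mathbf x_{i-1}=[P_i]-[Q_i]$ with $P_i,Q_i\in\Pi_{\varphi_i}$. The idea is to realize the nonnegative classes $\mathbf x_i$ by objects and telescope the relations into short exact sequences. The see-saw property would then force $\sigma$-values both strictly above and strictly below $\varphi_i$, a contradiction.

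Since $\sigma$ is not additive, I would first reduce to a minimal counterexample, choosing $n$ minimal and then the total length of the $\mathbf x_i$ minimal. I would then try to show that $\mathbf x_1\in H_{\varphi_1}$, being nonnegative, must be the class of an object of $\Pi_{\varphi_1}$. This can then be peeled off using HN filtrations. Making this realization step rigorous is where I expect the real difficulty.
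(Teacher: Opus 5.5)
Your (2)$\Rightarrow$(3) step (a transposition theorem on finite subsystems, then an ultrapower) is sound and is essentially the paper's argument. The other two steps have genuine gaps, and because of the first one your cycle does not close.

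\textbf{The step (3)$\Rightarrow$(1) fails: your $\sigma$ is not a stability function.} Giving one value to all objects with the same sign pattern $\varphi\mapsto\operatorname{sign}\theta_\varphi([X])$ is too coarse. The pointwise squeeze you invoke does not rule out $\sigma(A)=\sigma(B)\neq\sigma(C)$.

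Here is a concrete failure. On $\mod K(1\to2)$, let $P$ be the indecomposable module with top $S_1$ and socle $S_2$. Take $\Pi_{\varphi_1}=\add S_1$ and $\Pi_{\varphi_2}=\add S_2$; this is a stability data, and $P$ has HN factors $S_2,S_1$. Set $\theta_1(S_1)=0$, $\theta_1(S_2)=1$, $\theta_2(S_1)=-1$, $\theta_2(S_2)=0$; these satisfy (3). Then $P$ and $P\oplus S_1$ both have pattern $(+,-)$ and receive the same cut. But $\sigma(S_1)=\varphi_1$, so the split sequence $0\to P\to P\oplus S_1\to S_1\to0$ violates see-saw.

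For finite $\Phi$ you can repair this by recording the position inside the gap: $\mu(M)=i+\theta_i(M)/(\theta_i(M)-\theta_{i+1}(M))$ when $\theta_i(M)>0\geq\theta_{i+1}(M)$ (Lemma~\ref{lem:decreasing-cuts-induction}). For arbitrary $\Phi$, however, a gap need not be bounded by adjacent phases, and there is no evident direct formula. This is why the paper never proves (3)$\Rightarrow$(1) directly. It proves (3)$\Rightarrow$(2) from the identity $\sum_{i<n}(\theta_i-\theta_{i+1})(\mathbf x_i)=\sum_i\theta_i(\mathbf x_i-\mathbf x_{i-1})=0$. It then proves (2)$\Rightarrow$(1) in two stages. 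First it builds admissible signings on each finite set of short exact sequences, using rational separators on the finitely many phases involved. Then it passes to all of $\mathscr E$ by Tychonoff (Lemma~\ref{lem:finite-chain-local-signing} and Proposition~\ref{prop:reduction to finite case}).

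\textbf{The step (1)$\Rightarrow$(2) is left open at exactly the point where an idea is needed.} Realizing a nonnegative $\mathbf x_1\in H_{\varphi_1}$ by an object of $\Pi_{\varphi_1}$ is not what is required. The missing ingredient is that a stability function on a length category factors through $K_0(\cA)$. The sequence $0\to A\oplus C\to E\oplus A\oplus C\to C\oplus A\to0$ has isomorphic end terms, and $0\to E\to E\oplus A\oplus C\to A\oplus C\to0$ is split. Together they force $\mu(E)=\mu(A\oplus C)$, so $\mu$ depends only on composition multiplicities (Lemma~\ref{lem:stability-splitting-neutrality}).

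With this in hand, any object of class $\mathbf x_i$ will do, for example a semisimple $Z_i$. Write $\mathbf x_i-\mathbf x_{i-1}=[E_i]-[F_i]$ with nonzero $E_i,F_i\in\Pi_{\varphi_i}$; direct sums suffice, and wideness is not needed. Then $[E_i\oplus Z_{i-1}]=[F_i\oplus Z_i]$. For the first $a$ with $Z_a\neq0$ this gives $\mu(Z_a)=\lambda_a$. Split sequences and see-saw then give inductively $Z_i\neq0$ and $\mu(Z_i)<\lambda_i$ for $a<i\leq n$, contradicting $Z_n=0$.
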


Condition~(2) is intrinsic to the given stability data: it uses only the phase order, the classes of objects in the semistable subcategories, and the nonnegative cone of $K_0(\cA)_\bQ$. In particular, the condition does not refer to a stability function or to a choice of ordered field. Condition~(3) expresses the same criterion through homomorphisms that vanish on the class spans $H_\varphi$ and decrease as the phase increases.

For at most six phases, the proof first constructs integer-valued homomorphisms $f,g$ with $f(H_i)=g(H_{i+1})=0$ and $f-g=\ell$ for two adjacent phases. It then constructs rational-valued homomorphisms for at most two phases on either side, preserving strict decrease on every simple class. This gives the homomorphisms in condition~(3). Br\"ustle, Smith and Treffinger proved that wideness of the semistable subcategories is necessary \cite[Proposition~2.20]{BST}; the next result shows that it is sufficient in this range.

\begin{theorem}\label{thm:intro-six-phase-criterion}\textup{\textbf{(Theorem~\ref{thm:small-phase-induction})}}
  Let $(\Phi,\{\Pi_\varphi\}_{\varphi\in\Phi})$ be a stability data on an essentially small abelian length category $\cA$, with $1\leq|\Phi|\leq6$. A stability function induces $(\Phi,\{\Pi_\varphi\}_{\varphi\in\Phi})$ if and only if every $\Pi_\varphi$ is wide. When every $\Pi_\varphi$ is wide, the stability function can be chosen with values in $\bQ$.
\end{theorem}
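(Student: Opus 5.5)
The necessity direction is already available: by \cite[Proposition~2.20]{BST}, the semistable subcategories of any stability function are wide. So the work is sufficiency. I will verify condition~(3) of Theorem~\ref{thm:intro-general-criterion} with $\bF=\bQ$, and then use the construction behind (3)$\Rightarrow$(1), which should produce a $\bQ$-valued stability function when the $\theta_\varphi$ are $\bQ$-valued.

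Write $\Phi=\{1<\cdots<n\}$ with $n\le 6$, and set $H_i=H_{\varphi_i}$. We need homomorphisms $\theta_1,\ldots,\theta_n:K_0(\cA)\to\bQ$ with $\theta_i(H_i)=0$ and $\theta_i([S])>\theta_{i+1}([S])$ for every simple $S$. Transitivity then gives the condition for all pairs $i<j$.

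The first key input uses wideness. Since each $\Pi_i$ is wide, $\Pi_i$ is itself a length category whose simples are stable objects of phase $i$. I would prove that the classes of these simples form a basis of $H_i$. I would also show that the subgroup generated by $\Pi_1,\ldots,\Pi_i$ is controlled by the torsion classes $\cT_{\ge i}$ and $\cT_{\le i}$ of the HN chain. Concretely, the torsion pair $(\cT_{>i},\cF_{\le i})$ gives, for each simple $S$, a decomposition of $[S]$ along its HN filtration, namely $[S]=\sum_k [S^{(k)}]$ with $[S^{(k)}]\in H_k\cap K_0^+$.

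Next, for two adjacent phases $i,i+1$, I construct integer-valued $f,g$ with $f(H_i)=0$, $g(H_{i+1})=0$ and $f-g=\ell$. Here $\ell$ is a fixed strictly positive functional, for instance the length function $\ell([S])=1$. To build them, define $f$ on a basis adapted to $H_i+H_{i+1}$ and extend. The compatibility requirement $f-g=\ell$ on $H_i\cap H_{i+1}$ reads $\ell=0$ there. So one must show $H_i\cap H_{i+1}\cap(\text{span of simple classes})$ is handled by wideness, i.e.\ that $H_i\cap H_{i+1}$ contains no vector on which $\ell$ is nonzero. I expect to prove this using $\hom(\Pi_i,\Pi_{i+1})=0$ and the fact that the HN decomposition of a class is unique in the nonnegative cone.

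Then I extend outward. For at most two phases on each side of the adjacent pair (giving $n\le 6$), I define $\theta_{i-1},\theta_{i-2},\theta_{i+2},\theta_{i+3}$ successively as rational combinations $\theta_{i-1}=f+c\,\ell'$ with $\ell'$ vanishing on $H_{i-1}$. The constant $c$ is chosen large to force strict decrease on each simple. This uses finiteness only through the finitely many HN types of simples that occur, or through a uniform bound from the decomposition above.

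The main obstacle is this outward extension. At distance two, the new functional must vanish on $H_{i\pm2}$ while dominating or being dominated by an already fixed functional on every simple class. This is a linear feasibility problem. I would attack it by a Farkas/Motzkin alternative: an obstruction is exactly a tuple as in condition~(2) of length at most three on one side. I would then show, via wideness and the torsion-class chain, that any such short tuple collapses to zero. The seven-phase counterexample suggests that this collapse genuinely fails at distance three, which is why the argument stops at $n\le 6$.
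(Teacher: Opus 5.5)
Your outline matches the paper's: necessity from \cite[Proposition~2.20]{BST}; for sufficiency, $\bQ$-valued $\theta_i$ fed into Lemma~\ref{lem:decreasing-cuts-induction}; a central adjacent pair $f,g$ with $f-g=\ell$; then two phases on each side. But both substantive steps have genuine gaps.

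\textbf{The central pair.} The conditions you impose on $f,g$ are too weak. Any final family satisfies $\theta_i(X)<\theta_j(X)=0$ for $0\ne X\in\Pi_{\varphi_j}$ with $j<i$, and $\theta_i(X)>0$ for $j>i$. So $f$ must be negative on every lower phase and positive on every higher phase, and $g$ likewise. The outward extension needs exactly these signs as input. Defining $f$ on a basis adapted to $H_i+H_{i+1}$ and extending arbitrarily gives no control over them: one functional vanishing on $H_i$ with these signs is already a global separation problem. The missing idea is the HRS tilt $\mathcal H_{i+1}=\cF_{i+1}[1]*\cT_{i+1}$. It is a length heart, which is where wideness enters (Lemmas~\ref{lem:abelian-slices-wide} and~\ref{lem:length-from-wide-HN}). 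In it, $\Pi_{\varphi_i}[1]$ and $\Pi_{\varphi_{i+1}}$ are Serre. Assign nonnegative integers to the simples of $\mathcal H_{i+1}$: $0$ on $\Pi_{\varphi_i}[1]$, $\ell(S)$ on $\Pi_{\varphi_{i+1}}$, and $1+\ell(H^0(S))$ otherwise. Setting $g=f-\ell$ then gives the whole sign table at once. The same heart also gives $H_i\cap H_{i+1}=0$, since the two spaces are spanned by disjoint sets of heart simples, and it proves your basis claim.

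Your intended justification does not work. The axiom gives $\hom(\Pi_{\varphi_{i+1}},\Pi_{\varphi_i})=0$, not the reverse. Also, classes do not determine HN types: in Example~\ref{ex:d4-seven-phase-counterexample}, $\radlayers{1\\2\\4}\in\Pi_{\varphi_3}$ and $1\oplus2\oplus4$ have the same class. Finally, an ambient simple is always semistable, so your ``decomposition of $[S]$ along its HN filtration'' is vacuous.

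\textbf{The outward extension.} The ansatz $\theta_{i-1}=f+c\,\ell'$ with $\ell'(H_{i-1})=0$ contradicts itself. It gives $\theta_{i-1}|_{H_{i-1}}=f|_{H_{i-1}}$, which is nonzero because $f<0$ there. Moreover, $\theta_{i-1}>f$ on simples would force $\ell'>0$ on every simple, hence on every nonzero class, which is incompatible with $\ell'(H_{i-1})=0$. The Farkas/Motzkin fallback only restates condition~(2) for the subchain and gives no reason why short obstructions vanish. With infinitely many ambient simples, a compactness version would yield an ordered field, not $\bQ$.

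What makes the paper's extension work is that the fringe phases are the extreme ones, not merely neighbours of the central pair.
\begin{itemize}
\item By Lemma~\ref{lem:extreme slices are Serre}, $\Pi_{\varphi_1}$ is Serre. So the new functional can be set to $0$ on ambient simples in $\Pi_{\varphi_1}$ and to $h(S)+1$ on the rest.
\item Every simple $T$ of $\Pi_{\varphi_2}$ that is not ambient simple has a unique ambient simple top $S_T$, of phase $>\varphi_2$. Its kernel satisfies $K_T\in\Pi_{\varphi_1}$ and $\ell(K_T)\le n-2$, and nonisomorphic such $T$ have nonisomorphic tops (Lemmas~\ref{lem:fringe-simple-quotients} and~\ref{lem:fringe-head-uniqueness}).
\end{itemize}
This lets one prescribe $\theta_2(S_T)=-f(K_T)-\ell(K_T)/(n-1)$ independently for each $T$. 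Integrality, $-f(T)\ge1$, then gives $\theta_2>f$ uniformly, even with infinitely many simples. These structural lemmas are where the limit of two phases per side comes from, and your plan does not contain them.
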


The bound of six phases is sharp. Example~\ref{ex:d4-seven-phase-counterexample} gives stability data on the category of finite-dimensional representations of a $D_4$ quiver over $\bC$ with seven semisimple wide semistable subcategories that are not induced by any stability function. Adding independent simple factors gives such examples with $n$ wide semistable subcategories for every finite $n>7$ (Remark~\ref{rem:seven-phase-extension}).

The six-phase result gives stability functions directly from wide semistable subcategories. In finite rank, red paths give another construction: Br\"ustle, Smith and Treffinger proved that every red path induces a stability function \cite[Theorem~4.8]{BST}. When $\Phi$ is finite and $K_0(\cA)$ has finite rank, an inducing stability function yields integer-valued homomorphisms whose values on each simple class strictly decrease with the phase (Corollary~\ref{cor:finite-phase-integral-separators}). After a common scaling, these values give consecutive vertices between $(1,\ldots,1)$ and $(-1,\ldots,-1)$. Joining the vertices by line segments gives a red path that induces the same stability data.

\begin{theorem}\label{thm:intro-red-paths}\textup{\textbf{(Corollary~\ref{cor:finite-induction-red-path})}}
  Let $(\Phi,\{\Pi_\varphi\}_{\varphi\in\Phi})$ be a stability data on an essentially small abelian length category $\cA$. If $\Phi$ is finite and $K_0(\cA)$ has finite rank, a stability function into a totally ordered set induces $(\Phi,\{\Pi_\varphi\}_{\varphi\in\Phi})$ if and only if a red path induces $(\Phi,\{\Pi_\varphi\}_{\varphi\in\Phi})$. When an inducing stability function exists, an inducing red path may be chosen piecewise linear, with rational vertices and strictly decreasing coordinates.
\end{theorem}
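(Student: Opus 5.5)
The direction ``red path $\Rightarrow$ stability function'' is \cite[Theorem~4.8]{BST}. A red path $\gamma:[0,1]\to\bR^n$ runs from $(1,\ldots,1)$ to $(-1,\ldots,-1)$ and meets each $\langle\gamma(t),-\rangle$-semistable subcategory in a controlled way. It induces the stability function $M\mapsto \inf\{t: \langle\gamma(t),[N]\rangle<0 \text{ for all quotients } N\neq 0 \text{ of } M\}$ (or the dual convention). If that function induces $(\Phi,\{\Pi_\varphi\})$, then by definition the red path induces it. So this direction needs only the definition unwinding, and all the work lies in the converse.

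For the converse, assume a stability function induces the data, with $\Phi=\{\varphi_1<\cdots<\varphi_m\}$ and $K_0(\cA)\cong\bZ^n$ having simple basis $[S_1],\ldots,[S_n]$. I will invoke Corollary~\ref{cor:finite-phase-integral-separators}. It gives integer-valued homomorphisms $\theta_1,\ldots,\theta_m:K_0(\cA)\to\bZ$ with $\theta_i(H_{\varphi_i})=0$ and $\theta_i([S_j])>\theta_{i+1}([S_j])$ for all $j$. Regard $\theta_i$ as the vector $v_i=(\theta_i([S_j]))_j\in\bZ^n$. After a common positive scaling and a shift by a large positive multiple, I will arrange that $(1,\ldots,1)>v_1>\cdots>v_m>(-1,\ldots,-1)$ coordinatewise. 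Adding a multiple of a fixed vector would destroy vanishing on $H_{\varphi_i}$, so the scaling must instead be done per index. For this I would choose the vertices in order. I take $v_0=(1,\ldots,1)$ and $v_{m+1}=(-1,\ldots,-1)$, and in between use $c_i\theta_i$ with positive rationals $c_i$. Here the monotonicity $c_i\theta_i>c_{i+1}\theta_{i+1}$ must be preserved, so I should check how the corollary's statement and condition~(3) of Theorem~\ref{thm:intro-general-criterion} permit reparametrising.

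The cleaner route is to insert extra vertices rather than rescale. I place $v_0=(1,\dots,1)$, then vertices $w_i=\theta_i/N$ for a single large $N$, then $(-1,\dots,-1)$. A common scaling by $1/N$ preserves both the strict decrease and the vanishing on $H_{\varphi_i}$. For $N$ large, all coordinates of each $w_i$ lie in $(-1,1)$, so the endpoints extend the strict decrease. Let $\gamma$ be the piecewise linear path through $v_0,w_1,\ldots,w_m,v_{m+1}$. It has rational vertices and strictly decreasing coordinates, so it is a red path.

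It remains to show that $\gamma$ induces the given data. This is the main obstacle. Along $\gamma$, an object $M$ is semistable at time $t$ when $\langle\gamma(t),[M]\rangle=0$ and $\langle\gamma(t),[N]\rangle\le 0$ for quotients $N$ (or $\ge0$ for subobjects). At the vertex $w_i$, every $M\in\Pi_{\varphi_i}$ has $\theta_i([M])=0$. Its subobjects $L$ satisfy $\theta_i([L])\ge0$, which I would get from the HN filtration of $L$ together with the strict decrease $\theta_k>\theta_i$ for $k<i$ on the nonnegative cone. Hence $\Pi_{\varphi_i}$ consists of $\gamma$-semistables at that time.

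Conversely, I need every $\gamma$-semistable object to lie in some $\Pi_{\varphi_i}$, and to have phase given by its vertex. I would take $M$ with HN factors $M_1,\ldots,M_r$ in phases $\varphi_{i_1}<\cdots<\varphi_{i_r}$. On an open segment $(w_i,w_{i+1})$, the functional is a convex combination of $\theta_i$ and $\theta_{i+1}$. Such a functional is positive on objects of phase $\le\varphi_i$ and negative on those of phase $\ge\varphi_{i+1}$, so nothing is semistable there. The same sign argument on the end segments rules them out. At the vertex $w_i$, if $r\ge2$ then either the first HN factor $M_1$ or the last one fails the sign condition, unless $r=1$ and $i_1=i$.

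Thus the red path's torsion classes match those of the data, and the induced stability function is the given one up to order isomorphism of phases.
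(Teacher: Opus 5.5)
Your construction is the paper's: take the integer separators $\theta_1,\ldots,\theta_m$ from Corollary~\ref{cor:finite-phase-integral-separators}, divide all of them by one common constant exceeding every $|\theta_i([S_j])|$, add the endpoints $\pm\mathbf 1$, and interpolate linearly. The converse is likewise just \cite[Theorem~4.8]{BST}. Note that the stability function of a red path is simply $\mu_{\boldsymbol\gamma}(M)=t_M$, the unique zero of $\rho_M$. The infimum over quotients you wrote is a different quantity and is not needed. You should also state why $\boldsymbol\gamma$ is a red path: for $M\neq0$, $[M]$ is a nonzero vector with nonnegative coordinates, so $\rho_M$ decreases strictly from $\ell(M)$ to $-\ell(M)$.

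You depart from the paper in verifying that $\boldsymbol\gamma$ induces the data, and as written your signs are reversed. Every $\rho_U$ is strictly decreasing, so $t_U\le t_M$ is equivalent to $\rho_U(t_M)\le0$. Hence $\mu_{\boldsymbol\gamma}$-semistability of $M$ means $\rho_U(t_M)\le0$ for subobjects $U$, equivalently $\ge0$ for quotients; this is the opposite of your convention.

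Your claim $\theta_i([L])\ge0$ is also false. An HN factor $L_a\in\Pi_{\varphi_k}$ of $L$ with $k<i$ has $\theta_i(L_a)<\theta_k(L_a)=0$, so in fact $\theta_i([L])\le0$. Similarly, on an open segment the convex combination of $\theta_i$ and $\theta_{i+1}$ is negative on phases $\le\varphi_i$ and positive on phases $\ge\varphi_{i+1}$, not the reverse.

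The two reversals cancel, so after correcting both your argument works. An object whose zero lies at a non-vertex time, or at a vertex with $r\ge2$, has a first HN factor with positive value there. That factor's zero is therefore later, so it destabilizes the object.

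The paper avoids this bookkeeping altogether. By Theorem~\ref{thm:bst-red-path-stability}, $\mu_{\boldsymbol\gamma}$ is a stability function. It is constant on each $\Pi_{\varphi_i}\setminus\{0\}$, with value the time of the $i$th vertex, and these values increase with $i$. Lemma~\ref{lem:HN-recognition} then gives induction at once.

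Finally, the right conclusion is that $\mu_{\boldsymbol\gamma}$ induces the given stability data. It need not be order-equivalent to the stability function you started with, and torsion classes play no role in this statement.
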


For finite-phase stability data in finite rank, the theorem gives the same existence criterion for stability functions and red paths. Br\"ustle, Smith and Treffinger proposed the following conjecture \cite[Conjecture~4.14]{BST}.

\begin{conjecture}\label{conj:intro-bst-red-path}\textup{\textbf{(Conjecture~\ref{conj:bst-red-path})}}
  Let $\cA$ be an abelian length category of finite rank. Every maximal green sequence in $\cA$ is induced by a red path.
\end{conjecture}

The seven-phase example above concerns stability data; answering Conjecture~\ref{conj:intro-bst-red-path} requires a maximal green sequence. The next example gives a maximal green sequence that no stability function induces.

\begin{example}\label{ex:intro-red-path-counterexample}\textup{\textbf{(Example~\ref{thm:d4-mgs-not-induced})}}
  For a $D_4$ quiver $Q$ over $K=\bC$, the category of finite-dimensional $KQ$-modules has a maximal green sequence of length $11$ that is not induced by any stability function into a totally ordered set. Consequently, no red path induces this sequence, and \cite[Conjecture~4.14]{BST} is false.
\end{example}

The semistable subcategories associated with this sequence are semisimple and wide. Thus the failure of induction also occurs for stability data whose torsion chain is a maximal green sequence.

The notation below is followed by the necessary definitions in Section~\ref{sec:preliminaries}. Section~\ref{sec:induction-criteria} proves the general criterion. Section~\ref{sec:wide-phase-bounds} establishes the six-phase result and its sharpness. Section~\ref{sec:mgs-red-paths} treats red paths and the counterexample to the conjecture.

\subsection*{Notation}
Unless stated otherwise, $\cA$ is an essentially small abelian length category. We use a fixed small skeleton when a set of objects is needed. All subcategories are full and closed under isomorphisms. For an abelian category $\mathcal H$, let $\operatorname{Obj}^*(\mathcal H)$ denote its nonzero objects. For subcategories $\mathcal B_1,\ldots,\mathcal B_s$ of $\cA$, $\filt(\mathcal B_1,\ldots,\mathcal B_s)$ denotes the objects admitting a finite filtration with factors in $\bigcup_{j=1}^s\mathcal B_j$. For a collection of objects $\mathcal B$, $\add\mathcal B$ consists of the direct summands of finite direct sums of objects in $\mathcal B$. For a finite-dimensional algebra $\Lambda$, $\mod\Lambda$ denotes its category of finite-dimensional left modules.

Write $\operatorname{Sim}\cA$ for a set of representatives of the isomorphism classes of simple objects of $\cA$. For $M\in\cA$ and $S\in\operatorname{Sim}\cA$, let $m_S(M)$ be the multiplicity of $S$ as a composition factor of $M$. We write $[M]$ for the class of $M$ in $K_0(\cA)$ and $\ell(M)$ for its composition length. For a homomorphism $f:K_0(\cA)\to G$ to an abelian group, $f(M)$ means $f([M])$. If $f,g:K_0(\cA)\to\bR$, we write $f>g$ when $f([S])>g([S])$ for every $S\in\operatorname{Sim}\cA$. Vector variables are boldface; their coordinates and linear functionals are not.

For a stability data with finitely many phases, write $\Phi=\{\varphi_1<\cdots<\varphi_n\}$. The term \emph{simple} is relative to the category under discussion. An \emph{ambient simple} is simple in $\cA$; an object simple in a semistable subcategory need not be ambient simple. We reserve $\phi(M)$ and $\phi^\pm(M)$ for phases determined by stability data and use $\mu$, with subscripts or primes, for stability functions. When $K_0(\cA)$ has finite rank, write $d$ for its rank; write $m$ for the length of a maximal green sequence. The number $n$ need not equal the number of values that an inducing stability function takes on all nonzero objects.

\section{Preliminaries}\label{sec:preliminaries}

\subsection{\texorpdfstring{\revision{Stability data and stability functions}}{Stability data and stability functions}}
An object of an abelian category has finite length if it admits a finite filtration with simple factors. An abelian length category is one in which every object has finite length. The Jordan--H\"older theorem gives
\begin{equation}\label{eq:grothendieck-simple-basis}
  K_0(\cA)=\bigoplus_{S\in\operatorname{Sim}\cA}\bZ[S],
  \qquad [M]=\sum_{S\in\operatorname{Sim}\cA}m_S(M)[S].
\end{equation}
The sum is finite, and $m_S(M)\geq0$. The homomorphism $\ell:K_0(\cA)\to\bZ$ defined by $\ell([S])=1$ satisfies $\ell(M)=\sum_Sm_S(M)$.

A full subcategory of an abelian category is \emph{wide} if it is closed under extensions and under kernels and cokernels of morphisms between its objects. It is \emph{Serre} if it is closed under subobjects, quotients and extensions. A wide subcategory of $\cA$ is an abelian length category. A Serre subcategory is wide, and an object in a Serre subcategory is simple there if and only if it is simple in the ambient category.

\begin{definition}\label{def:stability-data}\cite[Definition~2.4]{GKR2004}
  A stability data $(\Phi,\{\Pi_\varphi\}_{\varphi\in\Phi})$ on $\cA$ consists of a totally ordered set $\Phi$ and a nonzero extension-closed subcategory $\Pi_\varphi\subseteq\cA$ for each $\varphi\in\Phi$, such that:
  \begin{enumerate}
    \item $\hom(\Pi_\psi,\Pi_\varphi)=0$ whenever $\psi>\varphi$;
    \item every nonzero $M\in\cA$ admits a finite filtration
    \begin{equation}\label{eq:preliminary-HN-filtration}
      0=M_0\subsetneq M_1\subsetneq\cdots\subsetneq M_r=M
    \end{equation}
    with $M_j/M_{j-1}\in\Pi_{\psi_j}$ for $\psi_1>\cdots>\psi_r$ in $\Phi$.
  \end{enumerate}
\end{definition}

The filtration \eqref{eq:preliminary-HN-filtration} is the Harder--Narasimhan (HN) filtration. Its ordered phases and the isomorphism classes of its factors are uniquely determined. The nonzero objects in $\Pi_\varphi$ are \emph{semistable of phase} $\varphi$; for such an object $M$, put $\phi(M)=\varphi$. For any nonzero $M$ with the filtration \eqref{eq:preliminary-HN-filtration}, put $\phi^+(M)=\psi_1$ and $\phi^-(M)=\psi_r$. Its first HN factor $M_1$ is its maximal semistable subobject: every semistable subobject has phase at most $\phi^+(M)$, and every semistable subobject of that phase is contained in $M_1$. Dually, $M/M_{r-1}$ is its minimal semistable quotient. These conventions agree with \cite{LQInfiniteMGS}.

In the ordered-set formulation of Rudakov's stability structure \cite[Definition~1.1]{Rudakov1997}, a \emph{stability function} is an isomorphism-invariant map $\mu:\operatorname{Obj}^*(\cA)\to P$ to a totally ordered set $P$ such that every short exact sequence $0\to A\to B\to C\to0$ with all three terms nonzero satisfies exactly one of
\begin{equation}\label{eq:strict-see-saw-convention}
  \mu(A)<\mu(B)<\mu(C),\qquad
  \mu(A)=\mu(B)=\mu(C),\qquad
  \mu(A)>\mu(B)>\mu(C).
\end{equation}
This is the see-saw property. A nonzero object $M$ is $\mu$-semistable if $\mu(U)\leq\mu(M)$ for every nonzero subobject $U\subseteq M$ \cite[Definition~1.6]{Rudakov1997}. A stability function on a length category gives an HN filtration with $\mu$-semistable factors of strictly decreasing values \cite[Theorem~2.13]{BST}.

For $p\in P$, put
\begin{equation}
  \begin{aligned}
    \Pi_p^\mu&=\{0\}\cup\{M\in\operatorname{Obj}^*(\cA):\mu(M)=p,\ M\text{ is }\mu\text{-semistable}\},\\
    \Phi_\mu&=\{p\in P:\Pi_p^\mu\ne\{0\}\}.
  \end{aligned}
\end{equation}
With the order inherited from $P$, $(\Phi_\mu,\{\Pi_p^\mu\}_{p\in\Phi_\mu})$ is a stability data on $\cA$ \cite[Theorem~2.13]{BST}. We call this the stability data \emph{induced by} $\mu$. We say that a stability function $\mu:\operatorname{Obj}^*(\cA)\to P$ \emph{induces} a stability data $(\Phi,\{\Pi_\varphi\}_{\varphi\in\Phi})$ if there is an order isomorphism $\lambda:\Phi\to\Phi_\mu$ such that
\begin{equation}
  \Pi_\varphi=\Pi^\mu_{\lambda(\varphi)}\qquad(\varphi\in\Phi).
\end{equation}

A \emph{torsion pair} $(\cT,\cF)$ in an abelian category consists of full subcategories satisfying $\hom(\cT,\cF)=0$ such that every object $M$ admits a short exact sequence $0\to T_M\to M\to F_M\to0$ with $T_M\in\cT$ and $F_M\in\cF$. In a length category, a full subcategory containing zero is a torsion class if and only if it is closed under quotients and extensions. Dually, a torsion-free class is closed under subobjects and extensions. We use the conventions of \cite{LQInfiniteMGS}.

We first record how HN phases behave under subobjects, quotients and extensions.

\begin{lemma}\label{lem:HN-phase-bounds}
  \cite[Lemma~2.4]{LQInfiniteMGS} Let $\mathcal H$ be an abelian category with a stability data $(\Phi,\{\Pi_\varphi\}_{\varphi\in\Phi})$, and let $0\to A\to B\to C\to0$ be a short exact sequence in $\mathcal H$ with nonzero terms. Then:
  \begin{enumerate}
    \item $\phi^+(A)\leq\phi^+(B)$ and $\phi^-(C)\geq\phi^-(B)$;
    \item $\phi^+(B)\leq\max\{\phi^+(A),\phi^+(C)\}$ and $\phi^-(B)\geq\min\{\phi^-(A),\phi^-(C)\}$.
  \end{enumerate}
\end{lemma}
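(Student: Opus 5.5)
The plan is to derive both parts from the uniqueness of HN filtrations together with the Hom-vanishing axiom. Two standard facts will be used throughout. First, if $M$ has HN filtration \eqref{eq:preliminary-HN-filtration}, then $\hom(X,M)=0$ whenever $X$ is semistable with $\phi(X)>\phi^+(M)$. This follows by induction on $r$: apply $\hom(X,-)$ to $0\to M_{r-1}\to M\to M_r/M_{r-1}\to0$ and use axiom (1) on each factor. Dually, $\hom(M,Y)=0$ whenever $Y$ is semistable with $\phi(Y)<\phi^-(M)$.

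For part (1), let $A\subseteq B$ be nonzero. The first HN factor $A_1$ of $A$ is semistable of phase $\phi^+(A)$, and it embeds in $B$. If $\phi^+(A)>\phi^+(B)$, the first fact gives $\hom(A_1,B)=0$. This contradicts $A_1\ne0$, so $\phi^+(A)\le\phi^+(B)$. The statement for quotients is dual: the last HN factor $C/C_{s-1}$ of $C$ is a nonzero quotient of $B$. If $\phi^-(C)<\phi^-(B)$, the dual fact gives $\hom(B,C/C_{s-1})=0$, which is again a contradiction.

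For part (2), take the first HN factor $B_1$ of $B$, which is semistable of phase $\phi^+(B)$. Compose the inclusion $B_1\hookrightarrow B$ with $B\to C$.
\begin{itemize}
\item If the composite is zero, then $B_1$ embeds in $A$. Part (1) applied to $B_1\subseteq A$ gives $\phi^+(B)=\phi^+(B_1)\le\phi^+(A)$.
\item If the composite is nonzero, it is a nonzero morphism $B_1\to C$. By the first fact this forces $\phi^+(B)\le\phi^+(C)$, since otherwise $\hom(B_1,C)=0$.
\end{itemize}
In either case $\phi^+(B)\le\max\{\phi^+(A),\phi^+(C)\}$. The bound on $\phi^-$ is dual. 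Take the last factor $B/B_{r-1}$. Either $A\to B\to B/B_{r-1}$ is nonzero, which gives $\phi^-(A)\le\phi^-(B)$ by the dual fact. Or it is zero, in which case $B/B_{r-1}$ is a quotient of $C$, and the quotient half of (1) gives $\phi^-(C)\le\phi^-(B)$.

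The only step needing care is the first fact, specifically the induction over HN factors. Each $M_j/M_{j-1}$ has phase $\psi_j\le\psi_1<\phi(X)$, so $\hom(X,M_j/M_{j-1})=0$ by axiom (1). Left exactness of $\hom(X,-)$ then passes the vanishing through the extensions. Nothing beyond Definition~\ref{def:stability-data} is needed; in particular, no appeal to a stability function is made.
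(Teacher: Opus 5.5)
Your proof is correct. The paper does not prove this lemma; it quotes it from \cite[Lemma~2.4]{LQInfiniteMGS}, so there is no in-paper argument to compare with. What you give is the standard self-contained derivation from Definition~\ref{def:stability-data}. You first show Hom-vanishing from a semistable object into any object whose HN phases all lie strictly below its phase, and the dual statement, by induction along the HN filtration using left exactness of $\hom(X,-)$ and $\hom(-,Y)$. You then test the first HN factor $B_1$ against $B\to C$, and the last factor $B/B_{r-1}$ against $A\to B$.

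Two minor points are worth making explicit.
\begin{enumerate}
\item In the first case of part~(2), the subobject $B_1\subseteq A$ may be all of $A$. So you are really using the subobject half of (1) for an arbitrary nonzero subobject, not for a short exact sequence with nonzero cokernel. This is harmless, since your proof of (1) never uses $C\neq0$. Alternatively, apply your first fact directly to the nonzero map $B_1\to A$.
\item The equalities $\phi^+(B_1)=\phi(B_1)$ and $\phi^-(B/B_{r-1})=\phi(B/B_{r-1})$ use uniqueness of HN phases. The paper records this after Definition~\ref{def:stability-data}, and it follows from the same Hom-vanishing fact.
\end{enumerate}
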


For a simple object in an abelian semistable subcategory, the subobject and quotient bounds become strict.

\begin{lemma}\label{lem:simple-slice-phase-bounds}
  Let $\mathcal H$ be an essentially small abelian category with a stability data $(\Phi,\{\Pi_\varphi\}_{\varphi\in\Phi})$. Fix $\alpha\in\Phi$. Suppose that $\Pi_\alpha$ is abelian and $X$ is simple in $\Pi_\alpha$. Then:
  \begin{enumerate}
    \item\label{item:simple-slice-subobjects} Every proper nonzero subobject of $X$ in $\mathcal H$ has all its HN phases strictly below $\alpha$.
    \item\label{item:simple-slice-quotients} Dually, every proper nonzero quotient of $X$ in $\mathcal H$ has all its HN phases strictly above $\alpha$.
  \end{enumerate}
\end{lemma}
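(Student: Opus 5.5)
Let $Y\subsetneq X$ be a proper nonzero subobject in $\mathcal H$. The plan is to show that its maximal phase $\phi^+(Y)$ is strictly below $\alpha$. Since all HN phases of $Y$ are at most $\phi^+(Y)$, this gives part~\eqref{item:simple-slice-subobjects}. Part~\eqref{item:simple-slice-quotients} then follows by the dual argument, with subobjects replaced by quotients, $\phi^+$ by $\phi^-$, and first HN factors by last ones.

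First, by Lemma~\ref{lem:HN-phase-bounds}(1) applied to $0\to Y\to X\to X/Y\to0$, we have $\phi^+(Y)\leq\phi^+(X)=\alpha$, because $X$ is semistable of phase $\alpha$. It remains to exclude $\phi^+(Y)=\alpha$. Suppose it holds, and let $Y_1\subseteq Y$ be the first HN factor of $Y$. Then $Y_1\in\Pi_\alpha$, and $Y_1$ is a nonzero subobject of $X$ with $Y_1\subseteq Y\subsetneq X$. The inclusion $\iota:Y_1\to X$ is a morphism in $\Pi_\alpha$. Since $\Pi_\alpha$ is abelian, $\iota$ has a kernel and a cokernel computed in $\Pi_\alpha$, and $\iota$ is a nonzero morphism to the simple object $X$ of $\Pi_\alpha$.

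The main obstacle is that $\Pi_\alpha$ is only assumed abelian as a full subcategory; it is not assumed wide. So kernels and cokernels in $\Pi_\alpha$ need not agree with those in $\mathcal H$. I would handle this with the Hom-vanishing axiom. Let $c:X\to C$ be the cokernel of $\iota$ in $\Pi_\alpha$. By simplicity of $X$ in $\Pi_\alpha$, either $\iota$ is an epimorphism in $\Pi_\alpha$ (so $C=0$), or its image in $\Pi_\alpha$ is zero. The image is the kernel of $c$, and $\iota$ factors through it. Because $\iota$ is nonzero, the image is nonzero, hence equals $X$, so $c=0$ and $C=0$. Thus $\iota$ is an epimorphism in $\Pi_\alpha$.

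Now consider the cokernel $X/Y_1\neq0$ in $\mathcal H$. By Lemma~\ref{lem:HN-phase-bounds}(1), $\phi^-(X/Y_1)\geq\alpha$. Let $Q$ be the last HN factor of $X/Y_1$. Then $Q\in\Pi_\beta$ for some $\beta\geq\alpha$, and there is a nonzero composite $X\to X/Y_1\to Q$.
\begin{itemize}
\item If $\beta>\alpha$, this composite is a nonzero element of $\hom(\Pi_\alpha,\Pi_\beta)$. That contradicts nothing by itself, since the axiom kills $\hom(\Pi_\beta,\Pi_\alpha)$ for $\beta>\alpha$, not this direction. So I would instead observe that the composite is nonzero from $X$, and reexamine; see the last case below.
\item If $\beta=\alpha$, the map $X\to Q$ is a morphism in $\Pi_\alpha$ killing $\iota$. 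Since $\iota$ is an epimorphism in $\Pi_\alpha$, the map is zero, which is a contradiction.
\item The remaining case is $\beta>\alpha$ with all HN phases of $X/Y_1$ strictly above $\alpha$. For this case I would use the full HN filtration of $X$. By uniqueness of HN filtrations, the filtration $0\subsetneq Y_1\subsetneq X$, with factor $X/Y_1$ refined by its own HN filtration, has phases $\alpha$ followed by phases greater than $\alpha$. This is not decreasing. So instead I would compare with $X$'s own HN filtration, which is just $0\subsetneq X$ of phase $\alpha$. The maximal semistable subobject of phase $\phi^+(X)$ is $X$, and $\phi^+(X/Y_1)>\alpha$ contradicts Lemma~\ref{lem:HN-phase-bounds}(2), which gives $\phi^+(X)\le\max\{\alpha,\phi^+(X/Y_1)\}$, and hence $\phi^+(X)>\alpha$. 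That last step misreads the direction of the inequality, so the case needs a different argument. The correct argument: since $\phi^-(X/Y_1)>\alpha=\phi^-(X)$ would have to be realized by the minimal semistable quotient of $X$, and the minimal semistable quotient of $X$ is $X$ itself, every nonzero quotient of $X$ has $\phi^-\ge\alpha$. I would take a semistable quotient $Q$ of phase $\beta>\alpha$ and compare its phase with the phase of $X$.
\end{itemize}

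I expect this last case to be the genuine difficulty. I would first try to resolve it by showing that $\phi^+(Q)\leq\phi^+(X)=\alpha$ for any quotient $Q$ of a semistable $X$. This is dual to part (1) of Lemma~\ref{lem:HN-phase-bounds}, and it follows because $\hom(X,Q_1')=0$ for the top HN factor $Q_1'$ of $Q$ when its phase exceeds $\alpha$.
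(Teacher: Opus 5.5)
Your reduction to excluding $\phi^+(Y)=\alpha$, your choice of $Y_1\in\Pi_\alpha$, and your $\beta=\alpha$ case are all fine. However, the argument does not close: the case where the last HN factor $Q$ of $X/Y_1$ has phase $\beta>\alpha$ is left open, and the fix you propose for it is false. It is not true that every quotient of a semistable object of phase $\alpha$ has $\phi^+\leq\alpha$. Lemma~\ref{lem:HN-phase-bounds}(1) only gives $\phi^-\geq\alpha$ for quotients, and part~(2) of the very statement you are proving says that proper quotients of $X$ have all their HN phases strictly above $\alpha$. Your justification via $\hom(X,Q_1')=0$ also runs Hom-vanishing in the wrong direction. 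The axiom kills morphisms from higher to lower phase, not from $\Pi_\alpha$ to $\Pi_\beta$ with $\beta>\alpha$. Moreover, $Q_1'$ is a subobject of $Q$, so there is no natural map from $X$ to it anyway.

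The missing observation is that the detour through $X/Y_1$ is unnecessary. The inclusion $\iota:Y_1\to X$ is a monomorphism in $\mathcal H$. A monomorphism in $\mathcal H$ between objects of the full subcategory $\Pi_\alpha$ is also a monomorphism in $\Pi_\alpha$, since left cancellation is tested against fewer morphisms. So your worry that kernels in $\Pi_\alpha$ and in $\mathcal H$ might differ does not matter here. You have already shown that $\iota$ is an epimorphism in $\Pi_\alpha$. In the abelian category $\Pi_\alpha$, a morphism that is both monic and epic is an isomorphism, and an isomorphism in $\Pi_\alpha$ is an isomorphism in $\mathcal H$. Hence $Y_1=X$, contradicting $Y_1\subseteq Y\subsetneq X$. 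This is exactly the paper's argument, phrased there as ``a nonzero monomorphism into a simple object of the abelian category $\Pi_\alpha$ is an isomorphism.'' With this step in place, part~(2) follows by duality as you intended.
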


\begin{proof}
  We prove (\ref{item:simple-slice-subobjects}); (\ref{item:simple-slice-quotients}) follows by passing to $\mathcal H^{\mathrm{op}}$ and reversing the phase order. Let $0\ne U\subsetneq X$ be a subobject in $\mathcal H$. By Lemma~\ref{lem:HN-phase-bounds}(1), $\phi^+(U)\leq\phi(X)=\alpha$. If equality holds, the maximal semistable subobject $U_1$ of $U$ in $\mathcal H$ belongs to $\Pi_\alpha$. The composite $U_1\to U\to X$ is then a nonzero monomorphism in $\Pi_\alpha$, hence an isomorphism by simplicity of $X$ in $\Pi_\alpha$. This forces $U=X$, a contradiction. Thus $\phi^+(U)<\alpha$.
\end{proof}

The next lemma shows that a stability function induces the given stability data if it has one value on each semistable subcategory and these values respect the phase order.

\begin{lemma}\label{lem:HN-recognition}
  Let $(\Phi,\{\Pi_\varphi\}_{\varphi\in\Phi})$ be a stability data on an abelian category $\mathcal H$, and let $\mu:\operatorname{Obj}^*(\mathcal H)\to P$ be a stability function, where $(P,\leq_P)$ is a totally ordered set. Suppose there are $\lambda_\varphi\in P$ for $\varphi\in\Phi$ such that $\mu(X)=\lambda_\varphi$ for every $0\ne X\in\Pi_\varphi$, and $\lambda_\varphi<\lambda_\psi$ whenever $\varphi<\psi$. Then $\mu$ induces $(\Phi,\{\Pi_\varphi\}_{\varphi\in\Phi})$.
\end{lemma}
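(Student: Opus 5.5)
We must show that $\Phi_\mu$ corresponds to $\Phi$ via $\lambda:\varphi\mapsto\lambda_\varphi$, and that $\Pi_\varphi=\Pi^\mu_{\lambda_\varphi}$. The plan is to reduce everything to the HN filtration of the stability data: first show that every nonzero $X\in\Pi_\varphi$ is $\mu$-semistable, and then show that no other objects are $\mu$-semistable with value $\lambda_\varphi$, and no object is $\mu$-semistable with a value outside $\{\lambda_\varphi\}$.

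\emph{Step 1: a see-saw bound along filtrations.} For a nonzero $M$ with HN filtration \eqref{eq:preliminary-HN-filtration} and phases $\psi_1>\cdots>\psi_r$, I claim
\[
\lambda_{\psi_r}\le_P\mu(N)\le_P\lambda_{\psi_1}
\]
for every nonzero subquotient $N$ of any factor-filtered object. More precisely, I will use the elementary consequence of the see-saw property: if $0\to A\to B\to C\to0$ has nonzero terms, then $\mu(B)$ lies between $\mu(A)$ and $\mu(C)$. By induction on $r$, this gives $\min_j\mu(M_j/M_{j-1})\le\mu(M)\le\max_j\mu(M_j/M_{j-1})$, so $\lambda_{\psi_r}\le\mu(M)\le\lambda_{\psi_1}$, that is, $\lambda_{\phi^-(M)}\le\mu(M)\le\lambda_{\phi^+(M)}$ for every nonzero $M$.

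\emph{Step 2: objects of $\Pi_\varphi$ are $\mu$-semistable.} Let $0\ne X\in\Pi_\varphi$ and $0\ne U\subseteq X$. By Lemma~\ref{lem:HN-phase-bounds}(1), $\phi^+(U)\le\varphi$, so Step~1 gives $\mu(U)\le\lambda_{\phi^+(U)}\le\lambda_\varphi=\mu(X)$. Hence $X\in\Pi^\mu_{\lambda_\varphi}$. Every $\lambda_\varphi$ is therefore in $\Phi_\mu$, since $\Pi_\varphi\neq0$.

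\emph{Step 3: converse.} Let $M$ be $\mu$-semistable with $\mu(M)=p$. Take its HN filtration. Then $M_1\subseteq M$ gives $\lambda_{\psi_1}=\mu(M_1)\le p$. Dually, for the quotient $M/M_{r-1}$, the see-saw applied to $0\to M_{r-1}\to M\to M/M_{r-1}\to 0$ (when $r>1$) gives $\mu(M_{r-1})\le\mu(M)$, hence $\mu(M/M_{r-1})\ge\mu(M)$, i.e. $\lambda_{\psi_r}\ge p$. Combined with $\lambda_{\psi_r}<\lambda_{\psi_1}$ when $r>1$, this yields a contradiction, so $r=1$. Thus $M\in\Pi_{\psi_1}$ and $p=\lambda_{\psi_1}$. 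Consequently $\Phi_\mu=\{\lambda_\varphi\}$, and $\lambda$ is an order isomorphism because it is strictly increasing. Moreover, $\Pi^\mu_{\lambda_\varphi}\subseteq\Pi_\varphi$, using injectivity of $\lambda$, and Step~2 gives the reverse inclusion.

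The main obstacle is Step~1, specifically getting the betweenness inequality cleanly from the trichotomy \eqref{eq:strict-see-saw-convention}, including the case where some term is zero. That case is trivial, and the rest is bookkeeping.
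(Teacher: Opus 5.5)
Your proof is correct and is essentially the paper's argument: the betweenness bound $\lambda_{\phi^-(M)}\le\mu(M)\le\lambda_{\phi^+(M)}$ along HN filtrations, combined with Lemma~\ref{lem:HN-phase-bounds}(1), gives $\mu$-semistability of the objects of $\Pi_\varphi$. Your opening phrase about arbitrary subquotients is overstated, but only the precise version is used. For the converse, you squeeze $\mu(M)$ between $\mu(M_1)$ and $\mu(M/M_{r-1})$ using semistability twice, whereas the paper uses the strict form of the bound to get $\mu(M_1)>\mu(M)$ directly when $r>1$. Both work, and you also make explicit that $\lambda$ is an order isomorphism onto $\Phi_\mu$.
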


\begin{proof}
  We first compare the value of an object with those of its HN factors.
  \begin{claim}
    If $M$ has HN factors $A_1,\ldots,A_s$ in decreasing phases, then
    \begin{equation}\label{eq:HN-value-bounds}
      \mu(A_s)\leq\mu(M)\leq\mu(A_1),
    \end{equation}
    with both inequalities strict when $s>1$.
  \end{claim}

  \begin{proof}[Proof of the claim]
    The first HN factor $A_1\subseteq M$ is its maximal semistable subobject in $\mathcal H$. Induction on $s$, using $0\to A_1\to M\to M/A_1\to0$ and the see-saw property, gives \eqref{eq:HN-value-bounds} and the asserted strictness.
  \end{proof}

  For $0\ne U\subsetneq X\in\Pi_\varphi$, Lemma~\ref{lem:HN-phase-bounds}(1) gives $\phi^+(U)\leq\varphi$. Applying the claim to $U$ therefore gives $\mu(U)\leq\mu(X)$, so $X$ is $\mu$-semistable. If $M$ has more than one HN factor, the claim shows that its first HN factor $A_1$ satisfies $\mu(A_1)>\mu(M)$; hence $M$ is not $\mu$-semistable.
\end{proof}

\subsection{\texorpdfstring{\revision{Maximal green sequences}}{Maximal green sequences}}

\begingroup
Recall that a torsion class is a full subcategory closed under quotients and extensions. A \emph{maximal green sequence} is a finite chain of torsion classes
\begin{equation}\label{eq:mgs-definition}
  0=\cT_0\subsetneq\cT_1\subsetneq\cdots\subsetneq\cT_m=\cA
\end{equation}
in which every inclusion is a cover, meaning that no torsion class lies strictly between $\cT_{i-1}$ and $\cT_i$. Let $\mu:\operatorname{Obj}^*(\cA)\to P$ be a stability function, with $P$ totally ordered. The \emph{$\mu$-HN factors} of a nonzero object $M$ are the successive quotients in its HN filtration; they are $\mu$-semistable and have strictly decreasing $\mu$-values. For each $p\in P$, put
\begin{equation}\label{eq:mgs-function-torsion}
  \cT^\mu_{\geq p}
  =\{0\}\cup\{M\ne0:\text{every $\mu$-HN factor of $M$ has value at least $p$}\}.
\end{equation}
\revision{The subcategories $\cT^\mu_{\geq p}$ are torsion classes} \cite[Propositions~2.17--2.18]{BST}, and $p\leq q$ implies $\cT^\mu_{\geq q}\subseteq\cT^\mu_{\geq p}$. When discussing an induced complete chain, we may enlarge $P$ by two elements $p_-,p_+$ with $p_-<p<p_+$ for every $p\in P$, neither attained by $\mu$. Then $\cT^\mu_{\geq p_-}=\cA$ and $\cT^\mu_{\geq p_+}=0$. This leaves all values and comparisons of $\mu$ unchanged. We say that $\mu$ \emph{induces} \eqref{eq:mgs-definition} if the distinct torsion classes obtained in this way are precisely $\cT_0,\ldots,\cT_m$.

\par\endgroup

\section{Criteria for inducing stability data}\label{sec:induction-criteria}

We first characterize when a stability data is induced by a stability function using admissible signings. \red{We then give a criterion in $K_0(\cA)_\bQ$ for an arbitrary phase set $\Phi$.}

\subsection{Induction via admissible signings}

Fix a stability data $(\Phi,\{\Pi_\varphi\}_{\varphi\in\Phi})$ on $\cA$. \review{Wideness of the semistable subcategories is necessary for a stability function to induce $(\Phi,\{\Pi_\varphi\}_{\varphi\in\Phi})$.}

\begin{proposition}
  \cite[Proposition 2.20]{BST}\label{prop:wideness of stability data} If a stability data $(\Phi,\{\Pi_\varphi\}_{\varphi\in\Phi})$ on $\cA$ is induced by a stability function $\mu:\operatorname{Obj}^*(\cA)\to P$ for a totally ordered set $(P,\leq_P)$, then each $\Pi_\varphi$ is a wide subcategory of $\cA$.
\end{proposition}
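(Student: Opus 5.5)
We must show that each $\Pi_\varphi$ is closed under extensions, kernels and cokernels. Fix an order isomorphism $\lambda:\Phi\to\Phi_\mu$ with $\Pi_\varphi=\Pi^\mu_{\lambda(\varphi)}$, and put $p=\lambda(\varphi)$. Then $\Pi_\varphi$ is exactly $\{0\}$ together with the $\mu$-semistable objects of value $p$. Extension-closure is already part of Definition~\ref{def:stability-data}, but it also follows directly. In an exact sequence $0\to A\to B\to C\to0$ with $A,C\in\Pi^\mu_p$ nonzero, the see-saw property gives $\mu(B)=p$. For a subobject $U\subseteq B$, the sequence $0\to U\cap A\to U\to U/(U\cap A)\to0$ has outer terms that are subobjects of $A$ and of $C$. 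Hence their values are at most $p$, and the see-saw property gives $\mu(U)\le p$ whenever both are nonzero; the degenerate cases are immediate.

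The main step is closure under kernels and cokernels. Let $f:X\to Y$ be a nonzero morphism with $X,Y\in\Pi^\mu_p$ nonzero (if $f=0$ the kernel is $X$ and the cokernel is $Y$). Put $I=\im f$ and $K=\ker f$, and suppose first that $K\neq0$.

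1. Since $I$ is a quotient of the semistable object $X$, the see-saw property applied to $0\to K\to X\to I\to0$ together with $\mu(K)\le p$ gives $\mu(I)\ge p$.
2. Since $I$ is a subobject of $Y$, semistability of $Y$ gives $\mu(I)\le p$. Hence $\mu(I)=p$, and then $\mu(K)=p$ by the see-saw property (if $K=0$, then $I\cong X$).
3. Every subobject of $K$ or of $I$ is a subobject of $X$ or of $Y$, so $K$ and $I$ are $\mu$-semistable of value $p$.
4. Applying the see-saw property to $0\to I\to Y\to\coker f\to0$ gives $\mu(\coker f)=p$ when $\coker f\neq0$.

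It remains to check that $\coker f$ is semistable, which is the expected obstacle. Take a nonzero subobject $V\subseteq \coker f$ and its preimage $\tilde V\subseteq Y$. This gives an exact sequence $0\to I\to\tilde V\to V\to0$. Here $\mu(\tilde V)\le p=\mu(I)$, and the see-saw property then forces $\mu(V)\le\mu(\tilde V)\le p$. This covers both the equality case and the case where the outer values are strictly decreasing. Hence $\coker f\in\Pi^\mu_p$, so $\Pi_\varphi$ is wide.

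Alternatively, one can simply cite \cite[Proposition~2.20]{BST}. The argument above only makes explicit that the semistable objects of a fixed value form an abelian subcategory.
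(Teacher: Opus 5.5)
Your argument is correct. The paper gives no proof of its own here; it imports the statement from \cite[Proposition~2.20]{BST}. Your proposal is therefore a self-contained substitute for that citation, not a reconstruction of an argument in the text.

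Your route shows that only two ingredients are needed: the see-saw trichotomy and the identification $\Pi_\varphi=\Pi^\mu_{\lambda(\varphi)}$. You use no HN filtrations and no length hypothesis, so the same proof shows that $\Pi^\mu_p$ is wide for every stability function $\mu$ on any abelian category and every value $p$.

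It also explains the contrast with Lemma~\ref{lem:abelian-slices-wide}. For a bare stability data, Lemma~\ref{lem:HN-phase-bounds} puts $\im f$ in $\Pi_\varphi$ but allows $\ker f$ to have HN phases strictly below $\varphi$. That is why that lemma needs the extra hypothesis that $\Pi_\varphi$ is abelian. In your Steps~1--2, by contrast, the see-saw trichotomy forces $\mu(\ker f)=p$ once $\mu(\im f)=p$, and this is what rules out such lower-phase pieces.

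The citation buys brevity. Your version makes the necessity direction of Theorem~\ref{thm:small-phase-induction} independent of \cite{BST}.
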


Example~\ref{ex:d4-seven-phase-counterexample} shows that wideness of all semistable subcategories does not ensure the existence of an inducing stability function. To state a necessary and sufficient condition, let
\begin{equation}
  \review{\mathscr E=\left\{(A,B,C)\in\operatorname{Obj}^*(\cA)^3\;\middle|\;
    \begin{gathered}
      \text{there exists a short exact sequence}\\[-2pt]
      0\to A\to B\to C\to0
    \end{gathered}\right\}.}
\end{equation}
For a stability function $\mu$ and $(A,B,C)\in\mathscr E$, exactly one of
\begin{equation}
  \mu(A)<\mu(B)<\mu(C),\qquad \mu(A)=\mu(B)=\mu(C),\qquad \mu(A)>\mu(B)>\mu(C)
\end{equation}
holds. \revision{We denote the increasing, constant and decreasing cases by $+$, $0$ and $-$, respectively.}

\begin{definition}\label{def:signing}
  Let $\mathscr F\subseteq\mathscr E$. Write $V_{\mathscr F}$ for the set of isomorphism classes of nonzero objects occurring in the triples of $\mathscr F$, using the fixed skeleton to represent these classes by objects. A \textbf{signing} on $\mathscr F$ is a map
  \begin{equation}
    \varepsilon:\mathscr F\to\{+,0,-\}.
  \end{equation}
  Define an equivalence relation $\sim_\varepsilon$ on $V_{\mathscr F}$ generated by the following relations, for all $X,Y\in V_{\mathscr F}$ and all $(A,B,C)\in\mathscr F$:
  \begin{equation}
    \begin{aligned}
      X\sim_\varepsilon Y &\quad\text{if }X,Y\in\Pi_\varphi\text{ for some }\varphi\in\Phi,\\
      A\sim_\varepsilon B\sim_\varepsilon C &\quad\text{if }\varepsilon(A,B,C)=0.
    \end{aligned}
  \end{equation}
  Let $Q_{\mathscr F,\varepsilon}$ be the quiver with vertex set $V_{\mathscr F}/\sim_\varepsilon$. Write $\revision{\overline X}$ for the $\sim_\varepsilon$-class of $X$. For $X,Y\in V_{\mathscr F}$, if $X\in\Pi_\varphi$ and $Y\in\Pi_\psi$ for phases $\varphi<\psi$ in $\Phi$, we add an arrow $\revision{\overline X}\to\revision{\overline Y}$. For $(A,B,C)\in\mathscr F$, we add
  \begin{equation}
    \revision{\overline A}\to\revision{\overline B}\to\revision{\overline C}\quad\text{if }\varepsilon(A,B,C)=+, \qquad \revision{\overline C}\to\revision{\overline B}\to\revision{\overline A}\quad\text{if }\varepsilon(A,B,C)=-.
  \end{equation}
  The signing $\varepsilon$ is called \textbf{admissible on $\mathscr F$} if $Q_{\mathscr F,\varepsilon}$ contains no directed cycle, where a loop is regarded as a cycle. When $\mathscr F=\mathscr E$, we simply call $\varepsilon$ an \textbf{admissible signing} and write $Q_{\mathscr E,\varepsilon}$ as $Q_\varepsilon$.
\end{definition}

\begin{definition}
  Let $(P,\leq_P)$ and $(P',\leq_{P'})$ be totally ordered sets, and let $\mu:\operatorname{Obj}^*(\cA)\to P$ and $\mu':\operatorname{Obj}^*(\cA)\to P'$ be stability functions. We say that $\mu$ and $\mu'$ are \emph{order-equivalent} if, for all nonzero objects $X,Y\in\cA$,
  \begin{equation}
    \begin{aligned}
      \mu(X)<\mu(Y)&\Longleftrightarrow\mu'(X)<\mu'(Y),\\
      \mu(X)=\mu(Y)&\Longleftrightarrow\mu'(X)=\mu'(Y).
    \end{aligned}
  \end{equation}
\end{definition}

The following proposition relates admissible signings to the equality and strict order comparisons defined by inducing stability functions.

\begin{proposition}\label{prop:stability-induction-criterion}
  Let $(\Phi,\{\Pi_\varphi\}_{\varphi\in\Phi})$ be a system of stability data on an abelian length category $\cA$. The following statements are equivalent:
  \begin{enumerate}
    \item \revision{there is a stability function into a totally ordered set inducing $(\Phi,\{\Pi_\varphi\}_{\varphi\in\Phi})$;}
    \item there exists an admissible signing on $\mathscr E$.
  \end{enumerate}
  Moreover, $\mu\mapsto\varepsilon_\mu$ induces a bijection between the order-equivalence classes of stability functions inducing $(\Phi,\{\Pi_\varphi\}_{\varphi\in\Phi})$ and the admissible signings on $\mathscr E$.
\end{proposition}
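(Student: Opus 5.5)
The plan is to prove both implications directly and then check that $\mu\mapsto\varepsilon_\mu$ is a bijection on order-equivalence classes.

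\textbf{(1)$\Rightarrow$(2).} Let $\mu$ induce the stability data via $\lambda$. Define $\varepsilon_\mu(A,B,C)$ to be $+$, $0$ or $-$ according to which case of \eqref{eq:strict-see-saw-convention} holds. I will show that $X\sim_{\varepsilon_\mu}Y$ implies $\mu(X)=\mu(Y)$. It suffices to check the generating relations. Objects of the same $\Pi_\varphi$ all have value $\lambda(\varphi)$. A triple with sign $0$ has three equal values. Hence $\overline X\mapsto\mu(X)$ is a well-defined map $V_{\mathscr E}/\sim\to P$. Every arrow $\overline X\to\overline Y$ of $Q_{\varepsilon_\mu}$ strictly increases this map: phase arrows do so because $\lambda$ is strictly increasing, and $\pm$-arrows do so by the definition of the sign. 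Therefore a directed cycle, including a loop, would give $p<p$. So $\varepsilon_\mu$ is admissible.

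\textbf{(2)$\Rightarrow$(1).} Let $\varepsilon$ be admissible. The vertex set $V=V_{\mathscr E}/\sim_\varepsilon$ is a set because of the skeleton. Let $\preceq$ be the reflexive–transitive closure of the arrow relation. Since $Q_\varepsilon$ has no directed cycles, $\preceq$ is a partial order, and $\overline X\to\overline Y$ implies $\overline X\prec\overline Y$. By Szpilrajn's extension theorem (Zorn's lemma), extend $\preceq$ to a total order $\leq_P$ on $P:=V$. Every nonzero object lies in some triple, for instance $(X,X\oplus X,X)$, so $\mu(X):=\overline X$ is defined on all of $\operatorname{Obj}^*(\cA)$ and is isomorphism-invariant.

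The see-saw property follows from the signs. If $\varepsilon(A,B,C)=0$, the three classes coincide. If $\varepsilon(A,B,C)=+$, the arrows give $\overline A<\overline B<\overline C$. If $\varepsilon(A,B,C)=-$, they give $\overline A>\overline B>\overline C$. Thus exactly one case of \eqref{eq:strict-see-saw-convention} holds, and $\mu$ is a stability function with $\varepsilon_\mu=\varepsilon$.

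To show that $\mu$ induces the data, set $\lambda_\varphi:=\overline X$ for any nonzero $X\in\Pi_\varphi$. This is independent of $X$ by the first generating relation. If $\varphi<\psi$, the phase arrows give $\lambda_\varphi<\lambda_\psi$. Lemma~\ref{lem:HN-recognition} then shows that $\mu$ induces $(\Phi,\{\Pi_\varphi\})$.

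\textbf{Bijection.} Surjectivity onto admissible signings is the construction above. For injectivity, suppose $\varepsilon_\mu=\varepsilon_{\mu'}$. I need to show that $\mu$ and $\mu'$ are order-equivalent, and for this I must compare arbitrary pairs $X,Y$. This is the main obstacle, because an arbitrary pair need not occur together in a single triple. The plan is to recover the comparison of $X$ and $Y$ from signs alone, using the triple $(X,X\oplus Y,Y)$ given by the split sequence $0\to X\to X\oplus Y\to Y\to0$:
\begin{itemize}
\item if its sign is $0$, then $\mu(X)=\mu(Y)$;
\item if its sign is $+$, then $\mu(X)<\mu(Y)$;
\item if its sign is $-$, then $\mu(X)>\mu(Y)$.
\end{itemize}
The same holds for $\mu'$ with the same sign, so $\mu$ and $\mu'$ are order-equivalent. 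Conversely, order-equivalent functions clearly give the same signing, so the map is well defined on classes. This completes the bijection.

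The remaining points to verify are minor: that $\sim_\varepsilon$ lives on a set (handled by the skeleton), and that in the construction the sign of each triple agrees with the order even after passing to the linear extension. The latter holds because the extension preserves strict relations.
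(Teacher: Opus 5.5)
Your proof is correct and follows the paper's argument step for step: the same signing $\varepsilon_\mu$, the same construction $\mu(X)=\overline X$ followed by Lemma~\ref{lem:HN-recognition}, and the same split-sequence argument for injectivity. The only difference is in (2)$\Rightarrow$(1). You extend the path order to a total order by Szpilrajn's theorem, whereas the paper observes that the sign of $0\to X\to X\oplus Y\to Y\to0$ already makes the path order on $V_{\mathscr E}/\sim_\varepsilon$ total. This is the observation you use for injectivity, and it shows that no choice is needed and the ordered set is canonical.
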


\begin{proof}
  Assume first that a stability function $\mu$ as in (1) is given. Define
  \begin{equation}
    \varepsilon_\mu(A,B,C)=
    \begin{cases}
      +, & \mu(A)<\mu(B)<\mu(C),\\
      0, & \mu(A)=\mu(B)=\mu(C),\\
      -, & \mu(A)>\mu(B)>\mu(C).
    \end{cases}
  \end{equation}
  The strict see-saw property shows that $\varepsilon_\mu$ is well defined. Every relation generating $\sim_{\varepsilon_\mu}$ gives equality of $\mu$-values, while every arrow of $Q_{\varepsilon_\mu}$ gives a strict inequality. Hence a directed cycle would give $\mu(X)<\mu(X)$ for some nonzero object $X$, which is impossible. Thus $\varepsilon_\mu$ is admissible.

  Conversely, let $\varepsilon$ be an admissible signing and put $P_\varepsilon=V_{\mathscr E}/\sim_\varepsilon$. For $\revision{\overline X},\revision{\overline Y}\in P_\varepsilon$, define $\revision{\overline X}<_\varepsilon\revision{\overline Y}$ if $Q_\varepsilon$ contains a directed path of positive length from $\revision{\overline X}$ to $\revision{\overline Y}$. Since $Q_\varepsilon$ is acyclic, this is a strict partial order. It is in fact total. Indeed, for any nonzero objects $X,Y$, consider the split short exact sequence $0\to X\to X\oplus Y\to Y\to0$. According to whether its sign is $+$, $0$, or $-$, we obtain respectively $\revision{\overline X}<\revision{\overline Y}$, $\revision{\overline X}=\revision{\overline Y}$, or $\revision{\overline X}>\revision{\overline Y}$.

  Define $\mu_\varepsilon:\operatorname{Obj}^*(\cA)\to P_\varepsilon$ by $\mu_\varepsilon(X)=\revision{\overline X}$. By construction, $\mu_\varepsilon$ satisfies the strict see-saw property, is constant on each $\Pi_\varphi\setminus\{0\}$, and satisfies $\mu_\varepsilon(X)<\mu_\varepsilon(Y)$ whenever $0\ne X\in\Pi_\varphi$, $0\ne Y\in\Pi_\psi$, and $\varphi<\psi$. Lemma~\ref{lem:HN-recognition} therefore shows that it induces $(\Phi,\{\Pi_\varphi\}_{\varphi\in\Phi})$.

  Finally, $\mu\mapsto\varepsilon_\mu$ depends only on the order-equivalence class of $\mu$, and the construction gives $\varepsilon_{\mu_\varepsilon}=\varepsilon$, proving surjectivity. If $\varepsilon_\mu=\varepsilon_{\mu'}$, then the sign of the split sequence $0\to X\to X\oplus Y\to Y\to0$ gives the same comparison between $\mu(X)$ and $\mu(Y)$ as between $\mu'(X)$ and $\mu'(Y)$ for every pair $X,Y$. Hence $\mu$ and $\mu'$ are order-equivalent, proving injectivity.
\end{proof}

The signing criterion can be checked on finite collections of short exact sequences.

\begin{proposition}\label{prop:reduction to finite case}
  Let $(\Phi,\{\Pi_\varphi\}_{\varphi\in\Phi})$ be a system of stability data on $\cA$. Suppose that every finite subset $\mathscr F\subseteq\mathscr E$ admits an admissible signing. Then $\mathscr E$ itself admits an admissible signing. In particular, there exist a totally ordered set $(P,\leq_P)$ and a stability function $\mu:\operatorname{Obj}^*(\cA)\to P$ inducing $(\Phi,\{\Pi_\varphi\}_{\varphi\in\Phi})$.
\end{proposition}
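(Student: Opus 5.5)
This is a compactness argument. I would use the compactness theorem of propositional logic, or equivalently Tychonoff's theorem applied to the product space $\{+,0,-\}^{\mathscr E}$. Using the fixed small skeleton, $\mathscr E$ can be treated as a set, namely triples of skeleton objects admitting a short exact sequence. The space $X=\{+,0,-\}^{\mathscr E}$, with the discrete topology on each factor, is compact. For each finite $\mathscr F\subseteq\mathscr E$, let $C_{\mathscr F}\subseteq X$ be the set of signings $\varepsilon$ whose restriction $\varepsilon|_{\mathscr F}$ is admissible on $\mathscr F$. Membership depends only on the finitely many coordinates in $\mathscr F$, so $C_{\mathscr F}$ is clopen, and it is nonempty by hypothesis.

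The first key step is monotonicity: if $\mathscr F\subseteq\mathscr F'$ and $\varepsilon$ is admissible on $\mathscr F'$, then $\varepsilon|_{\mathscr F}$ is admissible on $\mathscr F$. To see this, compare the quivers. Every generating relation of $\sim_{\varepsilon|\mathscr F}$ on $V_{\mathscr F}$ is also a generating relation of $\sim_\varepsilon$ on $V_{\mathscr F'}$. So the inclusion $V_{\mathscr F}\subseteq V_{\mathscr F'}$ induces a map of vertex sets $Q_{\mathscr F,\varepsilon|\mathscr F}\to Q_{\mathscr F',\varepsilon}$, and every arrow is sent to an arrow. Phase arrows and triple arrows are both inherited. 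A directed cycle, including a loop, therefore maps to a directed cycle, so acyclicity descends. Consequently $C_{\mathscr F_1}\cap\cdots\cap C_{\mathscr F_k}\supseteq C_{\mathscr F_1\cup\cdots\cup\mathscr F_k}\neq\emptyset$. The family $\{C_{\mathscr F}\}$ thus has the finite intersection property, and compactness yields some $\varepsilon\in\bigcap_{\mathscr F}C_{\mathscr F}$.

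The second key step, which I expect to be the only real point, is to show that this $\varepsilon$ is admissible on all of $\mathscr E$. Suppose $Q_\varepsilon$ has a directed cycle. The cycle involves finitely many arrows $\overline{X_1}\to\overline{X_2}\to\cdots\to\overline{X_1}$. Each arrow has a witness: either a pair of objects in phases $\varphi<\psi$, or a triple in $\mathscr E$ with nonzero sign. Consecutive arrows meet in a common $\sim_\varepsilon$-class, but the head of one witness and the tail of the next may be different objects. Each such identification is established by a finite chain of generating relations, and each relation is either common membership in some $\Pi_\varphi$ or a triple with sign $0$.

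Now collect into a finite set $\mathscr F$ all triples used as witnesses, whether for arrows or for $0$-relations. To make $V_{\mathscr F}$ contain the objects appearing in phase arrows and phase equivalences, also add for each such object $Y$ the split triple $(Y,Y\oplus Y,Y)$; this is harmless. Then $\sim_{\varepsilon|\mathscr F}$ already identifies the relevant objects, and all arrows of the cycle are present in $Q_{\mathscr F,\varepsilon|\mathscr F}$. This gives a directed cycle there, contradicting $\varepsilon\in C_{\mathscr F}$. Hence $\varepsilon$ is an admissible signing on $\mathscr E$, and Proposition~\ref{prop:stability-induction-criterion} produces the totally ordered set $P_\varepsilon$ and the inducing stability function $\mu_\varepsilon$.
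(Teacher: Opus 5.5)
Your proposal is correct and follows the paper's proof. Both apply Tychonoff to $\{+,0,-\}^{\mathscr E}$, use the clopen sets $C_{\mathscr F}$ with the finite intersection property, and pad a finite witness set with split triples $(Y,Y\oplus Y,Y)$ to rule out a cycle in $Q_\varepsilon$. Your explicit check that admissibility on $\mathscr F'\supseteq\mathscr F$ restricts to admissibility on $\mathscr F$ is exactly what the paper uses, without stating it, in the inclusion $C_{\mathscr G}\subseteq\bigcap_j C_{\mathscr F_j}$.
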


\begin{proof}
  Let
  \begin{equation}
    \Omega=\{+,0,-\}^{\mathscr E}
  \end{equation}
  be the space of all signings on $\mathscr E$, equipped with the product topology of discrete spaces. By Tychonoff's theorem, $\Omega$ is compact. For every finite $\mathscr F\subseteq\mathscr E$, set
  \begin{equation}
    C_{\mathscr F} = \{\varepsilon\in\Omega\mid \varepsilon|_{\mathscr F}\text{ is admissible on }\mathscr F\}.
  \end{equation}
  Each $C_{\mathscr F}$ is clopen, since membership depends only on the finitely many coordinates indexed by $\mathscr F$.

  For finite subsets $\mathscr F_1,\ldots,\mathscr F_r\subseteq\mathscr E$, put $\mathscr G=\bigcup_{j=1}^r\mathscr F_j$. By hypothesis, $\mathscr G$ admits an admissible signing; extending it arbitrarily to $\mathscr E$ shows that $\varnothing\ne C_{\mathscr G}\subseteq\bigcap_{j=1}^rC_{\mathscr F_j}$. Thus the family $\{C_{\mathscr F}\mid\mathscr F\subseteq\mathscr E\text{ finite}\}$ has the finite intersection property. By compactness,
  \begin{equation}
    \bigcap_{\substack{\mathscr F\subseteq\mathscr E\\ \mathscr F\text{ finite}}} C_{\mathscr F}\neq\varnothing.
  \end{equation}
  Choose $\varepsilon$ in this intersection.

  We claim that $\varepsilon$ is admissible. Otherwise, $Q_\varepsilon$ contains a cycle. This cycle involves only finitely many arrows, and its vertex identifications are generated by finitely many same-phase relations and finitely many triples signed $0$. Choose a finite subset $\mathscr F\subseteq\mathscr E$ containing all triples required for these arrows and identifications. If an object in the cycle does not belong to $V_{\mathscr F}$, add the triple $(X,X\oplus X,X)$, corresponding to the split exact sequence $0\to X\to X\oplus X\to X\to0$, to $\mathscr F$. The same cycle then occurs in $Q_{\mathscr F,\varepsilon|_{\mathscr F}}$, contradicting $\varepsilon\in C_{\mathscr F}$.

  Thus $\varepsilon$ is admissible. The final assertion follows from Proposition~\ref{prop:stability-induction-criterion}.
\end{proof}

\begin{remark}
  In Proposition~\ref{prop:reduction to finite case}, the signings $\varepsilon_{\mathscr F}$ chosen on finite subsets are not assumed to satisfy $\varepsilon_{\mathscr G}|_{\mathscr F} =\varepsilon_{\mathscr F}$ when $\mathscr F\subseteq\mathscr G$.
\end{remark}

\subsection{A criterion for stability data via Grothendieck group}\label{subsec:positive-class-criterion}

To pass from exact sequences to Grothendieck classes, we first show that a stability function on a length category depends only on composition multiplicities.

\begin{lemma}\label{lem:stability-splitting-neutrality}
  Let $\cA$ be an abelian category, let $(P,\leq_P)$ be a totally ordered set, and let $\mu:\operatorname{Obj}^*(\cA)\to P$ be a stability function. For every short exact sequence $0\to A\to E\to C\to0$ with $E\ne0$, one has $\mu(E)=\mu(A\oplus C)$. If $\cA$ is a length category, then $\mu(M)=\mu(N)$ for all $M,N\in\operatorname{Obj}^*(\cA)$ with $m_S(M)=m_S(N)$ for every $S\in\operatorname{Sim}\cA$.
\end{lemma}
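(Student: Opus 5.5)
The first assertion is a comparison of $\mu(E)$ with $\mu(A\oplus C)$ through the see-saw property alone. I will dispose of the degenerate cases first: if $A=0$ then $E\cong C\cong A\oplus C$, and if $C=0$ then $E\cong A\cong A\oplus C$; since $\mu$ is isomorphism-invariant, $\mu(E)=\mu(A\oplus C)$ in both cases. So assume $A,C\neq0$.

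Both $0\to A\to E\to C\to0$ and the split sequence $0\to A\to A\oplus C\to C\to0$ have nonzero terms. The see-saw property applied to each shows that the position of the middle term is forced by the comparison of $\mu(A)$ with $\mu(C)$:
\begin{itemize}
\item if $\mu(A)<\mu(C)$, neither the constant nor the decreasing alternative can hold, so both sequences are increasing;
\item if $\mu(A)=\mu(C)$, only the constant alternative is possible, so $\mu(E)=\mu(A)=\mu(A\oplus C)$;
\item if $\mu(A)>\mu(C)$, both sequences are decreasing.
\end{itemize}
In the increasing case, $\mu(A)<\mu(E)<\mu(C)$ and $\mu(A)<\mu(A\oplus C)<\mu(C)$ hold simultaneously. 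This alone does not give equality, so I need a further comparison of $E$ with $A\oplus C$.

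For that comparison I would use auxiliary exact sequences that contain both $E$ and $A\oplus C$. The sequence
\[
0\to A\to E\oplus C\to C\oplus C\to0,
\]
obtained as the direct sum of the given sequence with $0\to0\to C\to C\to0$, is one candidate. The cleanest approach, I think, is to view $A\oplus C$ and $E$ as middle terms of exact sequences with the same outer terms and to compare each with $E\oplus(A\oplus C)$:
\[
0\to E\to E\oplus(A\oplus C)\to A\oplus C\to0 .
\]
Suppose $\mu(E)<\mu(A\oplus C)$. The sequence just displayed is then increasing, so $\mu(E)<\mu(E\oplus A\oplus C)<\mu(A\oplus C)$. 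On the other hand, $E\oplus A\oplus C$ is also the middle term of
\[
0\to A\oplus A\to E\oplus(A\oplus C)\to C\oplus C\to0,
\]
and the same holds for $E\oplus E$ and for $(A\oplus C)\oplus(A\oplus C)$, each with outer terms $A\oplus A$ and $C\oplus C$. I would then combine these with $\mu(X\oplus X)=\mu(X)$, which follows from the split sequence $0\to X\to X\oplus X\to X\to0$ and the see-saw property, and use the sequence $0\to A\to E\to C\to0$ in a pullback/pushout along $E\to C$ to derive a contradiction. I expect this tie-breaking step to be the main obstacle. If the direct approach fails, I would replace it by induction on length when $\cA$ is a length category. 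In that case the first assertion is only needed there, and the second assertion can be proved by inducting on $\ell(E)$ and refining $A$ through a composition series.

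For the second assertion, assume $\cA$ is a length category and induct on length to show $\mu(M)=\mu\bigl(\bigoplus_S S^{m_S(M)}\bigr)$. Take a simple subobject $S_1\subseteq M$ with quotient $M'=M/S_1$, so that $0\to S_1\to M\to M'\to0$ is exact. The first assertion gives $\mu(M)=\mu(S_1\oplus M')$. By the inductive hypothesis, $\mu(M')=\mu(G')$, where $G'$ is the semisimple object with the composition factors of $M'$.

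It remains to pass from $S_1\oplus M'$ to $S_1\oplus G'$, for which I need that $\mu(X)=\mu(X')$ implies $\mu(S\oplus X)=\mu(S\oplus X')$. I would prove this by comparing through the split sequences with outer terms $S$ and $X$ and with outer terms $S$ and $X'$. The see-saw property places both middle values in the same trichotomy relative to $\mu(S)$ and $\mu(X)=\mu(X')$. Equality again requires the tie-breaking argument above, this time applied inside a common extension $S\oplus X\oplus X'$.

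Finally, $M$ and $N$ with equal multiplicities have isomorphic semisimple objects $G$, so $\mu(M)=\mu(G)=\mu(N)$.
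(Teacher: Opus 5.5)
Your proposal has a genuine gap exactly where you flag it: the tie-breaking step for $\mu(E)=\mu(A\oplus C)$ is never carried out.

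The ingredients you list do not produce a contradiction from $\mu(E)<\mu(A\oplus C)$. These are the split sequence $0\to E\to E\oplus A\oplus C\to A\oplus C\to0$, the sequences with outer terms $A\oplus A$ and $C\oplus C$, and $\mu(X\oplus X)=\mu(X)$. They only place all the relevant values strictly between $\mu(A)$ and $\mu(C)$. The appeal to a ``pullback/pushout along $E\to C$'' is not an argument.

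The missing idea is an exact sequence with middle term $E\oplus A\oplus C$ whose two outer terms are isomorphic:
\[
0\to A\oplus C\xrightarrow{\left(\begin{smallmatrix}i&0\\0&0\\0&\id_C\end{smallmatrix}\right)}E\oplus A\oplus C\xrightarrow{\left(\begin{smallmatrix}p&0&0\\0&\id_A&0\end{smallmatrix}\right)}C\oplus A\to0 .
\]
Since $\mu(A\oplus C)=\mu(C\oplus A)$, strict see-saw forces the constant case, so $\mu(E\oplus A\oplus C)=\mu(A\oplus C)$. In your split sequence the middle term then has the same value as the right-hand term. That again forces the constant case and gives $\mu(E)=\mu(A\oplus C)$.

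Your fallback of inducting on length cannot replace this. The first assertion is claimed for arbitrary abelian categories. Even in a length category, a nonsplit $0\to S\to E\to S'\to0$ with $S,S'$ simple presents the same tie, with nothing smaller to induct on.

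The second part also needs repair. Your auxiliary claim that $\mu(X)=\mu(X')$ implies $\mu(S\oplus X)=\mu(S\oplus X')$ is false in general. Take a semisimple category with two simples $S_1,S_2$ (e.g.\ $\mod(K\times K)$) and the slope $\mu(M)=m_{S_2}(M)/\ell(M)$. Put $X=S_1\oplus S_2$, $X'=X\oplus X$ and $S=S_1$. Then $\mu(X)=\mu(X')=1/2$, but $\mu(S\oplus X)=1/3\ne2/5=\mu(S\oplus X')$.

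So no tie-breaking argument can prove that claim, and the inductive hypothesis $\mu(M')=\mu(G')$ is too weak to pass from $S_1\oplus M'$ to $S_1\oplus G'$. The fix is to carry an arbitrary extra summand $R$. Apply the first assertion to $0\to A\oplus R\xrightarrow{i\oplus\id_R}E\oplus R\xrightarrow{(p,0)}C\to0$ to get $\mu(E\oplus R)=\mu(A\oplus C\oplus R)$. Applying this successively along a composition series of $M$ yields $\mu(M)=\mu\bigl(\bigoplus_S S^{\oplus m_S(M)}\bigr)$, which is what the paper does.
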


\begin{proof}
  If $A=0$ or $C=0$, then $E\simeq A\oplus C$. Assume $A,C\ne0$ and write the maps as $i:A\to E$ and $p:E\to C$. There are short exact sequences
  \begin{equation}\label{eq:stability-neutralizing-rows}
    \begin{aligned}
      0&\longrightarrow A\oplus C
      \xrightarrow{\left(\begin{smallmatrix}i&0\\0&0\\0&\id_C\end{smallmatrix}\right)} E\oplus A\oplus C
      \xrightarrow{\left(\begin{smallmatrix}p&0&0\\0&\id_A&0\end{smallmatrix}\right)} C\oplus A
      \longrightarrow0,\\
      0&\longrightarrow E\longrightarrow E\oplus A\oplus C \longrightarrow A\oplus C\longrightarrow0.
    \end{aligned}
  \end{equation}
  The first sequence in \eqref{eq:stability-neutralizing-rows} has isomorphic endpoints, so strict see-saw gives $\mu(E\oplus A\oplus C)=\mu(A\oplus C)$. Applying see-saw to the second sequence in \eqref{eq:stability-neutralizing-rows} forces $\mu(E)=\mu(A\oplus C)$. For an additional object $R$, apply the same argument to $0\to A\oplus R\to E\oplus R\to C\to0$, where the first map is $i\oplus\id_R$ and the second is $(p,0)$. This gives $\mu(E\oplus R)=\mu(A\oplus C\oplus R)$. Applying this equality successively to a composition series of $M\ne0$ gives $\mu(M)=\mu\bigl(\bigoplus_{S\in\operatorname{Sim}\cA} S^{\oplus m_S(M)}\bigr)$. This proves the last assertion.
\end{proof}

For the rest of this section, fix a stability data $(\Phi,\{\Pi_\varphi\}_{\varphi\in\Phi})$ on $\cA$, \red{where $\Phi$ is an arbitrary nonempty totally ordered set}. Put
\begin{equation}\label{eq:phase-class-spaces}
  \begin{aligned}
    K_0(\cA)_\bQ&=\bigoplus_{S\in\operatorname{Sim}\cA}\bQ[S], &K_0(\cA)_{\bQ_{\geq0}}&=\bigoplus_{S\in\operatorname{Sim}\cA}\bQ_{\geq0}[S],\\
    \red{H_\varphi}&=\red{\operatorname{span}_\bQ\{[E]\mid E\in\Pi_\varphi\}\quad(\varphi\in\Phi).}
  \end{aligned}
\end{equation}
\red{For $\mathbf x=\sum_S x_S[S]\in K_0(\cA)_\bQ$, put $\operatorname{supp}(\mathbf x)=\{S\in\operatorname{Sim}\cA:x_S\ne0\}$; then $|\operatorname{supp}(\mathbf x)|<\infty$.}

The order on a totally ordered field $\bF$ satisfies $a<b\implies a+c<b+c$ and $a,b>0\implies ab>0$ for $a,b,c\in\bF$. In particular, $m\cdot1>0$ for every integer $m\geq1$, so $\operatorname{char}\bF=0$. We identify $\bQ$ with its image under the canonical order-preserving embedding $\bQ\hookrightarrow\bF$, and write $[a,b]_\bF=\{t\in\bF\mid a\leq t\leq b\}$ for $a\leq b$ in $\bF$. We use the same notation for a group homomorphism $K_0(\cA)\to\bF$ and its $\bQ$-linear extension to $K_0(\cA)_\bQ$.

We will construct stability functions as the unique zeros of families of additive maps. \revision{The following criterion supplies the see-saw property.}

\begin{proposition}\label{prop:red-path-criterion}
  \review{Adapted from the proof of \cite[Theorem~4.8]{BST}.} Let $\cA$ be an essentially small abelian length category, let $\bF$ be a totally ordered field, and let $I\subseteq\bF$ be an interval with the induced order. Let $F_t:K_0(\cA)\to\bF$ be group homomorphisms for $t\in I$. Suppose that, for every nonzero $M\in\cA$, there is $\mu(M)\in I$ such that, for all $t\in I$,
  \begin{equation}
    \begin{aligned}
      F_t(M)&>0&&\quad\text{if }t<\mu(M),\\
      F_t(M)&=0&&\quad\text{if }t=\mu(M),\\
      F_t(M)&<0&&\quad\text{if }t>\mu(M).
    \end{aligned}
  \end{equation}
  Then $\mu:\operatorname{Obj}^*(\cA)\to I$ is a stability function.
\end{proposition}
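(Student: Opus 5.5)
The plan is to check the see-saw property directly from additivity of each $F_t$, via a trichotomy argument. First, $\mu$ is well defined and isomorphism-invariant. The value $\mu(M)$ is the unique $t\in I$ with $F_t(M)=0$, since the sign conditions force $F_t(M)\neq0$ for every $t\neq\mu(M)$. Also $F_t(M)$ depends only on $[M]$. So $\mu(M)$ is determined by $[M]$, and in particular by the isomorphism class of $M$.

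Now take a short exact sequence $0\to A\to B\to C\to0$ with $A,B,C$ nonzero, and put $a=\mu(A)$, $b=\mu(B)$, $c=\mu(C)$. Since $[B]=[A]+[C]$ and each $F_t$ is a homomorphism, we have
\begin{equation*}
  F_t(B)=F_t(A)+F_t(C)\qquad(t\in I).
\end{equation*}
The key observation is that the sign of $F_t(M)$, as a function of $t$, is $+$ below $\mu(M)$, $0$ at $\mu(M)$ and $-$ above. I will use the fact that in an ordered field a sum of two elements with the same sign keeps that sign, and is zero only if both summands are zero.

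\textbf{Case $a=c$.} Evaluating at $t=a$ gives $F_a(B)=0$, so $b=a$ by uniqueness of the zero. This is the constant case $\mu(A)=\mu(B)=\mu(C)$.

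\textbf{Case $a<c$.} For $t\leq a$ we have $F_t(A)\geq0$ and $F_t(C)>0$, so $F_t(B)>0$; hence $b>a$. For $t\geq c$ we have $F_t(A)<0$ and $F_t(C)\leq0$, so $F_t(B)<0$; hence $b<c$. Therefore $a<b<c$.

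\textbf{Case $a>c$.} This is symmetric and gives $a>b>c$.

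The three outcomes are mutually exclusive because $I$ is totally ordered, so exactly one alternative of \eqref{eq:strict-see-saw-convention} holds.

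I expect no real obstacle; the argument is a routine sign check. The only point needing care is deducing $b>a$ from $F_t(B)>0$ for all $t\leq a$. This follows from the trichotomy hypothesis applied to $B$: if $b\leq a$, then $t=b$ would satisfy $t\le a$ and give $F_b(B)=0$, a contradiction. The deduction of $b<c$ is the same with the inequalities reversed. No interval-connectedness is needed, since we only evaluate at the points $a$, $b$ and $c$, which lie in $I$.
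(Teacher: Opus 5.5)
Your proof is correct and is essentially the paper's argument: both combine additivity $F_t(B)=F_t(A)+F_t(C)$, evaluated at the zero of one term, with the sign pattern of the hypothesis. The only difference is bookkeeping. The paper evaluates at $t=\mu(B)$ and splits on $\mu(A)$ versus $\mu(B)$, whereas you split on $\mu(A)$ versus $\mu(C)$ and evaluate at those two points.
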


\begin{proof}
  Since $F_t(M)$ depends only on $[M]$ and has a unique zero in $t$, $\mu$ is isomorphism-invariant. Take a short exact sequence $0\to L\to M\to N\to0$ of nonzero objects and set $t=\mu(M)$. Additivity gives $F_t(L)+F_t(N)=F_t(M)=0$. If $\mu(L)<t$, the sign condition gives $F_t(L)<0<F_t(N)$, hence $\mu(L)<\mu(M)<\mu(N)$. If $\mu(L)=t$, then $F_t(L)=F_t(N)=0$, so $\mu(N)=t$; if $\mu(L)>t$, then $F_t(L)>0>F_t(N)$, so $\mu(N)<t$. Thus $\mu$ satisfies the see-saw property.
\end{proof}

The main criterion relates the existence of an inducing stability function to positive Grothendieck classes and to decreasing homomorphisms vanishing on the semistable subcategories.

\begin{theorem}\label{thm:positive-class-criterion}
  Let $\cA$ be an essentially small abelian length category, and let $(\Phi,\{\Pi_\varphi\}_{\varphi\in\Phi})$ be a stability data on $\cA$, \red{where $\Phi$ is an arbitrary nonempty totally ordered set}. \red{For $\varphi\in\Phi$, put $H_\varphi=\operatorname{span}_\bQ\{[E]\mid E\in\Pi_\varphi\}\subseteq K_0(\cA)_\bQ$.} The following conditions are equivalent.
  \begin{enumerate}
    \item There exist a totally ordered set $(P,\leq_P)$ and a stability function $\mu:\operatorname{Obj}^*(\cA)\to P$ inducing $(\Phi,\{\Pi_\varphi\}_{\varphi\in\Phi})$.
    \item \red{For every finite chain $\varphi_1<\cdots<\varphi_n$ in $\Phi$, put $H_i=H_{\varphi_i}$.} For vectors $\mathbf x_0,\ldots,\mathbf x_n\in K_0(\cA)_\bQ$, the system
    \begin{equation}\label{eq:positive-class-flow}
      \left\{\begin{aligned}
        \mathbf x_0=\mathbf x_n&=\mathbf 0,\\
        \mathbf x_i&\in K_0(\cA)_{\bQ_{\geq0}} &&(1\leq i<n),\\
        \mathbf x_i-\mathbf x_{i-1}&\in H_i &&(1\leq i\leq n)
      \end{aligned}\right.
    \end{equation}
    has only the solution $\mathbf x_0=\cdots=\mathbf x_n=\mathbf 0$.
    \item There are a totally ordered field $\bF$ and group homomorphisms \red{$\theta_\varphi:K_0(\cA)\to\bF$ $(\varphi\in\Phi)$} such that \red{$\theta_\varphi(H_\varphi)=0$ for every $\varphi\in\Phi$}, and \red{$\theta_\varphi([S])>\theta_\psi([S])$ whenever $\varphi<\psi$} and $S\in\operatorname{Sim}\cA$.
  \end{enumerate}
\end{theorem}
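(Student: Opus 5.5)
I would prove the cycle $(1)\Rightarrow(2)\Rightarrow(3)\Rightarrow(1)$.

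\emph{Step $(1)\Rightarrow(2)$.} Let $\mu$ induce the data. By Lemma~\ref{lem:stability-splitting-neutrality}, $\mu$ factors through classes in $K_0(\cA)$, and hence through positive rational classes after clearing denominators. Suppose a nonzero solution of \eqref{eq:positive-class-flow} exists. After scaling, all $\mathbf x_i$ are integral, and each difference $\mathbf x_i-\mathbf x_{i-1}$ can be written as $[E_i^+]-[E_i^-]$ with $E_i^\pm\in\Pi_{\varphi_i}$ (taking direct sums, which stay in the extension-closed $\Pi_{\varphi_i}$). Let $X_i$ be a semisimple object of class $\mathbf x_i$. The relation $[X_{i-1}]+[E_i^+]=[X_i]+[E_i^-]$ and the see-saw property for split sequences, with $\mu(E_i^\pm)=\lambda_i$, give a contradiction. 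Take the largest $i$ with $\mu(X_i)\ge \lambda_{i+1}$ and propagate comparisons. Concretely: $\mu(X_{i-1}\oplus E_i^+)=\mu(X_i\oplus E_i^-)$, and comparing with $\lambda_i$ shows that $\mu(X_{i-1})<\lambda_i$ forces $\mu(X_i)<\lambda_i<\lambda_{i+1}$ (when the $E$'s are nonzero). Since $X_0=0$, induction gives $\mu(X_i)<\lambda_{i+1}$ for every nonzero $X_i$ (the zero steps are harmless). At the end, $X_{n-1}\oplus E_n^+\cong E_n^-$ in classes, which forces $\mu(X_{n-1})=\lambda_n$ if $X_{n-1}\neq0$. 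This is a contradiction, so backward induction makes everything zero. I would organize this cleanly as a sign-propagation argument.

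\emph{Step $(2)\Rightarrow(3)$.} This is the main obstacle. I would first handle a finite chain and finitely many simples: (2) is exactly the infeasibility of a linear system over $\bQ$. A Farkas/Motzkin alternative (linear duality over $\bQ$) yields functionals $\theta_1,\dots,\theta_n$ vanishing on $H_i$ with $\theta_i-\theta_{i+1}$ strictly positive on simples in the relevant support. More precisely, the dual of ``$\mathbf x_i\ge0$, $\mathbf x_i-\mathbf x_{i-1}\in H_i$'' is $\theta_i\in H_i^\perp$ with $\theta_i-\theta_{i+1}\ge0$ coordinatewise, and strictness is obtained by summing solutions for each coordinate. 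To pass to arbitrary $\Phi$ and infinitely many simples, I would use compactness: the theory in the language of ordered fields, with constants $\theta_\varphi([S])$ and axioms ``$\theta_\varphi$ vanishes on each $[E]$, $E\in\Pi_\varphi$'' and ``$\theta_\varphi([S])>\theta_\psi([S])$'', is finitely satisfiable in $\bQ\subseteq\bR$ by the finite case. Each finite fragment mentions finitely many phases, finitely many objects $E$, and finitely many simples. Restricting to the support makes \eqref{eq:positive-class-flow} finite-dimensional, but $H_i$ must be replaced by the span of finitely many classes, which is only a subspace of $H_i$, so infeasibility persists. The compactness theorem then gives an ordered field $\bF$.

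\emph{Step $(3)\Rightarrow(1)$.} Take $I=\Phi$ with its order, and define $\mu(M)$ via the family $\theta_\varphi$. The idea is that $\theta_\varphi(M)$ is strictly decreasing in $\varphi$ since $[M]$ is a positive combination of simples. I need to show $\theta_{\phi}(M)$ changes sign at a single point. The approach is to embed into $\bF$-valued functions: work on $P=\Phi\times\{\text{cuts}\}$, or more simply define $\mu(M)$ as the Dedekind cut $\{\varphi:\theta_\varphi(M)>0\}$, which is a down-set because of the strict decrease. Ordering cuts together with possible zero points gives a totally ordered $P$. The see-saw property then follows exactly as in Proposition~\ref{prop:red-path-criterion}: additivity of $\theta_\varphi$ at the threshold gives opposite signs for $L$ and $N$. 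For $E\in\Pi_\varphi$ we have $\theta_\varphi(E)=0$, so $\mu(E)$ is the point $\varphi$, and these values are strictly increasing in $\varphi$. Lemma~\ref{lem:HN-recognition} then finishes the proof. The care needed is in making the cut-order total and in checking the see-saw when no $\theta_\varphi$ vanishes, which is done by comparing the cuts of $L$ and $N$ using $F(L)+F(N)=F(M)$ at every $\varphi$.
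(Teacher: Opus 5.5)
\textbf{There is a genuine gap in your step $(3)\Rightarrow(1)$.} Your steps $(1)\Rightarrow(2)$ and $(2)\Rightarrow(3)$ are essentially the paper's. The first propagates nonzeroness forward from the first nonzero $X_a$ using semisimple objects of class $\mathbf x_i$. The second applies Farkas/Gordan--Stiemke to finite subsystems and then a compactness argument over ordered fields.

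Write $\sigma_X(\varphi)$ for the sign of $\theta_\varphi(X)$. If $\mu(X)$ is only the cut $\{\varphi:\theta_\varphi(X)>0\}$, together with the zero point when one exists, then all objects whose sign change lies strictly between the same two phases get the same value. Strict see-saw then fails. Take $\cA=\mod(K\times K)$ with simples $S_1,S_2$, $\Phi=\{\varphi_1<\varphi_2\}$ and $\Pi_{\varphi_j}=\add S_j$. Put $\theta_1=(0,1)$ and $\theta_2=(-1,0)$ in the basis $[S_1],[S_2]$; these satisfy condition~(3). For the split sequence $0\to S_1\oplus S_2\to S_1\oplus S_2^{\oplus2}\to S_2\to0$, the sign patterns $(\sigma(\varphi_1),\sigma(\varphi_2))$ are $(+,-)$, $(+,-)$ and $(+,0)$. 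Hence $\mu(L)=\mu(M)<\mu(N)$, which matches none of the three see-saw cases.

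The argument of Proposition~\ref{prop:red-path-criterion} needs an exact threshold $t=\mu(M)$ with $F_t(M)=0$, and a cut supplies none. Comparing the signs of $\theta_\varphi(L)+\theta_\varphi(N)$ phase by phase gives only the weak inequalities $\sigma_L\le\sigma_M\le\sigma_N$, never strictness.

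\textbf{How to close the cycle.} For finite $\Phi$, interpolate linearly between consecutive $\theta_i$, as in Lemma~\ref{lem:decreasing-cuts-induction}. Then every nonzero object has an exact zero in $[1,n]_\bF$ and the threshold argument applies. For arbitrary $\Phi$ there is no canonical interpolation across the gaps of $\Phi$, since condition~(3) constrains $\theta_\varphi$ only at points of $\Phi$. This is why the paper does not prove $(3)\Rightarrow(1)$ directly. It proves $(3)\Rightarrow(2)$ by the telescoping identity $\sum_i(\theta_i-\theta_{i+1})(\mathbf x_i)=\sum_i\theta_i(\mathbf x_i-\mathbf x_{i-1})=0$. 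It proves $(2)\Rightarrow(1)$ in three steps:
\begin{enumerate}
\item For each finite set $\mathscr F$ of short exact sequences, it builds an admissible signing from interpolated separators on the finitely many phases involved, padded by $\pm c\ell$ at the ends (Lemma~\ref{lem:finite-chain-local-signing}).
\item It glues these signings by Tychonoff compactness of $\{+,0,-\}^{\mathscr E}$ (Proposition~\ref{prop:reduction to finite case}).
\item It turns the glued admissible signing into a stability function via Proposition~\ref{prop:stability-induction-criterion}.
\end{enumerate}
You need some ingredient of this kind; the cut construction alone does not give a stability function.
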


We first prove (1)$\Rightarrow$(2). \red{For (2)$\Rightarrow$(1), we apply Proposition~\ref{prop:reduction to finite case}; for (2)$\Leftrightarrow$(3), Proposition~\ref{lem:finite-dimensional-separation} gives rational solutions of the finite systems, and Lemma~\ref{lem:rational-system-compactness} gives a simultaneous solution over a totally ordered field.}

\begin{lemma}\label{lem:positive-class-obstruction}
  Let $(P,\leq_P)$ be a totally ordered set, and let $\mu:\operatorname{Obj}^*(\cA)\to P$ induce $(\Phi,\{\Pi_\varphi\}_{\varphi\in\Phi})$. Then condition~(2) of Theorem~\ref{thm:positive-class-criterion} holds.
\end{lemma}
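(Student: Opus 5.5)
Fix a finite chain $\varphi_1<\cdots<\varphi_n$ and a solution $\mathbf x_0,\ldots,\mathbf x_n$ of \eqref{eq:positive-class-flow}; we show it is zero. Clearing denominators, we may assume every $\mathbf x_i$ is integral and every difference $\mathbf x_i-\mathbf x_{i-1}$ is an integer combination of classes of objects of $\Pi_{\varphi_i}$. Since $\Pi_{\varphi_i}$ is closed under direct sums (extension-closed), each difference can be written as $\mathbf d_i=[E_i]-[E_i']$ with $E_i,E_i'\in\Pi_{\varphi_i}$, possibly zero. Write $p_i=\lambda(\varphi_i)$, so $p_1<\cdots<p_n$ and $\mu$ takes the value $p_i$ on nonzero objects of $\Pi_{\varphi_i}$.

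The key tool will be Lemma~\ref{lem:stability-splitting-neutrality}: $\mu$ depends only on the class in $K_0(\cA)$. So define, for each nonzero class $\mathbf y$ in the positive integral cone, $\mu(\mathbf y)$ as $\mu$ of any object with that class (e.g.\ a direct sum of simples). The see-saw property for split sequences then gives: for nonzero positive classes $\mathbf a,\mathbf b$, $\mu(\mathbf a+\mathbf b)$ lies weakly between $\mu(\mathbf a)$ and $\mu(\mathbf b)$, strictly unless they are equal. Consequently, for positive classes: if $\mathbf a+[E]=\mathbf b+[E']$ with $E,E'\in\Pi_{\varphi}$ nonzero, comparisons with $p=\mu$ on $\Pi_\varphi$ are controlled. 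Precisely, I will prove by induction on $i$ the statement: if $\mathbf x_i\neq0$ then $\mu(\mathbf x_i)\le p_i$, strictly failing equality only in a controlled way; more cleanly, the claim is $\mu(\mathbf x_i)\leq p_i$ whenever $\mathbf x_i\ne0$. Step: $\mathbf x_i+[E_i']=\mathbf x_{i-1}+[E_i]$. If $\mathbf x_{i-1}\neq0$, induction gives $\mu(\mathbf x_{i-1})\le p_{i-1}<p_i$, so the right side has $\mu\le p_i$ (the value lies between $\mu(\mathbf x_{i-1})$ and $p_i$, or equals $\mu(\mathbf x_{i-1})$ if $E_i=0$). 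If $\mathbf x_{i-1}=0$ the right side is $[E_i]$ with value $p_i$. Then on the left, if $\mu(\mathbf x_i)>p_i$, combining with $[E_i']$ of value $p_i$ (or nothing) yields value $>p_i$, contradiction. So $\mu(\mathbf x_i)\le p_i$. Actually we need strict information at the end, so I will strengthen: $\mu(\mathbf x_i)<p_{i+1}$ holds automatically, which is what the next step uses.

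At the end, $\mathbf x_n=0$, so $\mathbf x_{n-1}+[E_n]=[E_n']$. If $\mathbf x_{n-1}\neq0$ then $\mu(\mathbf x_{n-1})\le p_{n-1}<p_n$, and the left side has value $<p_n$ (strictly, since $\mathbf x_{n-1}$ has value $<p_n$, whether or not $E_n=0$), while the right side is either $0$ (impossible, positive left side) or has value $p_n$; contradiction. Hence $\mathbf x_{n-1}=0$, and descending induction (apply the same argument to the shorter chain $\varphi_1<\cdots<\varphi_{n-1}$ with $\mathbf x_{n-1}=0$) gives all $\mathbf x_i=0$. The case $n=1$ is trivial.

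The main obstacle is making the "value of a class" rigorous: one must check Lemma~\ref{lem:stability-splitting-neutrality} yields a well-defined $\mu$ on positive classes and that the betweenness property holds for sums via split sequences $0\to A\to A\oplus B\to B\to 0$, including the degenerate cases where some $E_i$, $E_i'$ vanish; and the denominator clearing requires scaling all $\mathbf x_i$ by one common integer, which preserves the system since $H_i$ are $\bQ$-spaces and the integral parts of $H_i$ contain differences of classes in $\Pi_{\varphi_i}$.
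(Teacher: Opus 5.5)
Your proof is correct and follows essentially the paper's route. You transfer $\mu$ to nonnegative integral classes via Lemma~\ref{lem:stability-splitting-neutrality}, use strict see-saw on split sequences to propagate the bound $\mu(\mathbf x_i)\leq p_i$ forward along the chain, and contradict $\mathbf x_n=\mathbf 0$. The differences are only organizational: the paper pads $E_i,F_i$ to be nonzero, starts at the first nonzero $\mathbf x_a$ and shows nonvanishing propagates to $\mathbf x_n$, whereas you handle the zero cases directly, force $\mathbf x_{n-1}=\mathbf 0$, and induct on the chain length.
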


\begin{proof}
  \red{Fix a finite chain $\varphi_1<\cdots<\varphi_n$ in $\Phi$, and put $H_i=H_{\varphi_i}$.} For $1\leq i\leq n$, let $\lambda_i\in P$ be the constant value of $\mu$ on $\Pi_{\varphi_i}\setminus\{0\}$; then $\lambda_1<\cdots<\lambda_n$. Suppose $(\mathbf x_0,\ldots,\mathbf x_n)$ is a nonzero solution of \eqref{eq:positive-class-flow}. Write each difference as a finite sum $\mathbf x_i-\mathbf x_{i-1}=\sum_r c_{ir}[A_{ir}]$, where $c_{ir}\in\bQ$ and $A_{ir}\in\Pi_{\varphi_i}$. Multiplying all $\mathbf x_i$ and $c_{ir}$ by a common positive integer, we may assume $c_{ir}\in\bZ$ and $\mathbf x_i=\sum_S a_{i,S}[S]$ with $a_{i,S}\in\bZ_{\geq0}$. Set $Z_i=\bigoplus_S S^{\oplus a_{i,S}}$ for $1\leq i<n$ and $Z_0=Z_n=0$. Define
  \begin{equation}
    E_i=\bigoplus_{r:c_{ir}>0}A_{ir}^{\oplus c_{ir}}, \qquad F_i=\bigoplus_{r:c_{ir}<0}A_{ir}^{\oplus(-c_{ir})}.
  \end{equation}
  Then $E_i,F_i\in\Pi_{\varphi_i}$ and $\mathbf x_i-\mathbf x_{i-1}=[E_i]-[F_i]$. Without loss of generality, we can assume $E_i,F_i\ne0$ for each $i$. Otherwise, one can choose $0\ne W_i\in\Pi_{\varphi_i}$ and replace $(E_i,F_i)$ by $(E_i\oplus W_i,F_i\oplus W_i)$. Moreover,
  \begin{equation}\label{eq:positive-class-balances}
    [E_i\oplus Z_{i-1}]=[F_i\oplus Z_i]\qquad(1\leq i\leq n).
  \end{equation}
  Let $a=\min\{i:Z_i\ne0\}$; then $1\leq a<n$. Since $Z_{a-1}=0$, Lemma~\ref{lem:stability-splitting-neutrality} gives $\mu(F_a\oplus Z_a)=\mu(E_a)=\lambda_a=\mu(F_a)$, so see-saw implies $\mu(Z_a)=\lambda_a<\lambda_{a+1}$. We now induct on $i=a+1,\ldots,n$. Suppose $Z_{i-1}\ne0$ and $\mu(Z_{i-1})<\lambda_i$. Strict see-saw and Lemma~\ref{lem:stability-splitting-neutrality} give $\mu(F_i\oplus Z_i)=\mu(E_i\oplus Z_{i-1})<\lambda_i$. Since $\mu(F_i)=\lambda_i$, we have $Z_i\ne0$; strict see-saw then gives $\mu(Z_i)<\lambda_i$. For $i<n$, this implies $\mu(Z_i)<\lambda_{i+1}$, so induction yields $Z_n\ne0$, contradicting $Z_n=0$.
\end{proof}

The next step constructs the required homomorphisms for a finite-dimensional rational system.

For the following proposition, let $n\geq1$ and $d\geq0$ be integers, and let $H_1,\ldots,H_n$ be $\bQ$-linear subspaces of $\bQ^d$. Write $\mathbf e_1,\ldots,\mathbf e_d$ for the standard basis, and choose a basis $\mathbf h_{i,1},\ldots,\mathbf h_{i,r_i}$ of each $H_i$. Consider the system in $\bQ$-linear maps $\theta_1,\ldots,\theta_n:\bQ^d\to\bQ$
\begin{equation}\label{eq:rational-cut-system}
  \left\{\begin{aligned}
    \theta_i(\mathbf h_{i,k})&=0 &&(1\leq i\leq n,\ 1\leq k\leq r_i),\\
    \theta_i(\mathbf e_j)-\theta_{i+1}(\mathbf e_j)&\geq1 &&(1\leq i<n,\ 1\leq j\leq d).
  \end{aligned}\right.
\end{equation}

\begin{proposition}\label{lem:finite-dimensional-separation}
  The following conditions are equivalent.
  \begin{enumerate}
    \item If $\mathbf x_0,\ldots,\mathbf x_n\in\bQ_{\geq0}^d$ satisfy $\mathbf x_0=\mathbf x_n=\mathbf0$ and $\mathbf x_i-\mathbf x_{i-1}\in H_i$ for $1\leq i\leq n$, then $\mathbf x_0=\cdots=\mathbf x_n=\mathbf0$.
    \item \eqref{eq:rational-cut-system} has a rational solution.
  \end{enumerate}
  In (2), the coefficients $\theta_i(\mathbf e_j)$ can all be chosen in $\bZ$.
\end{proposition}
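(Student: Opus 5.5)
This is a finite-dimensional linear duality statement, and I would prove it with Farkas' lemma over $\bQ$ (equivalently, linear programming duality). Valid inequalities and their certificates are rational, because all data are rational.

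For (2)$\Rightarrow$(1), let $\theta_1,\ldots,\theta_n$ solve \eqref{eq:rational-cut-system} and let $(\mathbf x_i)$ be as in (1). Since $\mathbf x_i-\mathbf x_{i-1}\in H_i$, we have $\theta_i(\mathbf x_i)=\theta_i(\mathbf x_{i-1})$. Summing over $i$ and using $\mathbf x_0=\mathbf x_n=\mathbf0$ gives
\[
0=\sum_{i=1}^n\theta_i(\mathbf x_i-\mathbf x_{i-1})=\sum_{i=1}^{n-1}(\theta_i-\theta_{i+1})(\mathbf x_i),
\]
by summation by parts. Each term on the right is at least the sum of the coordinates of $\mathbf x_i\ge0$. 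Hence every $\mathbf x_i=\mathbf 0$.

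For (1)$\Rightarrow$(2), I regard the unknowns as the vector $(\theta_i(\mathbf e_j))\in\bQ^{nd}$. The system \eqref{eq:rational-cut-system} then consists of linear equations and inequalities $\geq1$ with rational coefficients. By Farkas' lemma in its Motzkin transposition form over the ordered field $\bQ$, infeasibility is equivalent to the existence of multipliers with the following properties. There are $\lambda_{i,j}\geq0$ for the inequalities and free multipliers $\mu_{i,k}$ for the equations. The combination of the left-hand sides vanishes identically, while $\sum\lambda_{i,j}>0$ (so the right-hand side $\sum\lambda_{i,j}\cdot1$ is positive).

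I would translate this certificate as follows. Put $\mathbf x_i=\sum_j\lambda_{i,j}\mathbf e_j\in\bQ_{\geq0}^d$ for $1\leq i<n$, and $\mathbf x_0=\mathbf x_n=\mathbf0$. The coefficient of the variable $\theta_i(\mathbf e_j)$ in the combination is
\[
\lambda_{i,j}-\lambda_{i-1,j}+\sum_k\mu_{i,k}(\mathbf h_{i,k})_j,
\]
with the convention $\lambda_{0,j}=\lambda_{n,j}=0$. Its vanishing says exactly that $\mathbf x_i-\mathbf x_{i-1}=-\sum_k\mu_{i,k}\mathbf h_{i,k}\in H_i$. Since $\sum\lambda_{i,j}>0$, this is a nonzero solution of the system in (1), a contradiction. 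Hence \eqref{eq:rational-cut-system} is feasible over $\bQ$. The only point needing care is the sign bookkeeping in this translation; the main technical input is Farkas' lemma with mixed equalities and inequalities, which is standard over $\bQ$, for example by Fourier--Motzkin elimination.

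For integrality, I would note that a rational solution can be scaled by a positive common denominator $N$. The equations are homogeneous, so they are preserved. The inequalities become $\geq N\geq1$, so they are also preserved. This makes all $\theta_i(\mathbf e_j)\in\bZ$. The edge cases $d=0$ or $n=1$ are trivial: there are no inequality constraints, and $\theta_i=0$ works.
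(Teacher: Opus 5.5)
Your proof is correct and is essentially the paper's argument: both dualize the linear system and identify the dual certificate with a nonnegative flow $(\mathbf x_i)$ through the same summation-by-parts identity. The only difference is packaging. You apply the inhomogeneous Farkas lemma with equality constraints (which is what you actually use, rather than Motzkin's strict form) directly to the coordinates $\theta_i(\mathbf e_j)$, whereas the paper applies the Gordan--Stiemke alternative to $W=\operatorname{im}\Delta$ and $W^\perp$.
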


\begin{proof}
  If $d=0$ or $n=1$, both conditions hold with $\theta_i=0$. Assume $d>0$ and $n>1$, and put $N=d(n-1)$. For a subspace $L\subseteq\bQ^m$, let $L^\perp$ denote its orthogonal complement with respect to the standard inner product, denoted by $\langle\cdot,\cdot\rangle$. Using this inner product, we identify $(\bQ^d)^*$ with $\bQ^d$, with $\theta_i$ corresponding to its coefficient vector
  \begin{equation}
    \bm\theta_i=(\theta_i(\mathbf e_1),\ldots,\theta_i(\mathbf e_d)), \qquad \theta_i(\mathbf x)=\langle\bm\theta_i,\mathbf x\rangle.
  \end{equation}
  Thus $\theta_i(H_i)=0$ if and only if $\bm\theta_i\in H_i^\perp$. For $\bm\theta=(\bm\theta_1,\ldots,\bm\theta_n)$, define
  \begin{equation}
    \Delta:\bigoplus_{i=1}^n H_i^\perp\longrightarrow\bQ^N,\qquad \Delta(\bm\theta)=(\bm\theta_1-\bm\theta_2,\ldots, \bm\theta_{n-1}-\bm\theta_n).
  \end{equation}
  Put $W=\operatorname{im}\Delta$. For $\mathbf x_1,\ldots,\mathbf x_{n-1}\in\bQ^d$ and $\mathbf x_0=\mathbf x_n=\mathbf0$, we have
  \begin{equation}\label{eq:class-flow-duality}
    \sum_{i=1}^{n-1}\langle\bm\theta_i-\bm\theta_{i+1},\mathbf x_i\rangle =\sum_{i=1}^n\langle\bm\theta_i,\mathbf x_i-\mathbf x_{i-1}\rangle.
  \end{equation}
  Since each $\bm\theta_i$ ranges independently over $H_i^\perp$ and $(H_i^\perp)^\perp=H_i$, this identity gives
  \begin{equation}
    (\mathbf x_1,\ldots,\mathbf x_{n-1})\in W^\perp \quad\Longleftrightarrow\quad \mathbf x_i-\mathbf x_{i-1}\in H_i\quad(1\leq i\leq n).
  \end{equation}
  Hence (1) is equivalent to $W^\perp\cap\bQ_{\geq0}^N=\{\mathbf0\}$. The Gordan--Stiemke theorem over $\bQ$ (see, e.g., \cite[Lemma~1.2, p.~530]{SaundersSchneider1979}) gives
  \begin{equation}\label{eq:orthant-alternative}
    W\cap\bQ_{>0}^N\ne\varnothing \quad\Longleftrightarrow\quad W^\perp\cap\bQ_{\geq0}^N=\{\mathbf0\}.
  \end{equation}
  Thus (1) holds exactly when there is $\bm\theta\in\bigoplus_iH_i^\perp$ with $\Delta(\bm\theta)\in\bQ_{>0}^N$. Multiplying all $\bm\theta_i$ by a common positive denominator makes their coordinates integers and every coordinate of $\bm\theta_i-\bm\theta_{i+1}$ a positive integer, hence at least $1$. The corresponding maps therefore satisfy (2) with integer coefficients. Conversely, the maps in (2) give $\Delta(\bm\theta)\in\bQ_{>0}^N$, so (1) follows from \eqref{eq:orthant-alternative}.
\end{proof}

\red{The homomorphisms we seek are specified by a possibly infinite system of linear equations and strict inequalities with rational coefficients, each involving only finitely many unknowns. The next lemma shows that rational solvability of every finite subsystem gives one solution of the entire system in a common totally ordered field containing $\bQ$. The common ordered field need not be $\bQ$.}

\begingroup
Recall that an \emph{ultrafilter} on a nonempty set $X$ is a family $\mathcal U$ of subsets of $X$ such that $X\in\mathcal U$, $\varnothing\notin\mathcal U$, and
\begin{equation}
  \begin{aligned}
    A,B\in\mathcal U&\ \Longrightarrow\ A\cap B\in\mathcal U,\\
    A\in\mathcal U,\quad A\subseteq B\subseteq X &\ \Longrightarrow\ B\in\mathcal U,\\
    A\subseteq X&\ \Longrightarrow\ \text{exactly one of }A,\ X\setminus A\text{ belongs to }\mathcal U.
  \end{aligned}
\end{equation}
For example, for any $x\in X$, the family $\{A\subseteq X:x\in A\}$ is an ultrafilter on $X$. The ultrafilter lemma (see, e.g., \cite[Lemma~7.2(iii) and Theorem~7.5, pp.~74--75]{Jech2003}), a consequence of Zorn's lemma, states that any family of subsets of $X$ whose finite intersections are nonempty is contained in an ultrafilter on $X$.
\par\endgroup

\begin{lemma}\label{lem:rational-system-compactness}
  Let $T$ be a set and $\mathscr C$ a set of linear equations and strict inequalities with rational coefficients, each involving finitely many unknowns $u_t$ $(t\in T)$. If every finite subset of $\mathscr C$ has a rational solution, then $\mathscr C$ has a solution $u_t\in\bF$ in some totally ordered field $\bF\supseteq\bQ$.
\end{lemma}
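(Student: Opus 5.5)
We would prove Lemma~\ref{lem:rational-system-compactness} by an ultrapower construction, which is the compactness theorem for ordered fields made explicit using the ultrafilter lemma just recalled. Let $X$ be the set of finite subsets of $\mathscr C$. For each $\mathscr D\in X$, fix a rational solution $\mathbf u^{\mathscr D}=(u^{\mathscr D}_t)_{t\in T}$ of $\mathscr D$. The system $\mathscr D$ involves only finitely many unknowns; we set $u^{\mathscr D}_t=0$ for all other $t$.

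For each $\mathscr D\in X$, put $X_{\mathscr D}=\{\mathscr D'\in X:\mathscr D\subseteq\mathscr D'\}$. The family $\{X_{\mathscr D}\}_{\mathscr D\in X}$ has the finite intersection property, since $X_{\mathscr D_1}\cap\cdots\cap X_{\mathscr D_r}=X_{\mathscr D_1\cup\cdots\cup\mathscr D_r}\ne\varnothing$. By the ultrafilter lemma, it is contained in an ultrafilter $\mathcal U$ on $X$.

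Let $\bF=\bQ^X/\mathcal U$ be the ultrapower. Its elements are sequences $(a_{\mathscr D})_{\mathscr D\in X}$ of rationals, with two sequences identified when they agree on a set in $\mathcal U$. Addition and multiplication are coordinatewise. We declare $a<b$ when $\{\mathscr D:a_{\mathscr D}<b_{\mathscr D}\}\in\mathcal U$.

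The routine verification is that $\bF$ is a totally ordered field. The ring axioms hold because they hold coordinatewise. Totality of the order follows from the trichotomy axiom of an ultrafilter applied to the three sets where $a_{\mathscr D}<b_{\mathscr D}$, $a_{\mathscr D}=b_{\mathscr D}$, and $a_{\mathscr D}>b_{\mathscr D}$: these partition $X$, so exactly one of them lies in $\mathcal U$. Inverses of nonzero elements are taken coordinatewise on the set in $\mathcal U$ where the coordinate is nonzero, and are defined arbitrarily elsewhere. The compatibility of the order with $+$ and $\cdot$ holds coordinatewise on a set in $\mathcal U$, because $\mathcal U$ is closed under finite intersections. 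Constant sequences give an order-preserving field embedding $\bQ\hookrightarrow\bF$.

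Set $u_t=[(u^{\mathscr D}_t)_{\mathscr D}]\in\bF$. Take any constraint $c\in\mathscr C$, say $\sum_k q_k u_{t_k}=q$ or $\sum_k q_k u_{t_k}>q$, where the sum is finite. For every $\mathscr D\in X_{\{c\}}$, the rational point $\mathbf u^{\mathscr D}$ satisfies $c$ coordinatewise. Since $X_{\{c\}}\in\mathcal U$, the corresponding equality or strict inequality holds in $\bF$. This uses that each constraint has finitely many unknowns and rational coefficients, so that evaluating it commutes with the coordinatewise operations.

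There is no real obstacle; the only point requiring care is the well-definedness of the order and of inverses on $\bF$. Alternatively, one could cite the compactness theorem of first-order logic for the theory of ordered fields together with constants $u_t$ and the axioms $\mathscr C$. The ultrapower argument above is the self-contained version of that.
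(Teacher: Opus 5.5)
Your proposal is correct and follows the paper's proof: both form the ultrapower of $\bQ$ over the set of finite subsets of $\mathscr C$, and your sets $X_{\{c\}}$ are exactly the paper's $B_c$, so the two ultrafilters are chosen from the same filter. The only difference is that you verify the ordered-field axioms and the transfer of each constraint by hand, where the paper cites \L o\'s's theorem.
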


\begin{proof}
  We specialize the ultraproduct proof of the compactness theorem for first-order logic (see, e.g., \cite[Chapter~5, proof of Theorem~4.1, p.~102]{BellSlomson1974}). Let $\cJ=\{J\subseteq\mathscr C:|J|<\infty\}$. For each $J\in\cJ$, choose a rational solution $(a_t^{(J)})_{t\in T}$ of $J$, setting unused coordinates to zero. The sets $B_c=\{J\in\cJ:c\in J\}$ $(c\in\mathscr C)$ have the finite intersection property, since $\{c_1,\ldots,c_r\}\in B_{c_1}\cap\cdots\cap B_{c_r}$. By the ultrafilter lemma, choose an ultrafilter $\mathcal U$ on $\cJ$ containing every $B_c$.

  Form the ultrapower $\bF=\bQ^\cJ/{\sim_{\mathcal U}}$ of the ordered field $\bQ$, where, for $\mathbf q=(q_J)_{J\in\cJ}$ and $\mathbf r=(r_J)_{J\in\cJ}$,
  \begin{equation}\label{eq:ultrapower-equivalence}
    \mathbf q\sim_{\mathcal U}\mathbf r \quad\Longleftrightarrow\quad \{J\in\cJ:q_J=r_J\}\in\mathcal U.
  \end{equation}
  Write $[\mathbf q]_{\mathcal U}$ for the class of $\mathbf q$. By \L o\'s's theorem (see, e.g., \cite[Theorem~12.3 and Corollary~12.5, pp.~159--161]{Jech2003}), $\bF$ is a totally ordered field containing $\bQ$ via \mbox{$q\mapsto[(q)_{J\in\cJ}]_{\mathcal U}$}. Put
  \begin{equation}\label{eq:ultrapower-solution}
    u_t=\bigl[(a_t^{(J)})_{J\in\cJ}\bigr]_{\mathcal U}\in\bF \qquad(t\in T).
  \end{equation}
  For each $c\in\mathscr C$, the chosen solution satisfies $c$ for every $J\in B_c$. Since $B_c\in\mathcal U$, \L o\'s's theorem shows that $(u_t)_{t\in T}$ satisfies $c$ in $\bF$.
\end{proof}

We next solve the equations $\theta_\varphi(H_\varphi)=0$ and the inequalities $\theta_\varphi([S])>\theta_\psi([S])$ for $\varphi<\psi$ simultaneously. Proposition~\ref{lem:finite-dimensional-separation} gives a rational solution for every finite subsystem. Lemma~\ref{lem:rational-system-compactness} then gives a solution of the entire system in one totally ordered field.

\begin{lemma}\label{lem:positive-class-homomorphisms}
  Suppose \red{condition~(2) of Theorem~\ref{thm:positive-class-criterion} holds}. Then there are a totally ordered field $\bF$ and group homomorphisms \red{$\theta_\varphi:K_0(\cA)\to\bF$ $(\varphi\in\Phi)$} such that \red{$\theta_\varphi(H_\varphi)=0$ for $\varphi\in\Phi$}, and \red{$\theta_\varphi([S])>\theta_\psi([S])$ for $\varphi<\psi$} and $S\in\operatorname{Sim}\cA$.
\end{lemma}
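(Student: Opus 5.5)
We set up one big system and apply Lemma~\ref{lem:rational-system-compactness}. Take as unknowns $u_{\varphi,S}$ for $\varphi\in\Phi$ and $S\in\operatorname{Sim}\cA$; these are the intended values $\theta_\varphi([S])$. Let $\mathscr C$ consist of two kinds of conditions. The first kind are the linear equations $\sum_S m_S(E)\,u_{\varphi,S}=0$ for every $\varphi\in\Phi$ and every $E\in\Pi_\varphi$ in the fixed skeleton. The second kind are the strict inequalities $u_{\varphi,S}-u_{\psi,S}>0$ for all $\varphi<\psi$ in $\Phi$ and $S\in\operatorname{Sim}\cA$. Each condition has rational coefficients and involves only finitely many unknowns, since $[E]$ has finite support. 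A solution in an ordered field $\bF$ defines homomorphisms $\theta_\varphi([S])=u_{\varphi,S}$. These homomorphisms vanish on the generators of $H_\varphi$, hence on $H_\varphi$ by $\bQ$-linearity, and they satisfy the required strict decrease on each simple.

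It therefore suffices to show that every finite subset $J\subseteq\mathscr C$ has a rational solution. First list the finitely many phases appearing in $J$ as a chain $\varphi_1<\cdots<\varphi_n$. Let $\Sigma$ be the finite set of simples whose unknowns occur in $J$, together with all simples in the supports of the classes $[E]$ occurring in $J$. Put $d=|\Sigma|$, so that the relevant coordinates live in $\bQ^d=\bigoplus_{S\in\Sigma}\bQ[S]$. Let $H_i'\subseteq\bQ^d$ be the span of the classes $[E]$ occurring in the equations of $J$ for phase $\varphi_i$. Each $H_i'$ is contained in $H_{\varphi_i}\cap\bQ^d$.

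Next verify condition~(1) of Proposition~\ref{lem:finite-dimensional-separation} for the data $H_1',\dots,H_n'$. Suppose $\mathbf x_0,\dots,\mathbf x_n\in\bQ_{\geq0}^d$ with $\mathbf x_0=\mathbf x_n=0$ and $\mathbf x_i-\mathbf x_{i-1}\in H_i'$. Viewed in $K_0(\cA)_\bQ$, these vectors lie in the nonnegative cone, and their differences lie in $H_{\varphi_i}$. Condition~(2) of Theorem~\ref{thm:positive-class-criterion}, applied to the chain $\varphi_1<\cdots<\varphi_n$, then forces all $\mathbf x_i=0$. Proposition~\ref{lem:finite-dimensional-separation} now gives rational maps $\theta_i$ on $\bQ^d$ that vanish on $H_i'$ and satisfy $\theta_i(\mathbf e_j)-\theta_{i+1}(\mathbf e_j)\geq1$ for all $j$. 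Chaining these adjacent differences gives $\theta_i(\mathbf e_j)>\theta_k(\mathbf e_j)$ whenever $i<k$. Setting $u_{\varphi_i,S}=\theta_i([S])$ for $S\in\Sigma$, and assigning arbitrary values (say $0$) to the unknowns not in $J$, solves $J$.

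The only delicate point is making sure that the finite truncation really captures every unknown and support involved in $J$. Enlarging $\Sigma$ as above handles this. The phases that appear in $J$ only through inequalities, and not through equations, are harmless: they are included in the chain with $H_i'$ possibly zero, and this only weakens the hypothesis we must check. Lemma~\ref{lem:rational-system-compactness} then yields the field $\bF$ and the homomorphisms $\theta_\varphi$.
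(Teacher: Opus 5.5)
Your proposal is correct and follows the paper's proof essentially step for step. It uses the same system in the unknowns $u_{\varphi,S}$, the same reduction of each finite subsystem $J$ to Proposition~\ref{lem:finite-dimensional-separation} via the spans of the classes occurring in $J$, the same chaining of the adjacent gaps $\geq1$, and the same final appeal to Lemma~\ref{lem:rational-system-compactness} with $T=\Phi\times\operatorname{Sim}\cA$. Enlarging $\Sigma$ by the supports of the $[E]$ is harmless but redundant: the unknowns occurring in the equation for $E$ at phase $\varphi$ are exactly the $u_{\varphi,S}$ with $S\in\operatorname{supp}([E])$.
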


\begin{proof}
  Consider scalar unknowns \red{$u_{\varphi,S}$} and the system
  \begin{equation}\label{eq:slice-homomorphism-system}
    \left\{\begin{aligned}
      \sum_Sm_S(E)\red{u_{\varphi,S}}&=0 && \red{(\varphi\in\Phi,\ E\in\Pi_\varphi),}\\
      \red{u_{\varphi,S}}&>\red{u_{\psi,S}} && \red{(\varphi<\psi,\ S\in\operatorname{Sim}\cA).}
    \end{aligned}\right.
  \end{equation}
  \begin{claim}
    Every finite subsystem of \eqref{eq:slice-homomorphism-system} has a rational solution.
  \end{claim}

  \begin{proof}[Proof of the claim]
    Let $J$ be a finite set of equations and strict inequalities chosen from \eqref{eq:slice-homomorphism-system}. \red{We may assume $J$ contains a variable. Put}
    \begingroup
    \begin{equation}
      \begin{aligned}
        \Phi_J&=\{\varphi\in\Phi:\exists S,\ u_{\varphi,S}\text{ occurs in }J\}
        =\{\varphi_1<\cdots<\varphi_{\revision{n}}\},\\
        \Sigma&=\{S\in\operatorname{Sim}\cA:\exists\varphi,\ u_{\varphi,S}\text{ occurs in }J\}.
      \end{aligned}
    \end{equation}
    \endgroup
    Then $\Sigma$ is finite. For each \red{$1\leq i\leq\revision{n}$}, let $H_i^J$ be the $\bQ$-span of the $[E]$ whose equations \red{at phase $\varphi_i$} occur in $J$. Then $H_i^J\subseteq\red{H_{\varphi_i}}\cap\bigoplus_{S\in\Sigma}\bQ[S]$. Identify $\bigoplus_{S\in\Sigma}\bQ[S]$ with $\bQ^{|\Sigma|}$ using the simple classes as a coordinate basis. Since $H_i^J\subseteq\red{H_{\varphi_i}}$, the hypothesis implies condition~(1) of Proposition~\ref{lem:finite-dimensional-separation} for these subspaces. That proposition therefore gives $\bQ$-linear maps $\psi_i:\bigoplus_{S\in\Sigma}\bQ[S]\to\bQ$ with $\psi_i(H_i^J)=0$ and $\psi_i([S])-\psi_{i+1}([S])\geq1$ for \red{$1\leq i<\revision{n}$} and $S\in\Sigma$. \red{For $i<j$ and $S\in\Sigma$,}
    \begingroup
    \begin{equation}
      \psi_i([S])-\psi_j([S])
      =\sum_{k=i}^{j-1}\bigl(\psi_k([S])-\psi_{k+1}([S])\bigr)
      \geq j-i>0.
    \end{equation}
    \endgroup
    Setting \red{$u_{\varphi_i,S}=\psi_i([S])$ for $1\leq i\leq\revision{n}$, $S\in\Sigma$, and all other $u_{\varphi,S}=0$} gives a rational solution of $J$.
  \end{proof}

  By the claim, Lemma~\ref{lem:rational-system-compactness}, applied with $T=\red{\Phi}\times\operatorname{Sim}\cA$, now gives a totally ordered field $\bF\supseteq\bQ$ and elements \red{$u_{\varphi,S}\in\bF$} satisfying all of \eqref{eq:slice-homomorphism-system}. Define
  \begin{equation}\label{eq:ultrapower-slice-homomorphisms}
    \red{\theta_\varphi([S])=u_{\varphi,S}}\qquad\red{(\varphi\in\Phi,\ S\in\operatorname{Sim}\cA).}
  \end{equation}
  Since the $[S]$ form a $\bZ$-basis of $K_0(\cA)$, these values extend uniquely to group homomorphisms \red{$\theta_\varphi:K_0(\cA)\to\bF$}. For \red{$E\in\Pi_\varphi$}, we have \red{$\theta_\varphi(E)=\sum_Sm_S(E)u_{\varphi,S}=0$}, so \red{$\theta_\varphi(H_\varphi)=0$}. Also \red{$\theta_\varphi([S])=u_{\varphi,S}>u_{\psi,S}=\theta_\psi([S])$ for $\varphi<\psi$} and $S\in\operatorname{Sim}\cA$.
\end{proof}

\red{We next consider $|\Phi|<\infty$.}

\begin{lemma}\label{lem:decreasing-cuts-induction}
  \red{Assume $\Phi=\{\varphi_1<\cdots<\varphi_n\}$ and put $H_i=H_{\varphi_i}$.} Let $\bF$ be a totally ordered field, and suppose group homomorphisms $\theta_1,\ldots,\theta_n:K_0(\cA)\to\bF$ satisfy $\theta_i(H_i)=0$ for $1\leq i\leq n$ and $\theta_i([S])>\theta_{i+1}([S])$ for $1\leq i<n$ and $S\in\operatorname{Sim}\cA$. Then there is a stability function $\mu:\operatorname{Obj}^*(\cA)\to[1,n]_\bF$ inducing $(\Phi,\{\Pi_\varphi\}_{\varphi\in\Phi})$, with $\mu(E)=i$ for $1\leq i\leq n$ and $0\ne E\in\Pi_{\varphi_i}$.
\end{lemma}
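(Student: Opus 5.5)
We interpolate the homomorphisms $\theta_1,\ldots,\theta_n$ piecewise linearly in a parameter $t\in[1,n]_\bF$ and define $\mu(M)$ as the unique zero, so that Proposition~\ref{prop:red-path-criterion} supplies the see-saw property and Lemma~\ref{lem:HN-recognition} supplies induction. Concretely, for $t\in[i,i+1]_\bF$ with $1\leq i<n$ put
\begin{equation*}
  F_t=(i+1-t)\,\theta_i+(t-i)\,\theta_{i+1},
\end{equation*}
which is well defined at integer $t$ since both formulas give $F_i=\theta_i$. For every simple $S$ the map $t\mapsto F_t([S])$ is strictly decreasing on $[1,n]_\bF$, because on each segment its slope is $\theta_{i+1}([S])-\theta_i([S])<0$. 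Hence for nonzero $M$ the map $t\mapsto F_t(M)=\sum_Sm_S(M)F_t([S])$ is also strictly decreasing and affine on each segment, since the multiplicities are nonnegative and not all zero.

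The first main step is to show that each nonzero $M$ has a zero $\mu(M)\in[1,n]_\bF$. By strict monotonicity, this is equivalent to $F_1(M)=\theta_1(M)\geq0$ and $F_n(M)=\theta_n(M)\leq0$. Once these hold, there is a segment $[i,i+1]$ with $F_i(M)\geq0\geq F_{i+1}(M)$, and the affine zero $t=i+F_i(M)/(F_i(M)-F_{i+1}(M))$ lies in $\bF$. We then obtain the sign pattern required by Proposition~\ref{prop:red-path-criterion}, so $\mu$ is a stability function.

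To prove $\theta_1(M)\geq0$, use the HN filtration of $M$. Its factors $A_j\in\Pi_{\varphi_{k_j}}$ satisfy $\theta_{k_j}(A_j)=0$, and $\theta_1\geq\theta_{k_j}$ on simples with nonnegative multiplicities. Hence $\theta_1(A_j)\geq\theta_{k_j}(A_j)=0$, and summing over the factors gives $\theta_1(M)\geq0$. The same argument with $\theta_n\leq\theta_{k_j}$ gives $\theta_n(M)\leq0$. This HN argument is the one place where the stability data axioms enter, and I expect it to be the crux, though it is short.

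Finally, for $0\neq E\in\Pi_{\varphi_i}$ we have $F_i(E)=\theta_i(E)=0$, so $\mu(E)=i$ by uniqueness of the zero. These values $\lambda_{\varphi_i}=i$ strictly increase with $i$, so Lemma~\ref{lem:HN-recognition} shows that $\mu$ induces $(\Phi,\{\Pi_\varphi\}_{\varphi\in\Phi})$. The degenerate case $n=1$ is consistent: then $F_1=\theta_1$ vanishes on all of $K_0(\cA)$, since every object lies in $\Pi_{\varphi_1}$, and $\mu\equiv1$.
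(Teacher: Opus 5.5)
Your proposal is correct and follows the paper's proof: the same piecewise-linear $F_t$, the unique zero via Proposition~\ref{prop:red-path-criterion}, and induction via Lemma~\ref{lem:HN-recognition}, with $n=1$ handled separately. The only minor difference is how you get $\theta_1(M)\geq0\geq\theta_n(M)$: you use the HN factors of $M$, whereas the paper notes that each ambient simple $S$ lies in some $\Pi_{\varphi_j}$, so $\theta_1(S)\geq\theta_j(S)=0\geq\theta_n(S)$, and then sums over a composition series.
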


\begin{proof}
  For $n=1$, take $\mu\equiv1$ and apply Lemma~\ref{lem:HN-recognition}. Assume $n>1$. Set $I=[1,n]_\bF$ and define, for $t\in I$,
  \begin{equation}\label{eq:decreasing-form-interpolation}
    F_t=(i+1-t)\theta_i+(t-i)\theta_{i+1} \qquad(i\leq t\leq i+1,\ 1\leq i<n).
  \end{equation}
  \begin{claim}
    For every nonzero $M\in\cA$, the function $t\mapsto F_t(M)$ is strictly decreasing on $I$, and $F_1(M)\geq0\geq F_n(M)$.
  \end{claim}

  \begin{proof}[Proof of the claim]
    For $S\in\operatorname{Sim}\cA$, choose $j$ with $S\in\Pi_{\varphi_j}$. Then $\theta_1(S)\geq\theta_j(S)=0\geq\theta_n(S)$. For $M\ne0$, choose a composition series $0=M_0\subsetneq M_1\subsetneq\cdots\subsetneq M_r=M$ and put $S_a=M_a/M_{a-1}$. Then $r>0$ and $[M]=\sum_{a=1}^r[S_a]$, so $\theta_1(M)\geq0\geq\theta_n(M)$ and
    \begin{equation}
      \theta_i(M)-\theta_{i+1}(M) =\sum_{a=1}^r\bigl(\theta_i(S_a)-\theta_{i+1}(S_a)\bigr)>0 \quad(1\leq i<n).
    \end{equation}
    Therefore $t\mapsto F_t(M)$ is strictly decreasing when $M$ is fixed.
  \end{proof}

  For $M\ne0$, if $\theta_1(M)=0$, set $\mu(M)=1$. Otherwise, the claim ensures that there is a unique integer $1\leq i<n$ with $\theta_i(M)>0\geq\theta_{i+1}(M)$; define
  \begin{equation}\label{eq:small-phase-uniform-function}
    \mu(M)=i+\frac{\theta_i(M)}{\theta_i(M)-\theta_{i+1}(M)}.
  \end{equation}
  In either case, $\mu(M)\in I$ and $F_{\mu(M)}(M)=0$. By the claim, this zero is unique and the sign conditions of Proposition~\ref{prop:red-path-criterion} hold. Hence $\mu$ is a stability function. Finally, $F_i(E)=\theta_i(E)=0$ for $0\ne E\in\Pi_{\varphi_i}$, hence $\mu(E)=i$. Lemma~\ref{lem:HN-recognition} shows that $\mu$ induces $(\Phi,\{\Pi_\varphi\}_{\varphi\in\Phi})$.
\end{proof}

\begingroup
\begin{lemma}\label{lem:finite-chain-local-signing}
  Suppose condition~(2) of Theorem~\ref{thm:positive-class-criterion} holds. Then every finite subset $\mathscr F\subseteq\mathscr E$ admits an admissible signing.
\end{lemma}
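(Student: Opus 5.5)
Given a finite $\mathscr F\subseteq\mathscr E$, the plan is to build a stability function that induces the stability data restricted to the finitely many phases visible in $\mathscr F$. Its signing will then be admissible on $\mathscr F$. We cannot simply invoke Lemma~\ref{lem:decreasing-cuts-induction} for all of $\Phi$, since $\Phi$ may be infinite. Instead, let $V_{\mathscr F}$ be the finite set of objects occurring in $\mathscr F$, and let $\Phi_{\mathscr F}=\{\varphi_1<\cdots<\varphi_n\}$ be the set of phases $\phi(X)$ for the semistable $X\in V_{\mathscr F}$; if this set is empty, add one arbitrary phase. Let $\Sigma$ be the finite set of simples occurring as composition factors of objects in $V_{\mathscr F}$. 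Condition~(2) for the chain $\varphi_1<\cdots<\varphi_n$, restricted to the subspaces $H_{\varphi_i}\cap\bigoplus_{S\in\Sigma}\bQ[S]$, feeds into Proposition~\ref{lem:finite-dimensional-separation}. That proposition yields $\bQ$-linear maps $\theta_1,\ldots,\theta_n$ on $\bigoplus_{S\in\Sigma}\bQ[S]$ that vanish on these intersections and satisfy $\theta_i([S])-\theta_{i+1}([S])\geq1$ for $S\in\Sigma$.

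Next, define the interpolation $F_t$ as in \eqref{eq:decreasing-form-interpolation} on $[1,n]_\bQ$, evaluated only on classes supported in $\Sigma$. For $X\in V_{\mathscr F}$ the function $t\mapsto F_t(X)$ is strictly decreasing, but the sign condition $F_1(X)\geq0\geq F_n(X)$ used in the Claim of Lemma~\ref{lem:decreasing-cuts-induction} may fail, because the simples in $\Sigma$ need not lie in the chosen phases. I will fix this by extending the parameter range: extend $F_t$ linearly beyond $[1,n]$, i.e.\ use $\theta_1+(1-t)(\theta_1-\theta_2)$ for $t<1$ and the analogous formula for $t>n$, with $t$ ranging over all of $\bQ$. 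Since differences are $\geq1$ on every simple of $\Sigma$, each $F_t(X)$ is a strictly decreasing affine-on-pieces function going to $\pm\infty$, so it has a unique zero $\mu(X)\in\bQ$. Then define $\varepsilon(A,B,C)$ by comparing $\mu(A),\mu(B),\mu(C)$. The argument of Proposition~\ref{prop:red-path-criterion}, applied only to the triples in $\mathscr F$ (additivity $[B]=[A]+[C]$ holds), shows that exactly one of the three patterns holds. So $\varepsilon$ is a well-defined signing and every arrow points to a strictly larger $\mu$-value.

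It remains to check that the same-phase identifications and the phase arrows in $Q_{\mathscr F,\varepsilon}$ are compatible with $\mu$. For semistable $X\in V_{\mathscr F}$ of phase $\varphi_i$, we have $[X]\in H_{\varphi_i}\cap\bigoplus_{S\in\Sigma}\bQ[S]$, so $F_i(X)=\theta_i(X)=0$ and $\mu(X)=i$. Hence same-phase objects get equal values and phase order gives strict inequalities. The relations of $\sim_\varepsilon$ therefore preserve $\mu$, every arrow strictly increases $\mu$, and a directed cycle would force $\mu(X)<\mu(X)$. Thus $\varepsilon$ is admissible on $\mathscr F$.

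The main obstacle is the step where the relevant vanishing subspaces are chosen. The phases attached to objects of $V_{\mathscr F}$ must be exactly the $\varphi_i$, and the spans must be intersected with the finite coordinate space so that Proposition~\ref{lem:finite-dimensional-separation} applies while hypothesis~(2) is still inherited. Here one notes that a solution of the flow system in $\bigoplus_{S\in\Sigma}\bQ[S]$ with differences in the intersected spaces is also a solution in $K_0(\cA)_\bQ$, so it must vanish. The extension of $F_t$ outside $[1,n]$ is a secondary point to handle carefully.
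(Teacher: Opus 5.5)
Your proof is correct and follows the paper's route. You restrict to the finitely many phases and simples seen by $\mathscr F$, apply Proposition~\ref{lem:finite-dimensional-separation}, interpolate, and sign each triple by comparing the unique roots; the only difference is that the paper appends $t_0=c\ell$ and $t_{n+1}=-c\ell$ to force a sign change on a bounded interval, while you extend the end slopes linearly to all of $\bQ$.

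That extension needs $n\geq2$. When $n=1$, including the case where you add an arbitrary phase, there is no $\theta_2$. Take $F_t=\theta_1-(t-1)\ell$ instead, or simply sign every triple $0$, which is admissible because no arrows arise.
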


\begin{proof}
  If $\mathscr F=\varnothing$, its unique signing is admissible. Assume $\mathscr F\ne\varnothing$, and let $V_{\mathscr F}$ be the finite set of nonzero objects occurring in its triples, represented in the fixed skeleton as in Definition~\ref{def:signing}. Put
  \begin{equation}
    \Phi_{\mathscr F}=\{\varphi\in\Phi:V_{\mathscr F}\cap\Pi_\varphi\ne\varnothing\}.
  \end{equation}

  If $\Phi_{\mathscr F}=\varnothing$, assign sign $0$ to every triple in $\mathscr F$. This signing is admissible trivially. Assume now that $\Phi_{\mathscr F}\ne\varnothing$, and write $\Phi_{\mathscr F}=\{\varphi_1<\cdots<\varphi_{\revision{n}}\}$, where $\revision{n}\geq1$. Define
  \begin{equation}
    \Sigma=\bigcup_{X\in V_{\mathscr F}}\operatorname{supp}([X]),\qquad
    W_\Sigma=\bigoplus_{S\in\Sigma}\bQ[S],\qquad
    W_{\Sigma,+}=\bigoplus_{S\in\Sigma}\bQ_{\geq0}[S].
  \end{equation}
  Since $V_{\mathscr F}$ is finite and every $[X]$ has finite support, $\Sigma$ is finite. For $1\leq i\leq\revision{n}$, put
  \begin{equation}
    L_i=\operatorname{span}_\bQ\{[E]:E\in V_{\mathscr F}\cap\Pi_{\varphi_i}\}
    \subseteq H_{\varphi_i}\cap W_\Sigma.
  \end{equation}
  The inclusions $L_i\subseteq H_{\varphi_i}$ and the hypothesis give condition~(1) of Proposition~\ref{lem:finite-dimensional-separation} for these subspaces of $W_\Sigma$. Hence there are $t_i\in\operatorname{Hom}_\bQ(W_\Sigma,\bQ)$ with $t_i(L_i)=0$ and $t_i([S])>t_{i+1}([S])$ for $1\leq i<\revision{n}$, $S\in\Sigma$.

  Choose $c\in\bQ_{>0}$ such that $c>|t_i([S])|$ for $1\leq i\leq\revision{n}$, $S\in\Sigma$, and put $t_0=c\ell|_{W_\Sigma}$ and $t_{\revision{n+1}}=-c\ell|_{W_\Sigma}$. Thus $t_i([S])>t_{i+1}([S])$ for $0\leq i\leq\revision{n}$, $S\in\Sigma$. Define
  \begin{equation}\label{eq:local-signing-interpolation}
    F_s=(i+1-s)t_i+(s-i)t_{i+1}
    \quad(i\leq s\leq i+1,\ 0\leq i\leq\revision{n}),
    \qquad s\in[0,\revision{n+1}]_\bQ.
  \end{equation}
  For $\mathbf v\in W_{\Sigma,+}\setminus\{\mathbf0\}$, we have $t_0(\mathbf v)>0>t_{\revision{n+1}}(\mathbf v)$ and $t_i(\mathbf v)>t_{i+1}(\mathbf v)$. Hence there is a unique $0\leq i\leq\revision{n}$ with $t_i(\mathbf v)>0\geq t_{i+1}(\mathbf v)$. Set
  \begin{equation}\label{eq:local-signing-root}
    r(\mathbf v)=i+\frac{t_i(\mathbf v)}{t_i(\mathbf v)-t_{i+1}(\mathbf v)}.
  \end{equation}
  Then $F_{r(\mathbf v)}(\mathbf v)=0$ and $F_s(\mathbf v)$ is strictly decreasing in $s$. \revision{For $\mathbf v,\mathbf w\in W_{\Sigma,+}\setminus\{\mathbf0\}$, put $s=r(\mathbf v+\mathbf w)$. Additivity gives
  \begin{equation}
    F_s(\mathbf v)+F_s(\mathbf w)=F_s(\mathbf v+\mathbf w)=0.
  \end{equation}
  The strict sign change at each root gives strict see-saw under addition, by the same argument as in Proposition~\ref{prop:red-path-criterion}.}

  For $(A,B,C)\in\mathscr F$, let $\varepsilon(A,B,C)\in\{+,0,-\}$ be the sign of $r([C])-r([A])$. If $E\in V_{\mathscr F}\cap\Pi_{\varphi_i}$, then $t_i([E])=0$, so $r([E])=i$. Therefore $X\mapsto r([X])$ descends to $V_{\mathscr F}/\sim_\varepsilon$ and strictly increases along each arrow of $Q_{\mathscr F,\varepsilon}$. Hence $\varepsilon$ is admissible.
\end{proof}
\par\endgroup

\begin{proof}[Proof of Theorem~\ref{thm:positive-class-criterion}]
  Lemma~\ref{lem:positive-class-obstruction} proves (1)$\Rightarrow$(2), and Lemma~\ref{lem:positive-class-homomorphisms} proves (2)$\Rightarrow$(3). \red{Lemma~\ref{lem:finite-chain-local-signing} and Proposition~\ref{prop:reduction to finite case} prove (2)$\Rightarrow$(1).}

  \begingroup
  For (3)$\Rightarrow$(2), fix a finite chain $\varphi_1<\cdots<\varphi_n$ and put $H_i=H_{\varphi_i}$, $\theta_i=\theta_{\varphi_i}$. If $\mathbf x_0,\ldots,\mathbf x_n$ satisfy \eqref{eq:positive-class-flow}, then
  \begin{equation}\label{eq:finite-chain-separator-duality}
    \sum_{i=1}^{n-1}(\theta_i-\theta_{i+1})(\mathbf x_i)
    =\sum_{i=1}^{n}\theta_i(\mathbf x_i-\mathbf x_{i-1})=0.
  \end{equation}
  Since $(\theta_i-\theta_{i+1})(\mathbf x)>0$ for $\mathbf x\in K_0(\cA)_{\bQ_{\geq0}}\setminus\{\mathbf0\}$, we obtain $\mathbf x_0=\cdots=\mathbf x_n=\mathbf0$.
  \par\endgroup

\end{proof}

\begingroup
\begin{corollary}\label{cor:finite-phase-integral-separators}
  Suppose the equivalent conditions of Theorem~\ref{thm:positive-class-criterion} hold, $\Phi=\{\varphi_1<\cdots<\varphi_n\}$ is finite, and $K_0(\cA)$ has finite rank. Then the homomorphisms in condition~(3) can be chosen as $\theta_i:=\theta_{\varphi_i}:K_0(\cA)\to\bZ$, with
  \begin{equation}
    \theta_i([S])-\theta_{i+1}([S])\geq1
    \qquad(S\in\operatorname{Sim}\cA,\ 1\leq i<n).
  \end{equation}
  Moreover, there is a stability function $\mu:\operatorname{Obj}^*(\cA)\to\bQ$ \revision{inducing $(\Phi,\{\Pi_\varphi\}_{\varphi\in\Phi})$.}
\end{corollary}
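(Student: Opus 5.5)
Since $K_0(\cA)$ has finite rank $d$, the set $\operatorname{Sim}\cA$ is finite, say $\{S_1,\ldots,S_d\}$, and $K_0(\cA)_\bQ$ is identified with $\bQ^d$ by taking the simple classes as a coordinate basis. The plan is to apply Proposition~\ref{lem:finite-dimensional-separation} once, to the whole finite system, and then to feed its output into Lemma~\ref{lem:decreasing-cuts-induction} with $\bF=\bQ$.

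First, since condition~(2) of Theorem~\ref{thm:positive-class-criterion} holds, apply it to the full chain $\varphi_1<\cdots<\varphi_n$ with $H_i=H_{\varphi_i}\subseteq\bQ^d$. This is exactly condition~(1) of Proposition~\ref{lem:finite-dimensional-separation}, since $K_0(\cA)_{\bQ_{\geq0}}$ corresponds to $\bQ_{\geq0}^d$. Each $H_i$ is a finite-dimensional subspace of $\bQ^d$, so it has a finite basis and the system \eqref{eq:rational-cut-system} is finite. The proposition therefore yields $\bQ$-linear maps $\theta_i$ vanishing on $H_i$ with integer coefficients $\theta_i(\mathbf e_j)\in\bZ$ and $\theta_i(\mathbf e_j)-\theta_{i+1}(\mathbf e_j)\geq1$.

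Restricting these maps to $K_0(\cA)=\bigoplus_j\bZ[S_j]$ gives group homomorphisms $K_0(\cA)\to\bZ$. They vanish on the classes $[E]$ with $E\in\Pi_{\varphi_i}$, because these classes lie in $H_i$. By transitivity, $\theta_i([S])>\theta_j([S])$ for all $i<j$, so condition~(3) holds with integer-valued separators satisfying the stated gap inequality. The only subtlety to check is that nothing infinite remains: the case $d=0$ or $n=1$ is covered by $\theta_i=0$, exactly as in the proposition.

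For the last assertion, apply Lemma~\ref{lem:decreasing-cuts-induction} with $\bF=\bQ$ and these $\theta_i$, viewed as $\bQ$-valued. This produces a stability function $\mu:\operatorname{Obj}^*(\cA)\to[1,n]_\bQ\subseteq\bQ$ inducing the stability data. Formula \eqref{eq:small-phase-uniform-function} visibly gives rational values, since all $\theta_i(M)$ are integers. I expect no real obstacle. The main point is that finite rank makes the global system finite, so the ultrapower step of Lemma~\ref{lem:rational-system-compactness} is unnecessary and the field can be taken to be $\bQ$.
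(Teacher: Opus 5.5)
Your proposal is correct and follows the paper's own proof: apply condition~(2) to the full phase chain, use Proposition~\ref{lem:finite-dimensional-separation} in $\bQ^d$ to get integer-valued separators with gaps at least $1$, and then apply Lemma~\ref{lem:decreasing-cuts-induction} with $\bF=\bQ$. Your remark that finite rank makes the system finite, so the ultrapower step of Lemma~\ref{lem:rational-system-compactness} is not needed, is exactly why the field can be taken to be $\bQ$.
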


\begin{proof}
  Put $d=\operatorname{rank}K_0(\cA)$ and $H_i=H_{\varphi_i}$. Identify the simple classes with the standard basis of $\bZ^d$. Condition~(2) of Theorem~\ref{thm:positive-class-criterion}, applied to the full phase chain, and Proposition~\ref{lem:finite-dimensional-separation} give the integer-valued homomorphisms with the stated inequalities. Lemma~\ref{lem:decreasing-cuts-induction}, applied with $\bF=\bQ$, then gives the required stability function.
\end{proof}
\par\endgroup

\begingroup
\begin{remark}\label{rem:finite-chain-criterion}
  We note that Theorem~\ref{thm:positive-class-criterion} does not assume that the semistable subcategories are wide.

  If $\Phi=\{\varphi_1<\cdots<\varphi_{\revision{n}}\}$ is finite, it suffices to check condition~(2) on the full phase chain: any solution for a subchain extends to one for the full chain by inserting zero increments. Thus the condition reduces to the system \eqref{eq:positive-class-flow} \revision{for the full phase chain, with} $H_i=H_{\varphi_i}$. If $K_0(\cA)$ also has finite rank, Corollary~\ref{cor:finite-phase-integral-separators} gives integer-valued homomorphisms satisfying condition~(3) and a rational-valued inducing stability function. \revision{Finiteness of $\Phi$ and finite rank of $K_0(\cA)$ are not required for the equivalence in Theorem~\ref{thm:positive-class-criterion}.}

  In general, \revision{condition~(2) shows that $(\Phi,\{\Pi_\varphi\}_{\varphi\in\Phi})$ is not induced by any stability function} if and only if there exist a finite chain $\varphi_1<\cdots<\varphi_n$ in $\Phi$, finite sets $\mathcal T_i\subseteq\Pi_{\varphi_i}$ and $\Sigma\subseteq\operatorname{Sim}\cA$ with $\operatorname{supp}([E])\subseteq\Sigma$ for $E\in\bigcup_i\mathcal T_i$, and $\mathbf x_0,\ldots,\mathbf x_n$ satisfying
  \begin{equation}
    \left\{\begin{aligned}
      \mathbf x_0=\mathbf x_n&=\mathbf0,\qquad (\mathbf x_0,\ldots,\mathbf x_n)\ne(\mathbf0,\ldots,\mathbf0),\\
      \mathbf x_i&\in\bigoplus_{S\in\Sigma}\bQ_{\geq0}[S] &&(1\leq i<n),\\
      \mathbf x_i-\mathbf x_{i-1}&\in\operatorname{span}_\bQ\{[E]:E\in\mathcal T_i\} &&(1\leq i\leq n).
    \end{aligned}\right.
  \end{equation}
\end{remark}
\par\endgroup

\section{Wide semistable subcategories and phase bounds}\label{sec:wide-phase-bounds}

\red{Throughout this section, fix $\Phi=\{\varphi_1<\cdots<\varphi_n\}$ and put $H_i=H_{\varphi_i}$.} We apply Theorem~\ref{thm:positive-class-criterion} to stability data whose semistable subcategories are wide. For at most six phases, we construct homomorphisms satisfying condition~(3). A seven-phase example shows that wideness alone does not ensure the existence of an inducing stability function. We retain the notation in \eqref{eq:phase-class-spaces}.

\revision{The six-phase bound comes from three parts of the construction. Lemma~\ref{lem:adjacent-numerical-cuts} handles at most two central phases simultaneously, and Lemma~\ref{lem:two-fringe-extension} extends the resulting homomorphisms across at most two phases on either side. Thus the proof covers $2+2+2=6$ phases. The construction uses $-f(X)\geq1$ whenever $f(X)<0$, and $g(X)\geq1$ whenever $g(X)>0$. These inequalities follow from the integer values and remain valid when there are infinitely many ambient simples.}

\subsection{Homomorphisms for adjacent phases}

Throughout this subsection, fix a stability data $(\Phi,\{\Pi_\varphi\}_{\varphi\in\Phi})$ on $\cA$, where $\Phi=\{\varphi_1<\cdots<\varphi_n\}$ and every $\Pi_\varphi$ is wide.

The semistable subcategories at the extreme phases are Serre, which will control subobjects and quotients near the boundary phases.

\begin{lemma}\label{lem:extreme slices are Serre}
  Let $(\Phi,\{\Pi_\varphi\}_{\varphi\in\Phi})$ be a system of stability data on $\cA$, and suppose that every $\Pi_\varphi$ is wide. If $\alpha$ is the minimal element of $\Phi$, then $\Pi_\alpha$ is Serre. Dually, if $\beta$ is the maximal element of $\Phi$, then $\Pi_\beta$ is Serre.
\end{lemma}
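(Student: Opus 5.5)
We treat the minimal phase $\alpha$; the statement for $\beta$ then follows by passing to $\cA^{\mathrm{op}}$, which reverses the phase order and turns the stability data into one whose minimal phase is $\beta$. Since $\Pi_\alpha$ is wide, it is already closed under extensions. It remains to show that $\Pi_\alpha$ is closed under subobjects and quotients in $\cA$.

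Quotients are immediate. Let $X\in\Pi_\alpha$ be nonzero and let $X\to Q$ be an epimorphism with $Q\ne0$. Lemma~\ref{lem:HN-phase-bounds}(1) gives $\phi^-(Q)\geq\phi^-(X)=\alpha$. Consider the last HN factor $Q\to Q/Q_{r-1}$. The composite $X\to Q\to Q/Q_{r-1}$ is nonzero and lands in $\Pi_{\phi^-(Q)}$. If $\phi^-(Q)>\alpha$, the $\hom$-vanishing axiom (read from $\Pi_\alpha$ into a larger phase) does not forbid this, so we must argue differently. Instead, we bound $\phi^+(Q)$. Hom vanishing gives $\hom(\Pi_\psi,\Pi_\alpha)=0$ for $\psi>\alpha$, and this is the direction we need.

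The argument for subobjects is the cleanest and comes first. Let $0\ne U\subseteq X$ with $X\in\Pi_\alpha$. Lemma~\ref{lem:HN-phase-bounds}(1) gives $\phi^+(U)\leq\alpha$. Since $\alpha$ is minimal in $\Phi$, every HN phase of $U$ is at least $\alpha$. Hence $U$ has a single HN factor of phase $\alpha$, so $U\in\Pi_\alpha$.

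For quotients $Q$ of $X$, we use wideness. Write $X\to Q$ with kernel $U$. If $U=0$, then $Q\simeq X$ and we are done. Otherwise $U\in\Pi_\alpha$ by the previous paragraph. The inclusion $U\to X$ is then a morphism in $\Pi_\alpha$, and wideness gives that its cokernel $Q$ lies in $\Pi_\alpha$. This avoids the HN analysis attempted above entirely.

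The only subtle point is therefore the subobject step, namely that minimality of $\alpha$ forces $\phi^-(U)\geq\alpha$ trivially. With that in hand, wideness converts closure under subobjects into closure under quotients. For $\beta$, the dual argument runs as follows. Quotients $Q$ of $X\in\Pi_\beta$ satisfy $\phi^-(Q)\geq\beta$, which is maximal, so $Q\in\Pi_\beta$. Kernels of the quotient maps then lie in $\Pi_\beta$ by wideness.
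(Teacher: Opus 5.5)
Your proposal is correct and follows essentially the paper's proof: Lemma~\ref{lem:HN-phase-bounds}(1) together with minimality of $\alpha$ forces every nonzero subobject of $X\in\Pi_\alpha$ into $\Pi_\alpha$, wideness then puts each quotient $X/U$ in $\Pi_\alpha$ as a cokernel of the inclusion, and the case of $\beta$ is dual. You should delete your abandoned second paragraph, since its opening claim that ``quotients are immediate'' is never justified there and the wideness argument replaces it.
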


\begin{proof}
  Let $\alpha$ be minimal and let $X\in\Pi_\alpha$. For $0\ne U\subsetneq X$, Lemma~\ref{lem:HN-phase-bounds}(1) gives $\alpha\leq\phi^-(U)\leq\phi^+(U)\leq\alpha$, so $U\in\Pi_\alpha$. The cases $U=0$ and $U=X$ are immediate. Since $\Pi_\alpha$ is wide, the cokernel $X/U$ also belongs to $\Pi_\alpha$. Thus $\Pi_\alpha$ is Serre. The assertion for a maximal phase is dual.
\end{proof}

We use this property to bound the lengths of simple objects in the semistable subcategories at the second and penultimate phases.

\begin{lemma}\label{lem:fringe-simple-quotients}
  Let $(\Phi,\{\Pi_\varphi\}_{\varphi\in\Phi})$ be a stability data on $\cA$, where $\Phi=\{\varphi_1<\cdots<\varphi_n\}$ and $n\geq3$. Assume that every $\Pi_\varphi$ is wide.
  \begin{enumerate}
    \item\label{item:fringe-simple-quotients} If $T$ is simple in $\Pi_{\varphi_2}$ but not ambient simple, then there is a nonsplit exact sequence
    \begin{equation}\label{eq:fringe-simple-head}
      0\longrightarrow K\longrightarrow T\longrightarrow S \longrightarrow0,
    \end{equation}
    where $K\in\Pi_{\varphi_1}$ and $S$ is ambient simple in $\Pi_{\varphi_j}$ for some $2<j\leq n$. Every nonzero quotient of $T$ in $\cA$ is simple in a semistable subcategory. In particular, $\ell(T)\leq j-1$.
    \item\label{item:fringe-simple-subobjects} Dually, if $T$ is simple in $\Pi_{\varphi_{n-1}}$ but not ambient simple, then there is a nonsplit exact sequence
    \begin{equation}
      0\longrightarrow L\longrightarrow T\longrightarrow Q \longrightarrow0,
    \end{equation}
    where $Q\in\Pi_{\varphi_n}$ and $L$ is ambient simple in $\Pi_{\varphi_j}$ for some $1\leq j<n-1$. Every nonzero subobject of $T$ in $\cA$ is simple in a semistable subcategory. In particular, $\ell(T)\leq n-j$.
  \end{enumerate}
\end{lemma}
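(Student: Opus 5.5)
I will prove (\ref{item:fringe-simple-quotients}); part (\ref{item:fringe-simple-subobjects}) then follows by passing to $\cA^{\mathrm{op}}$, which reverses the phase order and exchanges subobjects with quotients.

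\emph{Step 1: the sequence \eqref{eq:fringe-simple-head}.} Let $T$ be simple in $\Pi_{\varphi_2}$ but not ambient simple. Choose an ambient simple quotient $T\to S$ and let $K$ be its kernel, so $0\neq K\subsetneq T$. By Lemma~\ref{lem:simple-slice-phase-bounds}(\ref{item:simple-slice-subobjects}), every HN phase of $K$ is strictly below $\varphi_2$, hence equal to $\varphi_1$, so $K\in\Pi_{\varphi_1}$. By Lemma~\ref{lem:simple-slice-phase-bounds}(\ref{item:simple-slice-quotients}), every HN phase of the quotient $S$ is strictly above $\varphi_2$. An ambient simple object is semistable, since its HN filtration has length one. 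Hence $S\in\Pi_{\varphi_j}$ for some $j>2$. If the sequence split, $K$ would be a quotient of $T$ with phase $\varphi_1<\varphi_2$, contradicting Lemma~\ref{lem:simple-slice-phase-bounds}(\ref{item:simple-slice-quotients}).

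\emph{Step 2: nonzero quotients are simple in semistable subcategories.} Let $T\to Q$ be a nonzero quotient. If $Q\cong T$, it is simple in $\Pi_{\varphi_2}$. Otherwise all HN phases of $Q$ exceed $\varphi_2$. I would argue by induction on $\ell(Q)$, or directly as follows. Let $Q\to Q''$ be the minimal semistable quotient of $Q$, of phase $\phi^-(Q)>\varphi_2$, and let $Q'$ be the kernel of the composite $T\to Q''$. Then $Q'$ is a proper subobject of $T$, so $Q'\in\Pi_{\varphi_1}$ as in Step 1; note that $Q'$ may be zero if $Q''\cong T$, but that is excluded since $\phi(T)=\varphi_2$. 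The key claim is that $Q''$ is simple in its semistable subcategory $\Pi_{\varphi_k}$, where $k>2$. Suppose $0\neq V\subsetneq Q''$ with $V\in\Pi_{\varphi_k}$, and let $W$ be its preimage in $T$, so that $0\to Q'\to W\to V\to 0$ with $W\subsetneq T$. Then $W\in\Pi_{\varphi_1}$ by Step 1's argument. But $V$ is a nonzero quotient of $W$, and $\Pi_{\varphi_1}$ is Serre by Lemma~\ref{lem:extreme slices are Serre}, so $V\in\Pi_{\varphi_1}$. This contradicts $V\in\Pi_{\varphi_k}$ with $k>2$. Hence $Q''$ is simple in $\Pi_{\varphi_k}$.

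The remaining point, and the main obstacle, is to show $Q=Q''$, that is, that $Q$ is semistable. Suppose $Q\neq Q''$, so $Q$ has at least two HN factors. Let $Q_1\subseteq Q$ be its first HN factor and $R$ the preimage of $Q_1$ in $T$. Since $Q_1$ is a proper subobject of $Q$, the object $R$ is proper in $T$, so $R\in\Pi_{\varphi_1}$ by Step 1's argument. Then $Q_1$ is a nonzero quotient of $R$, and the Serre property of $\Pi_{\varphi_1}$ forces $Q_1\in\Pi_{\varphi_1}$. This contradicts the fact that all phases of $Q$ exceed $\varphi_2$. So $Q$ is semistable and equals $Q''$, which is simple in a semistable subcategory.

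\emph{Step 3: the length bound.} Applying Steps 1 and 2 iteratively, I expect to build a chain of quotients $T=Q_0\twoheadrightarrow Q_1\twoheadrightarrow\cdots\twoheadrightarrow Q_r=S$, each proper. Each $Q_a$ with $a\geq 1$ is simple in some $\Pi_{\varphi_{k_a}}$. Whenever $Q_{a+1}$ is a proper quotient of $Q_a$, Lemma~\ref{lem:simple-slice-phase-bounds}(\ref{item:simple-slice-quotients}) gives $k_{a+1}>k_a$. Starting from phase index $2$ and ending at index $j$ for $S$, and peeling off one composition factor at a time along a composition series of $T$ ending in $S$, the indices $2=k_0<k_1<\cdots<k_{\ell(T)-1}=j$ are strictly increasing. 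This gives $\ell(T)-1\leq j-2$, that is, $\ell(T)\leq j-1$. The delicate part is choosing the composition series compatibly with these quotients; I would take a composition series of $T$ whose top factor is $S$, so that its successive quotients $T/T_a$ are exactly the $Q_a$.
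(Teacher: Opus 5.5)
Your proposal is correct and follows the paper's proof essentially step by step. You take $K=\ker(T\to S)$ via Lemma~\ref{lem:simple-slice-phase-bounds}, use the same preimage argument with the Serre property of $\Pi_{\varphi_1}$ to show every nonzero quotient of $T$ is semistable and simple in its semistable subcategory, and bound the length by the same strictly increasing phases of the quotients $T/T_a$; proving nonsplitness via quotient phases rather than Hom-vanishing is an immaterial variation, and the ``delicate'' point in Step~3 is not delicate, since a composition series refining $0\subset K\subset T$ (as in the paper) suffices.
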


\begin{proof}
  We prove (\ref{item:fringe-simple-quotients}); (\ref{item:fringe-simple-subobjects}) follows by passing to $\cA^{\mathrm{op}}$ and reversing the phase order.

  Choose an ambient simple quotient $p:T\to S$, with $S\in\Pi_{\varphi_j}$, and put $K=\ker p$. Since $T$ is not ambient simple, $K\ne0$. Lemma~\ref{lem:simple-slice-phase-bounds}(\ref{item:simple-slice-subobjects})--(\ref{item:simple-slice-quotients}) gives $j>2$ and $K\in\Pi_{\varphi_1}$. The sequence \eqref{eq:fringe-simple-head} is nonsplit, since a section would give a nonzero map $S\to T$, contrary to Hom-vanishing.

  Every proper subobject $u:U\hookrightarrow T$ lies in $\Pi_{\varphi_1}$ by Lemma~\ref{lem:simple-slice-phase-bounds}(\ref{item:simple-slice-subobjects}). This semistable subcategory is Serre by Lemma~\ref{lem:extreme slices are Serre}, so $\im(pu)\in\Pi_{\varphi_1}$. Since $S$ is ambient simple and $S\notin\Pi_{\varphi_1}$, we have $pu=0$. Thus $u$ factors through the kernel $K\hookrightarrow T$. Hence $K$ is the unique maximal subobject of $T$ in $\cA$, and $S$ is its simple top in $\cA$.

  \begin{claim}
    Every nonzero quotient $Q$ of $T$ is semistable and simple in its semistable subcategory. Moreover, every proper subobject of $Q$ belongs to $\Pi_{\varphi_1}$.
  \end{claim}

  \begin{proof}[Proof of the claim]
    Let $q:T\twoheadrightarrow Q$ be a nonzero quotient and $v:V\hookrightarrow Q$ a proper subobject. Form the pullback $U=T\times_Q V$ and put $N=\ker q$. This gives a commutative diagram with exact rows, whose right-hand square is a pullback:
    \begin{equation}
      \begin{tikzcd}[column sep=large,row sep=large]
        0\arrow[r]&N\arrow[r]\arrow[d,equal] &U\arrow[r,two heads]\arrow[d,hook] &V\arrow[r]\arrow[d,hook,"v"]&0\\
        0\arrow[r]&N\arrow[r] &T\arrow[r,two heads,"q"]&Q\arrow[r]&0.
      \end{tikzcd}
    \end{equation}
    The induced monomorphism $U\hookrightarrow T$ is proper, so $U\in\Pi_{\varphi_1}$. Since $U\twoheadrightarrow V$ is epic and $\Pi_{\varphi_1}$ is Serre, we also have $V\in\Pi_{\varphi_1}$.

    Let $0\ne F\subseteq Q$ be its maximal semistable subobject. Since $Q$ is a nonzero quotient of $T$, Lemma~\ref{lem:HN-phase-bounds}(1) gives $\phi^-(Q)\geq\phi(T)=\varphi_2$ (the isomorphism case is immediate). The phase of $F$ is therefore
    \begin{equation}
      \phi(F)=\phi^+(Q)\geq\phi^-(Q)\geq\varphi_2>\varphi_1.
    \end{equation}
    If $F\subsetneq Q$, the preceding pullback argument, applied with $V=F$, gives $F\in\Pi_{\varphi_1}$, hence $\phi(F)=\varphi_1$, contradicting $\phi(F)>\varphi_1$. Thus $F=Q$. Every proper subobject of $Q$ lies in $\Pi_{\varphi_1}$, whereas $Q$ has phase greater than $\varphi_1$; hence $Q$ is simple in its semistable subcategory.
  \end{proof}

  Choose a composition series $0=K_0\subset K_1\subset\cdots\subset K_m=K$ in $\cA$. Since $K_a\subseteq K\subsetneq T$, each $T/K_a$ is a nonzero quotient of $T$. According to the claim, it is semistable and simple in its semistable subcategory. Write $\phi(T/K_a)=\varphi_{i_a}$ for $0\leq a\leq m$. For $0\leq a<m$, the object $T/K_{a+1}$ is a proper nonzero quotient of $T/K_a$, so Lemma~\ref{lem:simple-slice-phase-bounds}(\ref{item:simple-slice-quotients}) gives $i_a<i_{a+1}$. As $K_0=0$ and $T/K_m\simeq S$, we obtain
  \begin{equation}
    2=i_0<i_1<\cdots<i_m=j.
  \end{equation}
  Therefore $\ell(T)=m+1\leq j-1$.
\end{proof}

The following uniqueness property will make later prescriptions on ambient simple objects well-defined.

\begin{lemma}\label{lem:fringe-head-uniqueness}
  Let $(\Phi,\{\Pi_\varphi\}_{\varphi\in\Phi})$ be a stability data on $\cA$, where $\Phi=\{\varphi_1<\cdots<\varphi_n\}$, $n\geq3$, and every $\Pi_\varphi$ is wide. Then nonisomorphic objects that are simple in $\Pi_{\varphi_2}$ but not in $\cA$ have nonisomorphic simple tops in $\cA$. Dually, nonisomorphic objects that are simple in $\Pi_{\varphi_{n-1}}$ but not in $\cA$ have nonisomorphic simple socles in $\cA$.
\end{lemma}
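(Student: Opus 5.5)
It suffices to prove the first assertion; the second follows by passing to $\cA^{\mathrm{op}}$ and reversing the phase order, exactly as in Lemma~\ref{lem:fringe-simple-quotients}. Let $T,T'$ be simple in $\Pi_{\varphi_2}$ but not ambient simple. By Lemma~\ref{lem:fringe-simple-quotients}(\ref{item:fringe-simple-quotients}) and its proof, $T$ has a unique maximal subobject $K\in\Pi_{\varphi_1}$ and simple top $S=T/K$. Likewise $T'$ has unique maximal subobject $K'\in\Pi_{\varphi_1}$ and top $S'$. Assume $S\simeq S'$; the goal is to show $T\simeq T'$.

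The plan is to form the pullback $P=T\times_S T'$ of the two epimorphisms $p:T\to S$ and $p':T'\to S$. This gives a short exact sequence
\begin{equation*}
  0\longrightarrow K\oplus K'\longrightarrow P\longrightarrow S\longrightarrow0,
\end{equation*}
together with epimorphisms $P\twoheadrightarrow T$ and $P\twoheadrightarrow T'$ with kernels $K'$ and $K$, respectively. Now take the HN filtration of $P$. Its subobject $K\oplus K'$ lies in $\Pi_{\varphi_1}$, and its quotient $S$ has phase $\varphi_j>\varphi_2$. The aim is to produce a subobject $U\subseteq P$ with $U\in\Pi_{\varphi_2}$ mapping onto $S$.

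The first candidate is $\ker(P\to T/\!\!/\cdot)$, which is a cleaner route. Consider the map $\alpha:T\to T'$ obtained as follows. Let $P_1$ be the first HN factor of $P$, or more precisely the torsion part of $P$ for the torsion class $\filt(\Pi_{\varphi_2},\ldots,\Pi_{\varphi_n})$. Write $0\to P'\to P\to P''\to0$ with $P''\in\Pi_{\varphi_1}$ and with every HN phase of $P'$ at least $\varphi_2$. The composite $P'\to P\to S$ is nonzero, since otherwise $P'\subseteq K\oplus K'\in\Pi_{\varphi_1}$ forces $P'=0$, and then $P\in\Pi_{\varphi_1}$ would map onto $S$, which is impossible because $\Pi_{\varphi_1}$ is Serre by Lemma~\ref{lem:extreme slices are Serre}.

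Next, the image of $P'$ in $T$ is not contained in $K$, so it is all of $T$, since $K$ is the unique maximal subobject. Similarly $P'$ maps onto $T'$. Pick a simple object $X$ of $\Pi_{\varphi_2}$ occurring in the HN factor of $P'$ of phase $\varphi_2$. The key step, and the main obstacle, is to show that $P'$ itself lies in $\Pi_{\varphi_2}$. Since $P'$ surjects onto $T$, Hom-vanishing shows that the minimal phase quotient of $P'$ must map nonzero to something. My attempt is to use Lemma~\ref{lem:HN-phase-bounds} together with the fact that $P'/ (P'\cap(K\oplus K'))\simeq S$ has phase $\varphi_j$, and $P'\cap(K\oplus K')\in\Pi_{\varphi_1}$. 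Then the HN phases of $P'$ lie in $\{\varphi_1\}\cup[\varphi_2,\ldots]$, so $\phi^-(P')\ge\varphi_2$.

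Once $P'$ is known to have a semistable subobject of phase $\varphi_2$ surjecting onto $T$ and $T'$, the argument closes by wideness. The surjections $Y\to T$ and $Y\to T'$ from an object $Y\in\Pi_{\varphi_2}$ have kernels in $\Pi_{\varphi_2}$. Passing to a composition series of $Y$ in the abelian category $\Pi_{\varphi_2}$, $T$ and $T'$ are both simple quotients of $Y$ in $\Pi_{\varphi_2}$. Choose a minimal such $Y\subseteq P'$ among those surjecting onto $S$. Its unique simple quotient in $\Pi_{\varphi_2}$ mapping nonzero onto $S$ is then forced, since any other simple factor maps to zero in $S$ and hence lands in $K\oplus K'$, which has phase $\varphi_1$, and that map vanishes by Hom-vanishing. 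This identifies $T\simeq T'$. I expect the phase control of $P'$, so that a phase-$\varphi_2$ subobject actually covers $S$, to be the hardest part. If the direct argument fails, I would instead use that every nonzero quotient of $T$ is simple in a semistable subcategory, from Lemma~\ref{lem:fringe-simple-quotients}, and compare $\ext^1(S,K)$ with $\ext^1(S,K')$.
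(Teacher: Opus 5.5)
There is a genuine gap, and it sits exactly at the step you flag as the main obstacle. You correctly form the pullback $P=T\times_ST'$ and take the part $P'$ of $P$ lying in $\filt(\Pi_{\varphi_2},\ldots,\Pi_{\varphi_n})$. You also correctly show that $P'$ maps onto $S$, hence onto $T$ and $T'$. But you never prove that $P'\in\Pi_{\varphi_2}$.

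Your proposed argument uses $0\to P'\cap(K\oplus K')\to P'\to S\to0$. Lemma~\ref{lem:HN-phase-bounds}(2) applied to it only gives $\phi^+(P')\leq\varphi_j$, and $\phi^-(P')\geq\varphi_2$ already holds by construction. Nothing rules out HN phases of $P'$ strictly between $\varphi_2$ and $\varphi_j$. The sequence $0\to K\oplus K'\to P\to S\to0$ is the wrong one to look at.

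The missing idea is to use instead $0\to K'\to P\xrightarrow{\pi}T\to0$, where $\pi$ is the projection with $\ker\pi\simeq K'\in\Pi_{\varphi_1}$. Lemma~\ref{lem:HN-phase-bounds}(2) then gives $\phi^+(P)\leq\max\{\varphi_1,\varphi_2\}=\varphi_2$. So all HN phases of $P$ lie in $\{\varphi_1,\varphi_2\}$. You essentially showed $P\notin\Pi_{\varphi_1}$, so your $P'$ is precisely the first HN factor of $P$, which lies in $\Pi_{\varphi_2}$.

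The closing step is also not valid as written. Composition factors of $Y$ do not ``land in'' $K\oplus K'$. Moreover, a nonzero intersection $Y\cap(K\oplus K')$ is not excluded by Hom-vanishing: it is a $\Pi_{\varphi_1}$-subobject of a $\Pi_{\varphi_2}$-object, which is allowed (for instance $K\subseteq T$). Having $T$ and $T'$ both as simple quotients of $Y$ in $\Pi_{\varphi_2}$ does not by itself make them isomorphic either.

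What closes the argument is an ingredient you mention but do not exploit. The epimorphism $P'\to T$ is a morphism in $\Pi_{\varphi_2}$, so its kernel $J$ lies in $\Pi_{\varphi_2}$ by wideness. On the other hand, $J\subseteq\ker\pi\simeq K'\in\Pi_{\varphi_1}$, so $J=0$ because $\hom(\Pi_{\varphi_2},\Pi_{\varphi_1})=0$. Hence $P'\simeq T$, and by symmetry $P'\simeq T'$. No minimality, composition-series or $\ext^1$ comparison is needed.
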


\begin{proof}
  Suppose $T,T'$ have isomorphic simple tops in $\cA$, identified with $S$, and write $p:T\twoheadrightarrow S$ and $p':T'\twoheadrightarrow S$ for the quotient maps. Set $K=\ker p$ and $K'=\ker p'$; both lie in $\Pi_{\varphi_1}$. Form the pullback $P=T\times_S T'$, with projections $\pi:P\to T$ and $\pi':P\to T'$. Then $\ker\pi\simeq K'$ and $\ker\pi'\simeq K$. Applying Lemma~\ref{lem:HN-phase-bounds}(2) to $0\to K'\to P\to T\to0$ gives $\phi^+(P)\leq\varphi_2$. Moreover, $P\notin\Pi_{\varphi_1}$: otherwise its quotient $T$ would lie in this Serre category. Therefore the maximal semistable subobject $\iota:P_1\hookrightarrow P$ of $P$ in $\cA$ lies in $\Pi_{\varphi_2}$. The composite $f=\pi\iota:P_1\to T$ is nonzero: otherwise $\iota$ would factor through $\ker\pi\simeq K'\in\Pi_{\varphi_1}$, contrary to Hom-vanishing. Since $T$ is simple in $\Pi_{\varphi_2}$, $f$ is an epimorphism in $\Pi_{\varphi_2}$.

  \revision{Let $J=\ker f$. Wideness gives $J\in\Pi_{\varphi_2}$. The monomorphism $J\hookrightarrow P_1\hookrightarrow P$ factors through $\ker\pi\simeq K'$, since its composite with $\pi$ is zero. Thus $J\hookrightarrow K'$. Hom-vanishing from $\Pi_{\varphi_2}$ to $\Pi_{\varphi_1}$ forces $J=0$. Hence $f$ is an isomorphism and $P_1\simeq T$. Interchanging $T$ and $T'$ gives $P_1\simeq T'$. The socle assertion holds dually.}
\end{proof}

To construct the homomorphisms $f,g$ in Lemma~\ref{lem:adjacent-numerical-cuts}, we will use HRS hearts. The next two lemmas establish the required wideness and length properties.

\begin{lemma}\label{lem:abelian-slices-wide}
  Let $\mathcal H$ be an essentially small abelian category with a stability data $(\Phi,\{\Pi_\varphi\}_{\varphi\in\Phi})$. For every $\varphi\in\Phi$, if $\Pi_\varphi$ is an abelian category, then it is wide in $\mathcal H$.
\end{lemma}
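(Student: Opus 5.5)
Wideness here means closure under extensions, kernels and cokernels in $\mathcal H$. Extension closure of $\Pi_\varphi$ is already part of Definition~\ref{def:stability-data}. So the plan is to take a morphism $f:X\to Y$ in $\Pi_\varphi$ and show that its kernel and cokernel, computed in $\mathcal H$, lie in $\Pi_\varphi$. Since $\Pi_\varphi$ is full and abelian, $f$ has a kernel $k:K'\to X$ and a cokernel $c:Y\to C'$ computed inside $\Pi_\varphi$. The task is to compare these with the kernel $K=\ker_{\mathcal H}f$ and cokernel $C=\coker_{\mathcal H}f$.

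For the kernel, I will first use Hom-vanishing to show that $K\in\Pi_\varphi$, which is not automatic. Let $I=\im_{\mathcal H}f$. It is a quotient of $X$ and a subobject of $Y$, so Lemma~\ref{lem:HN-phase-bounds}(1) gives $\varphi\leq\phi^-(I)\leq\phi^+(I)\leq\varphi$ when $I\neq0$. Hence $I\in\Pi_\varphi$, and the HN filtration of $I$ is trivial. This step does not even need the abelian hypothesis.

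Next consider $0\to K\to X\to I\to0$ with $K\ne0$. Lemma~\ref{lem:HN-phase-bounds}(1) gives $\phi^+(K)\leq\varphi$. I claim $\phi^-(K)\geq\varphi$. Let $K\twoheadrightarrow K^-$ be the minimal semistable quotient of $K$, of phase $\phi^-(K)$, and suppose $\phi^-(K)<\varphi$. Form the pushout $X\to X'$ along $K\to K^-$. This gives $0\to K^-\to X'\to I\to0$, with $X'$ a quotient of $X$. So $\phi^-(X')\geq\varphi$. But $\phi^-(X')\leq\phi^-(K^-)<\varphi$, because a subobject of phase below $\varphi$ forces a low-phase HN piece. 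To see this, apply Lemma~\ref{lem:HN-phase-bounds}(1) to $K^-\subseteq X'$ in the dual form via the minimal quotient of $X'$: there is a nonzero map from $K^-$ to some HN factor of $X'$, and Hom-vanishing bounds the phase. This gives a contradiction, so $K\in\Pi_\varphi$.

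Then $K$ is a kernel of $f$ in $\Pi_\varphi$, since $\Pi_\varphi$ is full and the universal property in $\mathcal H$ restricts. Thus $K\cong K'$. The dual argument applied to $0\to I\to Y\to C\to0$ shows $C\in\Pi_\varphi$, and hence $C\cong C'$.

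The main obstacle is the step showing $\phi^-(K)\geq\varphi$, namely the pushout and Hom-vanishing argument. If it proves awkward, I will instead use the abelian structure directly. Since $\Pi_\varphi$ is abelian, $f$ factors as $X\twoheadrightarrow I'\hookrightarrow Y$ with $I'\in\Pi_\varphi$. Here $k$ is monic in $\mathcal H$, because a kernel in $\Pi_\varphi$ is monic among test objects of $\Pi_\varphi$. To see that it is monic in $\mathcal H$, I will check it on the maximal semistable subobject of an arbitrary test kernel. That subobject lies in phase at most $\varphi$, and it is forced into $\Pi_\varphi$ by the same Lemma~\ref{lem:HN-phase-bounds} bounds.

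Either way, the key facts are the two phase bounds of Lemma~\ref{lem:HN-phase-bounds} and Hom-vanishing. Each is used to trap subobjects of $X$ and quotients of $Y$ in phase exactly $\varphi$.
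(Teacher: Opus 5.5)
Your proposal has a genuine gap in the kernel step. The inequality $\phi^-(X')\leq\phi^-(K^-)$ is false in general, because a semistable subobject of low phase does not force a low HN phase in the ambient object. Your justification runs the wrong way. If $K^-$ maps nonzero into an HN factor of $X'$ of phase $\psi_j$, Hom-vanishing gives $\phi(K^-)\leq\psi_j$. That is a lower bound on the phases of $X'$, not an upper bound on $\phi^-(X')$.

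In your situation $X'$ actually lies in $\Pi_\varphi$. It is a quotient of $X$, so $\phi^-(X')\geq\varphi$. Lemma~\ref{lem:HN-phase-bounds}(2), applied to $0\to K^-\to X'\to I\to0$, gives $\phi^+(X')\leq\max\{\phi^-(K),\varphi\}=\varphi$. Hence $\phi^-(X')=\varphi>\phi^-(K)$, and phase bounds plus Hom-vanishing yield no contradiction.

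This must be so, since your main line never uses that $\Pi_\varphi$ is abelian, and without that hypothesis the statement fails. Take the quiver $1\to2$, and let $P_1$ be the projective-injective module, with top $S_1$ and socle $S_2$. For $\psi<\varphi$, the subcategories $\Pi_\psi=\add S_2$ and $\Pi_\varphi=\add(P_1\oplus S_1)$ form a stability data. Here $\ker(P_1\to S_1)=S_2\notin\Pi_\varphi$, and your pushout is $X'=P_1$.

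Your fallback has the same defect. A subobject of an object of $\Pi_\varphi$ can have phase strictly below $\varphi$, as $S_2\subseteq P_1$ shows. So nothing forces the maximal semistable subobject of a test kernel into $\Pi_\varphi$. Moreover, monicity of $k$ in $\mathcal H$ would only give $K'\subseteq K$, not $K'=K$.

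The missing idea is the step that uses the abelian hypothesis. Once you know $X'\in\Pi_\varphi$, the map $X'\to I$ is epic in $\Pi_\varphi$. It is also monic in $\Pi_\varphi$, because $\hom(Z,K^-)=0$ for every $Z\in\Pi_\varphi$ when $\phi(K^-)<\varphi$. Since $\Pi_\varphi$ is abelian, this map is an isomorphism, which contradicts $K^-\neq0$. Treat the case $I=0$ separately, where $K=X$.

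With this replacement, your pushout along the minimal semistable quotient works. The paper makes exactly this monic-and-epic argument, using $E=X/K_\varphi$ and $K_{<\varphi}=K/K_\varphi$ in place of your $X'$ and $K^-$. The cokernel then follows dually, as you say.
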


\begin{proof}
  Fix an abelian semistable subcategory $\Pi_\varphi$ and a morphism $f:X\to Y$ in $\Pi_\varphi$. By the definition of stability data, $\Pi_\varphi$ is extension-closed. It remains to verify that
  \begin{equation}
    \ker_{\Pi_\varphi}f\cong\ker_{\mathcal H}f, \qquad \coker_{\Pi_\varphi}f \cong\coker_{\mathcal H}f.
  \end{equation}
  All unmarked kernels, images and quotients below are computed in $\mathcal H$. Put $I=\im f$. If $I=0$, the assertion is immediate. Otherwise, Lemma~\ref{lem:HN-phase-bounds}(1) applied to $X\twoheadrightarrow I\hookrightarrow Y$ gives $\varphi\leq\phi^-(I)\leq\phi^+(I)\leq\varphi$, so $I\in\Pi_\varphi$.

  Put $K=\ker f$. By Lemma~\ref{lem:HN-phase-bounds}(1), every HN phase of $K$ is at most $\varphi$. Let $K_\varphi$ be the maximal semistable subobject of $K$ in $\mathcal H$ if phase $\varphi$ occurs, and put $K_\varphi=0$ otherwise. Then $K_\varphi\in\Pi_\varphi$, and every HN phase of $K_{<\varphi}:=K/K_\varphi$ is strictly below $\varphi$. Via the composite $K_\varphi\to K\to X$, regard $K_\varphi$ as a subobject of $X$ and put $E=X/K_\varphi$. The canonical epimorphism $X\to I$ vanishes on $K_\varphi$, so it induces an epimorphism $p:E\to I$. The inclusion $K\to X$ induces a monomorphism $K_{<\varphi}\to E$ whose image is $\ker_{\mathcal H}p$. Thus the inclusions $K_\varphi\subseteq K\subseteq X$ induce the following commutative diagram in $\mathcal H$, with exact rows and columns:
  \begin{equation}
    \begin{tikzcd}[column sep=2.8em,row sep=2.1em]
      & 0 \arrow[d] & 0 \arrow[d] & \\
      & K_\varphi \arrow[r,equal] \arrow[d] & K_\varphi \arrow[d] & \\
      0 \arrow[r] & K \arrow[r] \arrow[d] & X \arrow[r] \arrow[d] & I \arrow[r] \arrow[d,equal] & 0 \\
      0 \arrow[r] & K_{<\varphi} \arrow[r] \arrow[d] & E \arrow[r,"p"] \arrow[d] & I \arrow[r] & 0 \\
      & 0 & 0
    \end{tikzcd}
  \end{equation}
  If $K_{<\varphi}=0$, then $E\cong I\in\Pi_\varphi$. Otherwise, since $E$ is a quotient of $X$, Lemma~\ref{lem:HN-phase-bounds}(1)--(2) gives
  \begin{equation}
    \varphi\leq\phi^-(E)\leq\phi^+(E) \leq\max\{\phi^+(K_{<\varphi}),\phi(I)\}=\varphi.
  \end{equation}
  Thus $E\in\Pi_\varphi$ in either case.

  The map $p:E\to I$ is epic in $\Pi_\varphi$, since it is epic in $\mathcal H$ and $\Pi_\varphi$ is full. It is also monic in $\Pi_\varphi$, because $\hom_{\mathcal H} (Z,K_{<\varphi})=0$ for every $Z\in\Pi_\varphi$. As $\Pi_\varphi$ is abelian, $p$ is an isomorphism. Hence $K_{<\varphi}=0$ and $\ker_{\mathcal H}f=K_\varphi\in\Pi_\varphi$. Dually, $\coker_{\mathcal H}f\in\Pi_\varphi$. Fullness shows that these kernels and cokernels also satisfy their universal properties in $\Pi_\varphi$, giving the required identifications. Therefore $\Pi_\varphi$ is wide.
\end{proof}

\begin{lemma}\label{lem:length-from-wide-HN}
  Let $\mathcal H$ be an essentially small abelian category with a stability data $(\Phi,\{\Pi_\varphi\}_{\varphi\in\Phi})$. If $\Phi$ is finite and every $\Pi_\varphi$ is an abelian length category, then $\mathcal H$ is a length category.
\end{lemma}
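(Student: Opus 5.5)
Every nonzero object of $\mathcal H$ has a finite HN filtration whose factors lie in the $\Pi_\varphi$. Each $\Pi_\varphi$ is abelian and hence wide by Lemma~\ref{lem:abelian-slices-wide}; by hypothesis each is also a length category. So every HN factor has a finite composition series in $\Pi_\varphi$, and stacking these gives every nonzero $M$ a finite filtration whose factors are simple objects of the various $\Pi_\varphi$. Call such a filtration a \emph{slice-simple filtration}. The task is to turn these into genuine composition series in $\mathcal H$, i.e.\ to show every $M$ is both noetherian and artinian. I would do this by induction on a numerical invariant built from phases.

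Define, for nonzero $M$, the invariant
\[
\nu(M)=\min \text{ of the total length } \textstyle\sum_\varphi \ell_{\Pi_\varphi}(\text{HN factor of phase } \varphi),
\]
computed from the HN filtration. This is well defined because the HN factors are unique up to isomorphism. The claim is that every object $M$ with $\nu(M)=N$ has length at most $c(n)^N$ in $\mathcal H$ for some constant depending on $n=|\Phi|$. It suffices to bound the length of a slice-simple object $X\in\Pi_\alpha$, since lengths add along filtrations once finiteness is known.

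For such $X$ I would argue by downward or upward induction on the position of $\alpha$ in the finite chain $\Phi$. Lemma~\ref{lem:simple-slice-phase-bounds} says every proper nonzero subobject $U\subsetneq X$ has all HN phases $<\alpha$, and every proper nonzero quotient has all phases $>\alpha$. Take any strictly increasing chain $0\ne U_1\subsetneq U_2\subsetneq\cdots$ of subobjects of $X$. Each $U_k$ has phases in $\{\varphi<\alpha\}$, and each $X/U_k$ has phases in $\{\varphi>\alpha\}$.

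The key step, which I expect to be the main obstacle, is to show such chains are bounded. I would attempt it via the additive count
\[
\nu(U)+\nu(X/U)
\]
together with a monotonicity statement, proved by induction on $n$. The monotonicity statement is this: for a subobject $U\subsetneq U'$ with all phases below $\alpha$, either $\nu(U)<\nu(U')$, or the two differ in their HN type, which lies in a finite set.

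The cleaner route is to restrict the stability data to the torsion pair $(\cT_{>\beta},\cF_{\le\beta})$ for a cut $\beta$. This realizes $\filt(\Pi_\varphi:\varphi\le\beta)$ as a category with stability data on fewer phases, so by induction on $n$ it is a length category. Then $U_k\in\filt(\Pi_\varphi:\varphi<\alpha)$ and $X/U_k\in\filt(\Pi_\varphi:\varphi>\alpha)$, both length categories by induction, and their lengths inside these subcategories agree with lengths in $\mathcal H$ as long as they are closed under subobjects in the relevant range.

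To close the argument I would need to check two things. First, in a strictly increasing chain of $U_k$, the lengths in the lower subcategory strictly increase. Second, they are bounded, because $\ell(X/U_k)$ in the upper subcategory is nonnegative and the lower length plus the upper length equals a fixed sum coming from the Grothendieck-group-type additivity, since both subcategories are length categories with the same simple objects as $\mathcal H$ inside them.

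The delicate point is exactly that a simple object of $\filt(\Pi_\varphi:\varphi<\alpha)$ is simple in $\mathcal H$. This follows because $\filt(\Pi_\varphi:\varphi\le\beta)$ is a torsion-free class and hence closed under subobjects, and dually $\filt(\Pi_\varphi:\varphi>\alpha)$ is a torsion class and closed under quotients. With ascending and descending chains so bounded, $X$ has finite length, and hence so does every object.
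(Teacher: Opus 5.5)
There is a genuine gap. Your reduction to an object $X$ simple in some $\Pi_\alpha$ is fine. An upward induction on the position of $\alpha$ (equivalently, taking $X$ of minimal phase among such objects of infinite length) does show that every proper subobject $U\subsetneq X$ lies in $\filt(\Pi_\varphi:\varphi<\alpha)$, so it has finite length in $\mathcal H$.

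\textbf{The missing half.} You need some quotient $X/U$ with $0\ne U\subsetneq X$ to have finite length, and nothing you propose supplies this.
\begin{itemize}
\item Upward induction says nothing about phases above $\alpha$.
\item Your ``cleaner route'' applies the lemma by induction on $n$ to $\filt(\Pi_\varphi:\varphi>\alpha)$ or $\filt(\Pi_\varphi:\varphi\le\beta)$. These are torsion and torsion-free classes, not abelian categories, so the inductive hypothesis does not apply to them.
\item The statement you actually need is that objects of these classes have finite length in $\mathcal H$. That is not a fewer-phase instance of the lemma: already for a single slice it is the whole difficulty.
\item A torsion class is closed only under quotients. So an object with no proper nonzero subobject inside $\cT_{>\alpha}$ need not be simple in $\mathcal H$, and ``lengths inside'' it would not control lengths in $\mathcal H$.
\item Your bound via a fixed sum $\ell(U_k)+\ell(X/U_k)$ presupposes that $\ell(X/U_k)$ is finite, which is the point at issue. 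Once a single $X/U$ has finite length, no chain argument is needed at all.
\end{itemize}
Control of proper subobjects alone cannot suffice: every proper subgroup of $\bZ(p^\infty)$ is finite, yet $\bZ(p^\infty)$ has infinite length.

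\textbf{The missing idea.} Push the infinite length upward and let finiteness of $\Phi$ stop it.
\begin{itemize}
\item Take $X$ of minimal phase among infinite-length objects that are simple in their semistable subcategory.
\item A pullback shows that every proper subobject $V$ of any quotient $Q$ of $X$ is a quotient of a proper subobject of $X$. Hence $V$ has finite length.
\item For $0\ne U\subsetneq X$, the quotient $X/U$ has infinite length. So does its lowest HN factor $Y$, and so does a simple quotient $X'$ of $Y$ in its semistable subcategory, because the kernels involved are proper subobjects of quotients of $X$.
\item By Lemma~\ref{lem:simple-slice-phase-bounds}, $\phi(X')>\phi(X)$. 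Since $X'$ is again a quotient of $X$, the same finite-length control applies to it.
\item Iterating produces infinite-length slice-simple objects of strictly increasing phase, contradicting $|\Phi|<\infty$.
\end{itemize}
The invariant $\nu$ and the bound $c(n)^N$ are never established, and they are not needed.
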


\begin{proof}
  By Lemma~\ref{lem:abelian-slices-wide}, every semistable subcategory is wide in $\mathcal H$, so its inclusion is exact. Every object of $\mathcal H$ has a finite HN filtration. Each HN factor has a composition series in its own semistable subcategory. The inclusions of the semistable subcategories into $\mathcal H$ are exact. Pulling back these composition series along the HN quotient maps gives a finite filtration in $\mathcal H$ whose factors are simple in their respective semistable subcategories. These factors need not be simple in $\mathcal H$. If they all had finite length in $\mathcal H$, so would the original object, since finite-length objects are closed under extensions. Hence, if $\mathcal H$ is not a length category, some object simple in a semistable subcategory has infinite length in $\mathcal H$. Choose such an $X$ whose phase is minimal.

  \begin{claim}
    Every proper subobject of any quotient of $X$ has finite length in $\mathcal H$.
  \end{claim}

  \begin{proof}[Proof of the claim]
    By Lemma~\ref{lem:simple-slice-phase-bounds}(\ref{item:simple-slice-subobjects}) and the minimality of $\phi(X)$, every proper subobject of $X$ has finite length in $\mathcal H$. For a quotient $X\twoheadrightarrow Q$ and a proper subobject $V\hookrightarrow Q$, form the pullback $P=X\times_Q V$. Then $P\hookrightarrow X$ is a proper monomorphism and $P\twoheadrightarrow V$ is an epimorphism. Thus $P$, and hence $V$, has finite length in $\mathcal H$.
  \end{proof}

  Choose $0\ne U\subsetneq X$ in $\mathcal H$. By the claim, $U$ has finite length, so $X/U$ has infinite length. Its lowest HN factor $Y$ also has infinite length: the kernel of $X/U\to Y$ is a proper subobject of a quotient of $X$ and hence has finite length by the claim. Choose a simple quotient $X'$ of $Y$ in its semistable subcategory. The kernel of $Y\twoheadrightarrow X'$ is a proper subobject of the quotient $Y$ of $X$, so the claim again gives finite length. Thus $X'$ has infinite length in $\mathcal H$. Since $X'$ is a proper nonzero quotient of $X$, Lemma~\ref{lem:simple-slice-phase-bounds}(\ref{item:simple-slice-quotients}) gives $\phi(X')>\phi(X)$. The claim applies to every quotient of $X'$, since each is also a quotient of $X$. \revision{At every subsequent step the new object remains a quotient of the original $X$, so the same claim continues to apply. Iteration therefore produces objects simple in their semistable subcategories with strictly increasing phases, contrary to finiteness of the phase set.} Thus $\mathcal H$ is a length category.
\end{proof}

\revision{Lemmas~\ref{lem:abelian-slices-wide} and~\ref{lem:length-from-wide-HN} allow us to rotate the phase order inside a length heart.}

\begin{lemma}\label{lem:rotated-HN-hearts}
  Let $(\Phi,\{\Pi_\varphi\}_{\varphi\in\Phi})$ be a stability data on an essentially small abelian length category $\cA$, where $\Phi=\{\varphi_1<\cdots<\varphi_n\}$. Assume that every $\Pi_\varphi$ is wide. For each $1\leq i\leq n$, put $\cT_i=\filt(\Pi_{\varphi_i},\ldots,\Pi_{\varphi_n})$ and $\cF_i=\filt(\Pi_{\varphi_1},\ldots,\Pi_{\varphi_{i-1}})$, with $\cF_1=0$. The HRS heart
  \begin{equation}\label{eq:rotated-HN-heart}
    \mathcal H_i=\cF_i[1]*\cT_i \subseteq D^b(\cA)
  \end{equation}
  is a length category. Here $\cF_i[1]*\cT_i$ consists of the objects $E$ admitting a distinguished triangle $F[1]\to E\to T\to F[2]$ with $F\in\cF_i$ and $T\in\cT_i$. It carries a stability data $(\{1<\cdots<n\},\{\Pi^{(i)}_r\}_{r=1}^n)$, where
  \begin{equation}\label{eq:rotated-HN-order}
    (\Pi^{(i)}_1,\ldots,\Pi^{(i)}_n) =(\Pi_{\varphi_i},\ldots,\Pi_{\varphi_n}, \Pi_{\varphi_1}[1],\ldots,\Pi_{\varphi_{i-1}}[1]).
  \end{equation}
  Every $\Pi^{(i)}_r$ is wide in $\mathcal H_i$. When $i=1$, the shifted semistable subcategories in \eqref{eq:rotated-HN-order} are absent. In particular, $\Pi_{\varphi_i}$ and, for $i>1$, $\Pi_{\varphi_{i-1}}[1]$ are Serre subcategories of $\mathcal H_i$.
\end{lemma}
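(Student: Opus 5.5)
We use the standard HRS tilting formalism. First we check that $(\cT_i,\cF_i)$ is a torsion pair in $\cA$. Hom-vanishing $\hom(\cT_i,\cF_i)=0$ follows from condition~(1) of Definition~\ref{def:stability-data}, since morphisms from filtered objects with factors of higher phase to filtered objects with factors of lower phase vanish by induction on filtration length. For $M\in\cA$, cut its HN filtration at the first index with phase $<\varphi_i$. This gives $0\to T_M\to M\to F_M\to0$ with $T_M\in\cT_i$ and $F_M\in\cF_i$. Happel--Reiten--Smal\o\ then says $\mathcal H_i=\cF_i[1]*\cT_i$ is the heart of a bounded t-structure on $D^b(\cA)$, hence abelian. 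Its short exact sequences are the distinguished triangles with all three terms in $\mathcal H_i$. The pair $(\cF_i[1],\cT_i)$ is a torsion pair in $\mathcal H_i$ with torsion class $\cF_i[1]$.

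Next we verify the stability data axioms for the rotated order \eqref{eq:rotated-HN-order}. Each $\Pi^{(i)}_r$ is nonzero and extension-closed in $\mathcal H_i$. For $\Pi_{\varphi_k}$ with $k\ge i$ this holds because extensions in $\mathcal H_i$ between objects of $\cA$ are computed by $\ext^1_\cA$. For the shifted pieces, shift the corresponding extension-closedness in $\cA$. Hom-vanishing must be checked in three cases.
\begin{enumerate}
  \item Among unshifted pieces and among shifted pieces it is inherited from $\cA$.
  \item From a shifted piece to an unshifted one, $\hom(\Pi_{\varphi_a}[1],\Pi_{\varphi_b})=\hom_{D^b}(X[1],Y)=0$ for degree reasons.
  \item The order requires no vanishing from an unshifted piece to a shifted one, since the shifted pieces sit above.
\end{enumerate}
For HN filtrations, an object $E$ of $\mathcal H_i$ sits in $0\to F[1]\to E\to T\to0$ in $\mathcal H_i$. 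We splice the shifted HN filtration of $F$, which has phases $\varphi_{i-1}>\dots>\varphi_1$ in decreasing order, under the pullback of the HN filtration of $T$. This yields a filtration of $E$ whose first factors lie in $\Pi_{\varphi_{i-1}}[1],\dots,\Pi_{\varphi_1}[1]$, followed by factors in $\Pi_{\varphi_n},\dots,\Pi_{\varphi_i}$. That is exactly a decreasing sequence in the rotated order.

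For wideness and length, each $\Pi^{(i)}_r$ is equivalent to the wide subcategory $\Pi_{\varphi}$ of $\cA$ or its shift. It is therefore an abelian length category, because wide subcategories of a length category are length categories. Lemma~\ref{lem:abelian-slices-wide} then gives wideness in $\mathcal H_i$. Since the phase set is finite, Lemma~\ref{lem:length-from-wide-HN} shows $\mathcal H_i$ is a length category. Finally, $\Pi_{\varphi_i}=\Pi^{(i)}_1$ is the minimal phase and $\Pi_{\varphi_{i-1}}[1]=\Pi^{(i)}_n$ is the maximal phase. Lemma~\ref{lem:extreme slices are Serre}, applied in $\mathcal H_i$, shows both are Serre.

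The main obstacle is the bookkeeping in the abelian-category step. We must check that the sheer abelian structure of $\Pi_{\varphi}[1]$ and $\Pi_\varphi$ is intrinsic, independent of the embedding. We must also check that the triangle-splicing produces genuine subobjects in $\mathcal H_i$. For the latter we would use that every monomorphism in $\mathcal H_i$ corresponds to a triangle with third term in $\mathcal H_i$, together with the octahedral axiom.
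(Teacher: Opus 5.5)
Your proposal is correct and follows the paper's proof essentially step for step. The shared steps are the HRS tilt of the HN torsion pair $(\cT_i,\cF_i)$, and splicing the shifted HN filtration of $H^{-1}(E)$ with the pullback of the HN filtration of $H^0(E)$. Hom-vanishing comes from the two blocks together with $\hom_{D^b(\cA)}(\cF_i[1],\cT_i)=0$, extension-closure comes via $\ext^1_\cA$, and the conclusion follows from Lemmas~\ref{lem:abelian-slices-wide}, \ref{lem:length-from-wide-HN} and~\ref{lem:extreme slices are Serre}.

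The two obstacles you flag are lighter than you suggest. Being abelian is intrinsic to a category, and $\cA\to D^b(\cA)$ is fully faithful, so $\Pi_\varphi$ is the same category whether viewed inside $\cA$ or $\mathcal H_i$. Since short exact sequences in $\mathcal H_i$ are exactly the triangles with all three terms in $\mathcal H_i$, you can obtain the spliced filtration by taking pullbacks in $\mathcal H_i$, without invoking the octahedral axiom.
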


\begin{proof}
  Fix $1\leq i\leq n$. HN filtrations give the torsion pair $(\cT_i,\cF_i)$. The Happel--Reiten--Smal\o{} tilting construction (see \cite[Chapter~I, Section~2]{HRS1996}) shows that $\mathcal H_i$ in \eqref{eq:rotated-HN-heart} is the heart of a bounded $t$-structure on $D^b(\cA)$. Every object $E$ of this heart has a short exact sequence in it
  \begin{equation}
    0\longrightarrow H^{-1}(E)[1]\longrightarrow E \longrightarrow H^0(E)\longrightarrow0,
  \end{equation}
  with $H^{-1}(E)\in\cF_i$ and $H^0(E)\in\cT_i$. \review{The terms and factors of their HN filtrations belong to $\cF_i$ and $\cT_i$, respectively. Each short exact sequence in these filtrations determines a distinguished triangle in $D^b(\cA)$. After shifting the $\cF_i$ part by $[1]$, all three terms of each triangle lie in $\mathcal H_i$, so it is a short exact sequence in this heart.} Combine the shifted HN filtration of $H^{-1}(E)$ with the pullbacks of the HN filtration of $H^0(E)$ along $E\twoheadrightarrow H^0(E)$. The resulting factors occur in decreasing order in the list \eqref{eq:rotated-HN-order}. Within each of the two blocks in that list, \revision{Hom-vanishing follows from the defining condition of $(\Phi,\{\Pi_\varphi\}_{\varphi\in\Phi})$}, and $\hom_{D^b(\cA)}(\cF_i[1],\cT_i)=0$. For $A,B\in\Pi_{\varphi_j}$, the identity $\hom_{D^b(\cA)}(B,A[1])=\ext^1_\cA(B,A)$ identifies extensions of $B$ by $A$ with short exact sequences in $\cA$. Their middle terms lie in $\Pi_{\varphi_j}$. For $j<i$, shifting these sequences by $[1]$ gives the extensions between objects of $\Pi_{\varphi_j}[1]$ in $\mathcal H_i$. Thus the semistable subcategories in \eqref{eq:rotated-HN-order} are extension-closed, and the resulting filtrations define a stability data.

  Each original semistable subcategory is a wide subcategory of the length category $\cA$, hence an abelian length category. The shift functor gives an equivalence with its shifted copy, so every semistable subcategory in \eqref{eq:rotated-HN-order} is also an abelian length category. Lemma~\ref{lem:abelian-slices-wide} makes these semistable subcategories wide in $\mathcal H_i$, and Lemma~\ref{lem:length-from-wide-HN} then shows that $\mathcal H_i$ is a length category. The final assertion follows from Lemma~\ref{lem:extreme slices are Serre}.
\end{proof}

Under the hypotheses of Lemma~\ref{lem:rotated-HN-hearts}, fix $1\leq i\leq n$. The inclusion of the bounded heart $\mathcal H_i$ into $D^b(\cA)$ induces an isomorphism
\begin{equation}\label{eq:heart-class-map}
  \begin{aligned}
    \iota_i:K_0(\mathcal H_i)&\xrightarrow{\sim}K_0(\cA),\\
    [E]_{\mathcal H_i}&\longmapsto [H^0(E)]_\cA-[H^{-1}(E)]_\cA,
  \end{aligned}
\end{equation}
where cohomology is taken with respect to the original heart $\cA$.

\revision{Using simple classes in the HRS hearts $\mathcal H_i$, we construct integer-valued homomorphisms $f,g$ with $f(H_i)=g(H_{i+1})=0$ and $f-g=\ell$.}

\begin{lemma}\label{lem:adjacent-numerical-cuts}
  Let $\cA$ be an essentially small abelian length category with a stability data $(\Phi,\{\Pi_\varphi\}_{\varphi\in\Phi})$, where $\Phi=\{\varphi_1<\cdots<\varphi_n\}$, and assume that every semistable subcategory is wide. For each $1\leq i<n$, there exist group homomorphisms $f,g:K_0(\cA)\to\bZ$ with the following properties.

  For every $1\leq j\leq n$ and every nonzero $X\in\Pi_{\varphi_j}$, their values have the signs
  \begin{equation}\label{eq:adjacent-form-signs}
    \renewcommand{\arraystretch}{1.2}
    \begin{array}{c|cccc}
      &j<i&j=i&j=i+1&j>i+1\\\hline
      f(X)&<0&=0&>0&>0\\
      g(X)&<0&<0&=0&>0
    \end{array}.
  \end{equation}
  Moreover, as homomorphisms on $K_0(\cA)$,
  \begin{equation}\label{eq:adjacent-numerical-cuts}
    f-g=\ell.
  \end{equation}
  Thus $f(M)-g(M)=\ell(M)>0$ for every nonzero $M\in\cA$.
\end{lemma}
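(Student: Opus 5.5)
The plan is to build $f$ and $g$ on the Grothendieck group of a rotated heart and transport them to $K_0(\cA)$ through the isomorphism \eqref{eq:heart-class-map}. Fix $1\leq i<n$ and use the HRS heart $\mathcal H=\mathcal H_{i+1}$ of Lemma~\ref{lem:rotated-HN-hearts}. Its phase order is $\Pi_{\varphi_{i+1}},\ldots,\Pi_{\varphi_n},\Pi_{\varphi_1}[1],\ldots,\Pi_{\varphi_i}[1]$. By that lemma, $\mathcal H$ is a length category, each of these subcategories is wide in $\mathcal H$, and the two extreme ones, $\Pi_{\varphi_{i+1}}$ and $\Pi_{\varphi_i}[1]$, are Serre in $\mathcal H$. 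Hence $K_0(\mathcal H)$ is free on the classes of the simple objects of $\mathcal H$. A simple object $E$ of $\mathcal H$ has an HN filtration of length one, so it lies in exactly one of the listed subcategories. It therefore has either the form $E\in\Pi_{\varphi_j}$ with $j\geq i+1$, where $\ell(\iota E)=\ell(E)>0$, or the form $E=Y[1]$ with $Y\in\Pi_{\varphi_k}$, $k\leq i$, where $\ell(\iota E)=-\ell(Y)<0$.

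Define $\tilde f,\tilde g:K_0(\mathcal H)\to\bZ$ on simple classes as follows.
\begin{itemize}
\item If $E\in\Pi_{\varphi_i}[1]$, put $\tilde f(E)=0$ and $\tilde g(E)=-\ell(\iota E)$.
\item If $E\in\Pi_{\varphi_{i+1}}$, put $\tilde g(E)=0$ and $\tilde f(E)=\ell(E)$.
\item If $E\in\Pi_{\varphi_j}$ with $j>i+1$, put $\tilde g(E)=1$ and $\tilde f(E)=1+\ell(E)$.
\item If $E\in\Pi_{\varphi_k}[1]$ with $k<i$, put $\tilde f(E)=1$ and $\tilde g(E)=1-\ell(\iota E)$.
\end{itemize}
Then set $f=\tilde f\circ\iota^{-1}$ and $g=\tilde g\circ\iota^{-1}$. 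By construction $\tilde f-\tilde g=\ell\circ\iota$ on every simple class of $\mathcal H$, hence on all of $K_0(\mathcal H)$, so $f-g=\ell$. Moreover, $\tilde f$ is strictly positive on every simple of $\mathcal H$ outside $\Pi_{\varphi_i}[1]$ and vanishes on those inside. Likewise, $\tilde g$ is strictly positive on every simple outside $\Pi_{\varphi_{i+1}}$ and vanishes on those inside. This uses $-\ell(\iota E)=\ell(Y)>0$ in the shifted cases.

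Next I derive the sign table \eqref{eq:adjacent-form-signs} from the Serre property. Let $Z$ be a nonzero object of $\mathcal H$. Then $\tilde f(Z)\geq0$, with equality only if all composition factors of $Z$ in $\mathcal H$ lie in $\Pi_{\varphi_i}[1]$. Since $\Pi_{\varphi_i}[1]$ is Serre in $\mathcal H$, equality happens exactly when $Z\in\Pi_{\varphi_i}[1]$. Conversely, every object of $\Pi_{\varphi_i}[1]$ has all its $\mathcal H$-composition factors in that subcategory, so $\tilde f$ vanishes on it.

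Now take a nonzero $X\in\Pi_{\varphi_j}$.
\begin{itemize}
\item If $j\geq i+1$, then $X\in\mathcal H$ and $X\notin\Pi_{\varphi_i}[1]$, so $f(X)=\tilde f(X)>0$.
\item If $j=i$, then $X[1]\in\Pi_{\varphi_i}[1]$ and $f(X)=-\tilde f(X[1])=0$.
\item If $j<i$, then $X[1]\in\mathcal H\setminus\Pi_{\varphi_i}[1]$, so $f(X)=-\tilde f(X[1])<0$.
\end{itemize}
The same argument with $\tilde g$ and the Serre subcategory $\Pi_{\varphi_{i+1}}$ gives $g(X)=0$ for $j=i+1$, $g(X)>0$ for $j>i+1$, and $g(X)<0$ for $j\leq i$. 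In particular $f(H_i)=0$ and $g(H_{i+1})=0$.

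The point needing most care is that the simple objects of $\mathcal H$ sit in single semistable subcategories, and that the Serre property is what controls the objects of $\Pi_{\varphi_j}$. The latter matters because an object simple in $\Pi_{\varphi_j}$ need not be simple in $\mathcal H$. Both facts are supplied by Lemma~\ref{lem:rotated-HN-hearts}, so beyond that lemma the argument is bookkeeping of signs. One must also note that $\iota$ sends $[X[1]]_{\mathcal H}$ to $-[X]$.
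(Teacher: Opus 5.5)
Your proposal is correct and follows the paper's proof: you use the same heart $\mathcal H_{i+1}$ and the same Serre-subcategory argument for the sign table. Your values on simple classes coincide with the paper's: its single ``otherwise'' case, $f=1+\ell(H^0(S))$ and $g=1+\ell(H^{-1}(S))$, is exactly your two cases for $j>i+1$ and $k<i$.
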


\begin{proof}
  Fix $1\leq i<n$ and consider the length heart $\mathcal H_{i+1}$ in Lemma~\ref{lem:rotated-HN-hearts}. The subcategories $\Pi_{\varphi_i}[1]$ and $\Pi_{\varphi_{i+1}}$ are Serre in this heart.

  Let $\mathcal S$ be a set of representatives of the isomorphism classes of simple objects in $\mathcal H_{i+1}$. Then $\{\iota_{i+1}([S]_{\mathcal H_{i+1}}):S\in\mathcal S\}$ is a $\bZ$-basis of $K_0(\cA)$. Define $f:K_0(\cA)\to\bZ$ on this basis by
  \begin{equation}\label{eq:adjacent-f-simple-values}
    f\bigl(\iota_{i+1}([S]_{\mathcal H_{i+1}})\bigr)=
    \begin{cases}
      0,&S\in\Pi_{\varphi_i}[1],\\
      \ell(S),&S\in\Pi_{\varphi_{i+1}},\\
      1+\ell(H^0(S)),&S\notin\Pi_{\varphi_i}[1]\cup\Pi_{\varphi_{i+1}},
    \end{cases}
  \end{equation}
  and put $g=f-\ell$ as a homomorphism on $K_0(\cA)$. Here $S$ is simple in $\mathcal H_{i+1}$; all cohomology objects and composition lengths are taken in $\cA$. Equation~\eqref{eq:heart-class-map} with $i+1$ in place of $i$ gives
  \begin{equation}\label{eq:adjacent-g-simple-values}
    g\bigl(\iota_{i+1}([S]_{\mathcal H_{i+1}})\bigr)=
    \begin{cases}
      \ell(H^{-1}(S)),&S\in\Pi_{\varphi_i}[1],\\
      0,&S\in\Pi_{\varphi_{i+1}},\\
      1+\ell(H^{-1}(S)),&S\notin\Pi_{\varphi_i}[1]\cup\Pi_{\varphi_{i+1}}.
    \end{cases}
  \end{equation}
  To verify \eqref{eq:adjacent-form-signs}, we first determine the signs of $f$ and $g$ on objects of $\mathcal H_{i+1}$. If $S_1,\ldots,S_r$ are the composition factors of $E\in\mathcal H_{i+1}$, counted with multiplicity, then
  \begin{equation}
    h\bigl(\iota_{i+1}([E]_{\mathcal H_{i+1}})\bigr) =\sum_{a=1}^rh\bigl(\iota_{i+1}([S_a]_{\mathcal H_{i+1}})\bigr) \qquad(h\in\{f,g\}).
  \end{equation}
  Each summand is nonnegative. Since $\Pi_{\varphi_i}[1]$ and $\Pi_{\varphi_{i+1}}$ are Serre, \eqref{eq:adjacent-f-simple-values} and \eqref{eq:adjacent-g-simple-values} imply
  \begin{equation}\label{eq:adjacent-heart-signs}
    \begin{aligned}
      f\bigl(\iota_{i+1}([E]_{\mathcal H_{i+1}})\bigr)&\geq0, &\text{with equality iff }E\in\Pi_{\varphi_i}[1],\\
      g\bigl(\iota_{i+1}([E]_{\mathcal H_{i+1}})\bigr)&\geq0, &\text{with equality iff }E\in\Pi_{\varphi_{i+1}}.
    \end{aligned}
  \end{equation}

  Let $1\leq j\leq n$ and $0\ne X\in\Pi_{\varphi_j}$. If $j\geq i+1$, then $X\in\mathcal H_{i+1}$ and
  \begin{equation}
    \iota_{i+1}([X]_{\mathcal H_{i+1}})=[X]_\cA.
  \end{equation}
  Applying \eqref{eq:adjacent-heart-signs} to $X$ gives $f(X)>0$, while $g(X)=0$ for $j=i+1$ and $g(X)>0$ for $j>i+1$.

  If $j\leq i$, then $X[1]\in\mathcal H_{i+1}$ and
  \begin{equation}
    \iota_{i+1}([X[1]]_{\mathcal H_{i+1}})=-[X]_\cA.
  \end{equation}
  Applying \eqref{eq:adjacent-heart-signs} to $X[1]$ gives $g(X)<0$, while $f(X)=0$ for $j=i$ and $f(X)<0$ for $j<i$. This proves \eqref{eq:adjacent-form-signs}; \eqref{eq:adjacent-numerical-cuts} holds by the definition of $g$.
\end{proof}

\subsection{\texorpdfstring{A criterion for stability data when $\lvert\Phi\rvert\leq6$}{A criterion for stability data with at most six phases}}\label{subsec:small-phase-induction}

For $1\leq n\leq6$ and all $\Pi_\varphi$ wide, we verify condition (3) of Theorem~\ref{thm:positive-class-criterion} with $\bF=\bQ$. The next lemma extends a homomorphism across at most two boundary semistable subcategories.

\begin{lemma}\label{lem:two-fringe-extension}
  Let $(\Phi,\{\Pi_\varphi\}_{\varphi\in\Phi})$ be a stability data on $\cA$, with $\Phi=\{\varphi_1<\cdots<\varphi_n\}$ and every $\Pi_\varphi$ wide. Let $r\in\{0,1,2\}$ with $r<n$. If $f:K_0(\cA)\to\bZ$ satisfies $f(X)<0$ for $1\leq j\leq r$ and $0\ne X\in\Pi_{\varphi_j}$, there are homomorphisms $\theta_1,\ldots,\theta_r:K_0(\cA)\to\bQ$ such that, with $\theta_{r+1}=f$,
  \begin{equation}
    \theta_j(H_j)=0,\qquad \theta_j>\theta_{j+1} \quad(1\leq j\leq r).
  \end{equation}
  Dually, if $g:K_0(\cA)\to\bZ$ satisfies $g(X)>0$ for $n-r<j\leq n$ and $0\ne X\in\Pi_{\varphi_j}$, there are homomorphisms $\theta_{n-r+1},\ldots,\theta_n:K_0(\cA)\to\bQ$ such that, with $\theta_{n-r}=g$,
  \begin{equation}
    \theta_j(H_j)=0,\qquad \theta_{j-1}>\theta_j \quad(n-r<j\leq n).
  \end{equation}
  For $r=0$, both conclusions are vacuous.
\end{lemma}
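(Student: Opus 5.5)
We prove the first assertion; the dual one follows by passing to $\cA^{\mathrm{op}}$ and reversing the phase order, which turns $g$ into $-g$. The idea is to prescribe each $\theta_j$ on the ambient simples, which form a $\bZ$-basis of $K_0(\cA)$. We arrange $\theta_j(H_j)=0$ by forcing $\theta_j$ to vanish on the classes of the simple objects of $\Pi_{\varphi_j}$; this suffices because $\Pi_{\varphi_j}$ is wide, hence an abelian length category, and so $H_j$ is spanned by those classes. The argument relies on the integrality of $f$: the hypothesis $f(X)<0$ yields $f(X)\leq-1$.

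\emph{Case $r=1$.} By Lemma~\ref{lem:extreme slices are Serre}, $\Pi_{\varphi_1}$ is Serre. Hence its simple objects are exactly the ambient simples lying in it, and every object of $\Pi_{\varphi_1}$ has all its composition factors there. We set
\begin{equation*}
  \theta_1(S)=\begin{cases}0,& S\in\Pi_{\varphi_1},\\ f(S)+1,&\text{otherwise,}\end{cases}
\end{equation*}
for $S$ ambient simple. Then $\theta_1(H_1)=0$. For $S\in\Pi_{\varphi_1}$ the hypothesis gives $f(S)<0=\theta_1(S)$, so $\theta_1>f$ on every ambient simple.

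\emph{Case $r=2$.} Here $n\geq3$, so Lemmas~\ref{lem:fringe-simple-quotients} and~\ref{lem:fringe-head-uniqueness} apply. Each simple $T$ of $\Pi_{\varphi_2}$ is of one of two kinds.
\begin{enumerate}
  \item $T$ is ambient simple.
  \item $T$ is not ambient simple. Then there is a nonsplit sequence $0\to K_T\to T\to S_T\to0$ with $K_T\in\Pi_{\varphi_1}$ and $S_T$ ambient simple in some $\Pi_{\varphi_j}$ with $j>2$. Moreover $\ell(K_T)=\ell(T)-1\leq j-2\leq n-2$.
\end{enumerate}
By Lemma~\ref{lem:fringe-head-uniqueness}, the assignment $T\mapsto S_T$ is injective on isomorphism classes. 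Since $j>2$, the top $S_T$ lies neither in $\Pi_{\varphi_1}$ nor in $\Pi_{\varphi_2}$.

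Fix $\varepsilon=1/n$. We define $\theta_2$ on ambient simples by
\begin{equation*}
  \theta_2(S)=\begin{cases}
    f(S)+\varepsilon,& S\in\Pi_{\varphi_1},\\
    0,& S\in\Pi_{\varphi_2},\\
    -\theta_2(K_T),& S=S_T\text{ for a non-ambient simple }T\text{ of }\Pi_{\varphi_2},\\
    f(S)+1,&\text{otherwise}.
  \end{cases}
\end{equation*}
This is well defined by the injectivity of $T\mapsto S_T$ and because the four cases are disjoint. The third line only uses values on simples in $\Pi_{\varphi_1}$, since $\Pi_{\varphi_1}$ is Serre.

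\emph{Checking $\theta_2$.} On the simples of $\Pi_{\varphi_2}$, $\theta_2$ vanishes: trivially on ambient ones, and $\theta_2(T)=\theta_2(K_T)+\theta_2(S_T)=0$ for the others. Hence $\theta_2(H_2)=0$.

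The inequality $\theta_2>f$ is clear in the first, second and fourth lines; the second uses $f(S)<0$ for $S\in\Pi_{\varphi_2}$. For $S=S_T$ we have $\theta_2(K_T)=f(K_T)+\varepsilon\ell(K_T)$, so the requirement $\theta_2(S_T)>f(S_T)$ is equivalent to
\begin{equation*}
  f(T)=f(K_T)+f(S_T)<-\varepsilon\ell(K_T).
\end{equation*}
This holds because $f(T)\leq-1$ by integrality and $\varepsilon\ell(K_T)\leq(n-2)/n<1$. This is the step I expect to be the crux: the bound on $\ell(K_T)$ from Lemma~\ref{lem:fringe-simple-quotients} is what keeps a uniform $\varepsilon$ working even with infinitely many simples.

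\emph{Choosing $\theta_1$.} Set $\theta_1(S)=0$ for $S\in\Pi_{\varphi_1}$ and $\theta_1(S)=\theta_2(S)+1$ otherwise. Then $\theta_1(H_1)=0$, as in the case $r=1$. For $S\in\Pi_{\varphi_1}$ we have $\theta_1(S)=0>f(S)+\varepsilon=\theta_2(S)$, because $f(S)\leq-1$ and $\varepsilon<1$. Thus $\theta_1>\theta_2>f=\theta_3$, completing the construction.
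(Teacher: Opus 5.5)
Your proposal is correct and follows the paper's proof essentially step for step. It uses the same prescription of $\theta_2$ on ambient simples via the tops $S_T$ from Lemmas~\ref{lem:fringe-simple-quotients} and~\ref{lem:fringe-head-uniqueness}, the same integrality estimate $-f(T)\geq1>\varepsilon\,\ell(K_T)$ with $\ell(K_T)\leq n-2$, and the same $\theta_1$, which is the paper's $\eta$ applied to $h=\theta_2$. The only differences are cosmetic: you take $\varepsilon=1/n$ where the paper takes $1/(n-1)$, and you check $\theta_1>\theta_2$ directly on the simples of $\Pi_{\varphi_1}$ rather than through the negativity of $\theta_2$ on $\Pi_{\varphi_1}$.
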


\begin{proof}
  It suffices to prove the left-hand assertion; the right-hand one follows in $\cA^{\mathrm{op}}$ by reversing the phase order and negating the homomorphisms. The case $r=0$ is immediate.

  We first establish a rational-valued version of the case $r=1$, which will also be used when $r=2$.
  \begin{claim}
    For every homomorphism $h:K_0(\cA)\to\bQ$ satisfying $h(X)<0$ for $0\ne X\in\Pi_{\varphi_1}$, there is a homomorphism $\eta:K_0(\cA)\to\bQ$ with $\eta(H_1)=0$ and $\eta>h$.
  \end{claim}

  \begin{proof}[Proof of the claim]
    Define $\eta:K_0(\cA)\to\bQ$ on the basis $\{[S]:S\in\operatorname{Sim}\cA\}$ by
    \begin{equation}
      \eta(S)=
      \begin{cases}
        0,&S\in\Pi_{\varphi_1},\\
        h(S)+1,&S\notin\Pi_{\varphi_1}.
      \end{cases}
    \end{equation}
    Then $\eta>h$. By Lemma~\ref{lem:extreme slices are Serre}, $\Pi_{\varphi_1}$ is Serre, so $\eta(H_1)=0$.
  \end{proof}

  For $r=1$, apply the claim with $h=f$ and set $\theta_1=\eta$.

  For $r=2$, we have $n\geq3$. By Lemma~\ref{lem:fringe-simple-quotients}(\ref{item:fringe-simple-quotients}), each simple $T$ of $\Pi_{\varphi_2}$ which is not ambient simple fits into $0\to K_T\to T\to S_T\to0$, where $K_T\in\Pi_{\varphi_1}$, $\ell(K_T)\leq n-2$, and $S_T$ is an ambient simple of phase greater than $\varphi_2$. Moreover, by Lemma~\ref{lem:fringe-head-uniqueness}, $S_T\simeq S_{T'}$ implies $T\simeq T'$. Define on ambient simples
  \begin{equation}\label{eq:left-fringe-forms}
    \theta_2(S)=
    \begin{cases}
      f(S)+\dfrac{1}{n-1},&S\in\Pi_{\varphi_1},\\
      0,&S\in\Pi_{\varphi_2},\\
      -f(K_T)-\dfrac{\ell(K_T)}{n-1},&S\simeq S_T,\\
      f(S)+1,&\text{otherwise}.
    \end{cases}
  \end{equation}
  Since $S_T\simeq S_{T'}$ implies $T\simeq T'$, $\theta_2$ is well-defined. \revision{Since $f(T)$ is a negative integer, $-f(T)\geq1$. The bound $\ell(K_T)\leq n-2$ gives the following estimate for every $T$, without requiring finitely many ambient simples:}
  \begin{equation}
    \theta_2(S_T)-f(S_T) =-f(T)-\frac{\ell(K_T)}{n-1} \geq1-\frac{n-2}{n-1}>0.
  \end{equation}
  Together with $f(S)\leq-1$ for $S\in\operatorname{Sim}\cA\cap\Pi_{\varphi_2}$, \eqref{eq:left-fringe-forms} gives $\theta_2>f$. Since $\Pi_{\varphi_1}$ is Serre, $\theta_2(K_T)=f(K_T)+\ell(K_T)/(n-1)=-\theta_2(S_T)$. Thus $\theta_2(T)=0$ for every simple $T$ of $\Pi_{\varphi_2}$, and $\theta_2(H_2)=0$. Since $\Pi_{\varphi_1}$ is Serre and $f(S)\leq-1$ for $S\in\operatorname{Sim}\cA\cap\Pi_{\varphi_1}$, every $0\ne X\in\Pi_{\varphi_1}$ satisfies
  \begin{equation}
    \theta_2(X) =\sum_{S\in\operatorname{Sim}\cA}m_S(X)\theta_2(S) \leq-\frac{n-2}{n-1}\ell(X)<0.
  \end{equation}
  Applying the claim with $h=\theta_2$ and setting $\theta_1=\eta$ gives $\theta_1>\theta_2>f$ and $\theta_1(H_1)=0$.
\end{proof}

\revision{Combining Lemmas~\ref{lem:adjacent-numerical-cuts} and~\ref{lem:two-fringe-extension} covers every phase set of size at most six.}

\begin{samepage}
\begin{theorem}\label{thm:small-phase-induction}\label{thm:three-phase-wide-criterion}\label{thm:four-phase-wide-criterion}\label{thm:five-phase-module-criterion}\label{thm:six-phase-module-criterion}
  Let $\cA$ be an essentially small abelian length category, and let $(\Phi,\{\Pi_\varphi\}_{\varphi\in\Phi})$ be a stability data on $\cA$, where $\Phi=\{\varphi_1<\cdots<\varphi_n\}$ and $1\leq n\leq6$. There exists a stability function $\mu:\operatorname{Obj}^*(\cA)\to\review{\bQ}$ inducing $(\Phi,\{\Pi_\varphi\}_{\varphi\in\Phi})$ if and only if every $\Pi_\varphi$ is wide. More precisely, when every $\Pi_\varphi$ is wide, there are group homomorphisms $\theta_1,\ldots,\theta_n:K_0(\cA)\to\bQ$ satisfying
  \begin{equation}\label{eq:five-module-decreasing-forms}
    \begin{aligned}
      \theta_i(H_i)&=0&&\quad(1\leq i\leq n),\\
      \theta_i([S])&>\theta_{i+1}([S]) &&\quad(1\leq i<n,\ S\in\operatorname{Sim}\cA).
    \end{aligned}
  \end{equation}
\end{theorem}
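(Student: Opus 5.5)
The "only if" direction is Proposition~\ref{prop:wideness of stability data}. For the converse, assume every $\Pi_\varphi$ is wide. It suffices to build $\theta_1,\ldots,\theta_n:K_0(\cA)\to\bQ$ satisfying \eqref{eq:five-module-decreasing-forms}. Lemma~\ref{lem:decreasing-cuts-induction}, applied with $\bF=\bQ$, then gives a stability function $\mu:\operatorname{Obj}^*(\cA)\to[1,n]_\bQ\subseteq\bQ$ inducing the data. For $n=1$ we take $\theta_1=0$. For $n\geq2$ we split the phase chain into a central pair $\{\varphi_i,\varphi_{i+1}\}$, at most two phases to its left and at most two to its right. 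Concretely, choose $i=\lceil n/2\rceil$ when $n$ is even and $i=\lfloor n/2\rfloor$ when $n$ is odd, so that $i-1\leq2$ and $n-(i+1)\leq2$ whenever $n\leq6$.

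Apply Lemma~\ref{lem:adjacent-numerical-cuts} to this $i$. This yields $f,g:K_0(\cA)\to\bZ$ with $f-g=\ell$, $f(H_i)=0$ and $g(H_{i+1})=0$. We set $\theta_i=f$ and $\theta_{i+1}=g$. Then $\theta_i([S])-\theta_{i+1}([S])=\ell(S)=1>0$ for every ambient simple $S$. By the sign table \eqref{eq:adjacent-form-signs}, $f(X)<0$ for all nonzero $X\in\Pi_{\varphi_j}$ with $j<i$, and $g(X)>0$ for all nonzero $X\in\Pi_{\varphi_j}$ with $j>i+1$.

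Next apply Lemma~\ref{lem:two-fringe-extension} twice. On the left, use $r=i-1\in\{0,1,2\}$ with the integer-valued $f$ in the role of $\theta_{r+1}$. This produces $\theta_1,\ldots,\theta_{i-1}$ with $\theta_j(H_j)=0$ and $\theta_1>\cdots>\theta_{i-1}>f$. On the right, use the dual statement with $r=n-i-1\in\{0,1,2\}$ and $g$ in the role of $\theta_{n-r}$. This produces $\theta_{i+2},\ldots,\theta_n$ with $\theta_j(H_j)=0$ and $g>\theta_{i+2}>\cdots>\theta_n$.

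One technical point needs care. The dual half of Lemma~\ref{lem:two-fringe-extension} is phrased with respect to the full phase set $\{\varphi_1<\cdots<\varphi_n\}$. Since $r<n$ holds in both applications, the hypotheses are met; we only need to check that the index bookkeeping agrees: the left block ends at index $r+1=i$ and the right block begins at $n-r=i+1$. Concatenating the three blocks gives $\theta_1>\theta_2>\cdots>\theta_n$ on every ambient simple, each $\theta_j$ vanishing on $H_j$, which is exactly \eqref{eq:five-module-decreasing-forms}.

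The only delicate step is verifying the hypotheses of the two fringe extensions, since those lemmas carry the real work. This amounts to checking the strict sign conditions on the outer phases, which the table in Lemma~\ref{lem:adjacent-numerical-cuts} supplies directly. The requirement that $f$ and $g$ be integer-valued, needed in the $r=2$ case, holds by construction. The bound $n\leq6$ enters only through $i-1\leq2$ and $n-i-1\leq2$.
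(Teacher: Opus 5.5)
Your proposal is correct and follows the paper's proof: Lemma~\ref{lem:adjacent-numerical-cuts} gives the central cut, Lemma~\ref{lem:two-fringe-extension} extends it on each side, and Lemma~\ref{lem:decreasing-cuts-induction} with $\bF=\bQ$ turns the resulting homomorphisms into a stability function. The only difference is bookkeeping for odd $n$: the paper uses a single central phase $a=(n+1)/2$ with $\theta_a=f$ and equal fringes of size $(n-1)/2$, whereas you keep the pair $(f,g)$ at the center and use fringes of sizes $(n-3)/2$ and $(n-1)/2$, which still satisfy $r\leq2$.
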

\end{samepage}

\begin{proof}
  Necessity is Proposition~\ref{prop:wideness of stability data}. Assume every semistable subcategory is wide. For $n=1$, take $\theta_1=0$. For $2\leq n\leq6$, put
  \begin{equation}
    a=\left\lfloor\frac{n+1}{2}\right\rfloor,\qquad b=\left\lceil\frac{n+1}{2}\right\rceil,\qquad r=a-1=n-b\leq2.
  \end{equation}
  Since $a<n$ and $b-a\in\{0,1\}$, Lemma~\ref{lem:adjacent-numerical-cuts} with $i=a$ gives homomorphisms $f,g:K_0(\cA)\to\bZ$ such that $f-g=\ell$ and, for every $1\leq j\leq n$ and $0\ne X\in\Pi_{\varphi_j}$,
  \begin{equation}
    \renewcommand{\arraystretch}{1.2}
    \begin{array}{c|cccc}
      &j<a&j=a&j=a+1&j>a+1\\\hline
      f(X)&<0&=0&>0&>0\\
      g(X)&<0&<0&=0&>0
    \end{array}.
  \end{equation}
  In particular, $f(H_a)=g(H_{a+1})=0$ and $f(M)-g(M)=\ell(M)>0$ for $0\ne M\in\cA$. Put
  \begin{equation}
    \theta_j=f-(j-a)\ell\qquad(a\leq j\leq b).
  \end{equation}
  Then $\theta_j(H_j)=0$ for $a\leq j\leq b$, $\theta_j-\theta_{j+1}=\ell>0$ for $a\leq j<b$, and
  \begin{equation}
    \begin{aligned}
      \theta_a(X)&<0&&\quad(1\leq j<a,\ 0\ne X\in\Pi_{\varphi_j}),\\
      \theta_b(X)&>0&&\quad(b<j\leq n,\ 0\ne X\in\Pi_{\varphi_j}).
    \end{aligned}
  \end{equation}
  Since $a-1=n-b=r\leq2$, Lemma~\ref{lem:two-fringe-extension} extends $(\theta_j)_{a\leq j\leq b}$ to homomorphisms $\theta_1,\ldots,\theta_n:K_0(\cA)\to\bQ$ satisfying \eqref{eq:five-module-decreasing-forms}.

  This verifies Theorem~\ref{thm:positive-class-criterion}(3) with $\bF=\bQ$. Lemma~\ref{lem:decreasing-cuts-induction} gives an inducing stability function with values in $\bQ\subseteq\bR$.
\end{proof}

\subsection{Sharpness of the six-phase bound}\label{subsec:seven-phase-central}

The bound in Theorem~\ref{thm:small-phase-induction} is sharp: the following $D_4$ example has seven semisimple wide semistable subcategories but is not induced by any stability function. Any inducing stability function would assign two distinct phase values to the same object.

\begin{example}\label{ex:d4-seven-phase-counterexample}
  Let $K=\bC$, and take $\cA=\mod KQ$, where $Q$ is the quiver
  \begin{equation}
    \begin{tikzcd}[column sep=small,row sep=small]
      3\arrow[r]&2\arrow[r]&1\\
      &4\arrow[u]&
    \end{tikzcd}
  \end{equation}
  Write $i$ for the simple $KQ$-module at vertex $i$. We display modules by their radical layers, with their tops in the bottom row. Except for $X=\radlayers{1\\2\;2\\3\;4}$, each displayed module has vertex spaces of dimension at most one: the nonzero spaces are $K$, and the arrows between them are identities. For $X$, take
  \begin{equation}\label{eq:d4-central-representation}
    \begin{gathered}
      (X_1,X_2,X_3,X_4)=(K,K^2,K,K),\\[4pt]
      X_{2\to1}=(1\ \ 1),\qquad X_{3\to2}=\binom10,\qquad X_{4\to2}=\binom01.
    \end{gathered}
  \end{equation}
  Take $\Phi=\{\varphi_1<\cdots<\varphi_7\}$. The twelve indecomposables given by Gabriel's theorem \cite{Gabriel1972} have the following Auslander--Reiten quiver. Define $\Pi_{\varphi_i}$ as the additive subcategory generated by the vertices in the column labeled $\varphi_i$.
  \begin{equation}\label{eq:d4-seven-slices}
    \begin{tikzpicture}[x=1.65cm,y=1.05cm,baseline=(current bounding box.center),
      every node/.style={inner sep=3pt},>=stealth]
      \foreach \i in {1,...,7}
        \node at ({\i-1},1.65) {$\varphi_{\i}$};
      \node (s1) at (0,0) {$1$};
      \node (m12) at (1,0) {$\radlayers{1\\2}$};
      \node (m123) at (2,1) {$\radlayers{1\\2\\3}$};
      \node (s2) at (2,0) {$2$};
      \node (m124) at (2,-1) {$\radlayers{1\\2\\4}$};
      \node (x) at (3,0) {$\radlayers{1\\2\;2\\3\;4}$};
      \node (m24) at (4,1) {$\radlayers{2\\4}$};
      \node (m1234) at (4,0) {$\radlayers{1\\2\\3\;4}$};
      \node (m23) at (4,-1) {$\radlayers{2\\3}$};
      \node (m234) at (5,0) {$\radlayers{2\\3\;4}$};
      \node (s3) at (6,1) {$3$};
      \node (s4) at (6,-1) {$4$};
      \draw[->] (s1)--(m12);
      \foreach \v in {m123,s2,m124} {
        \draw[->] (m12)--(\v);
        \draw[->] (\v)--(x);
      }
      \foreach \v in {m24,m1234,m23} {
        \draw[->] (x)--(\v);
        \draw[->] (\v)--(m234);
      }
      \draw[->] (m234)--(s3);
      \draw[->] (m234)--(s4);
    \end{tikzpicture}
  \end{equation}

  \par\normalfont
  \revision{There is no path in the Auslander--Reiten quiver from a later column to an earlier one. Hence $\hom(\Pi_{\varphi_j},\Pi_{\varphi_i})=0$ for $j>i$. The vertices in each column are Hom-orthogonal bricks. AR duality gives $\ext^1(E,F)=0$ whenever $E,F$ lie in the same column.} Consequently each $\Pi_{\varphi_i}$ is semisimple and wide.

  Every object has a decomposition $M\simeq\bigoplus_{i=1}^7M_i$ with $M_i\in\Pi_{\varphi_i}$. The split filtration
  \begin{equation}\label{eq:d4-split-hn}
    0\subseteq M_7\subseteq M_7\oplus M_6\subseteq\cdots
    \subseteq M_7\oplus\cdots\oplus M_1\simeq M
  \end{equation}
  is an HN filtration after repeated terms are removed. Hence $(\Phi,\{\Pi_{\varphi_i}\}_{i=1}^7)$ is a stability data.

  \revision{The system $(\Phi,\{\Pi_{\varphi_i}\}_{i=1}^7)$ is not induced by any stability function.} Set $Z=1\oplus4$. In $K_0(\cA)$,
  \begin{equation}\label{eq:d4-coordinate-exact-rows}
    [Z]=\left[\radlayers{1\\2\\4}\right]-[2]
    =\left[\radlayers{1\\2\\3\;4}\right]-\left[\radlayers{2\\3}\right].
  \end{equation}
  \review{Suppose that $\mu$ induces this stability data, with value $t_i$ on $\Pi_{\varphi_i}\setminus\{0\}$, where $t_1<\cdots<t_7$. By Lemma~\ref{lem:stability-splitting-neutrality}, objects with the same class in $K_0(\cA)$ have the same $\mu$-value. Therefore
  \begin{equation}
    \mu(2\oplus Z)=\mu\left(\radlayers{1\\2\\4}\right)=\mu(2)=t_3.
  \end{equation}
  \begin{equation}
    \mu\left(\radlayers{2\\3}\oplus Z\right)
    =\mu\left(\radlayers{1\\2\\3\;4}\right)
    =\mu\left(\radlayers{2\\3}\right)=t_5.
  \end{equation}
  For each $A\in\{2,\radlayers{2\\3}\}$, the split exact sequence
  \begin{equation}
    0\longrightarrow A\longrightarrow A\oplus Z\longrightarrow Z\longrightarrow0
  \end{equation}
  has $\mu(A)=\mu(A\oplus Z)$ by the equalities above. Strict see-saw therefore forces $\mu(Z)=\mu(A)$. The two choices of $A$ give $\mu(Z)=t_3$ and $\mu(Z)=t_5$, contradicting $t_3<t_5$.}
  For the full phase chain $\varphi_1<\cdots<\varphi_7$, the tuple $(\mathbf x_0,\ldots,\mathbf x_7)=(\mathbf0,\mathbf0,\mathbf0,[Z],[Z],\mathbf0,\mathbf0,\mathbf0)$ is a nonzero solution of \eqref{eq:positive-class-flow}, since $0\ne[Z]\in H_3\cap H_5\cap K_0(\cA)_{\bQ_{\geq0}}$. Thus condition~(2) of Theorem~\ref{thm:positive-class-criterion} fails.
\end{example}

Thus seven is the least number of wide semistable subcategories for which induction by a stability function can fail.

\begin{remark}\label{rem:seven-phase-extension}
  For every finite $n>7$, work in $\mod(KQ\times K^{n-7})$ and append to the seven subcategories of Example~\ref{ex:d4-seven-phase-counterexample} the subcategories $\add T_1,\ldots,\add T_{n-7}$, where $T_j$ is supported on the $j$th $K$-factor. The resulting ordered subcategories are wide and give a stability data with HN filtrations obtained by direct sums. Any inducing stability function would restrict to one for the seven-phase example, so no such function exists.
\end{remark}

\section{\texorpdfstring{\revision{A counterexample to the red-path conjecture}}{A counterexample to the red-path conjecture}}\label{sec:mgs-red-paths}

\revision{We first recall some notations and results from \cite{BST}. We then prove a converse to their red-path theorem at the level of stability data and give a counterexample to their red-path conjecture.}

\begingroup
Assume that $K_0(\cA)$ has finite rank $d$. Choose simple representatives $S_1,\ldots,S_d$ and identify $K_0(\cA)$ with $\bZ^d$ using the basis $[S_1],\ldots,[S_d]$. The $j$th coordinate of $[M]$ is the composition multiplicity $m_{S_j}(M)$. A \emph{red path} is a continuous map $\boldsymbol\gamma:[0,1]\to\bR^d$ with
\begin{equation}\label{eq:red-path-endpoints}
  \boldsymbol\gamma(0)=\mathbf1,\qquad
  \boldsymbol\gamma(1)=-\mathbf1,\qquad \mathbf1=(1,\ldots,1),
\end{equation}
such that, for every nonzero object $M$, the real-valued function
\begin{equation}
  \rho_M(t)=\langle\boldsymbol\gamma(t),[M]\rangle,\qquad t\in[0,1],
\end{equation}
has exactly one zero, denoted by $t_M$. Here $\langle\cdot,\cdot\rangle$ denotes the standard inner product on $\bR^d$.  

The following theorem associates a stability function to every red path.

\begin{theorem}[Br\"ustle--Smith--Treffinger, Theorem~4.8]\label{thm:bst-red-path-stability}
  Let $\cA$ be an abelian length category with $\operatorname{rank}K_0(\cA)=d<\infty$. Every red path $\boldsymbol\gamma:[0,1]\to\bR^d$ induces a stability function
  \begin{equation}
    \mu_{\boldsymbol\gamma}:\operatorname{Obj}^*(\cA)\longrightarrow[0,1],\qquad
    \mu_{\boldsymbol\gamma}(M)=t_M,
  \end{equation}
  where $t_M$ is the unique zero of $t\mapsto\langle\boldsymbol\gamma(t),[M]\rangle$.
\end{theorem}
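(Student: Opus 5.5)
The plan is to follow the proof pattern of Proposition~\ref{prop:red-path-criterion}, specialised to $\bF=\bR$ and $I=[0,1]$, with $F_t(\mathbf v)=\langle\boldsymbol\gamma(t),\mathbf v\rangle$. The statement is that $\mu_{\boldsymbol\gamma}(M)=t_M$ is a stability function. First, $\mu_{\boldsymbol\gamma}$ is well defined and isomorphism-invariant: $\rho_M$ depends only on $[M]$, and by hypothesis it has a unique zero.

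Next I would establish the sign pattern required in Proposition~\ref{prop:red-path-criterion}:
\begin{equation}
  \rho_M(t)>0\ (t<t_M),\qquad \rho_M(t)<0\ (t>t_M).
\end{equation}
For $M\ne0$ we have $[M]\in\bZ_{\geq0}^d\setminus\{\mathbf0\}$. Hence $\rho_M(0)=\langle\mathbf1,[M]\rangle=\ell(M)>0$ and $\rho_M(1)=-\ell(M)<0$. Since $\rho_M$ is continuous on $[0,1]$ and vanishes only at $t_M$, the intermediate value theorem shows it cannot change sign on $[0,t_M)$ or on $(t_M,1]$. Comparing with the endpoint values gives the stated signs; note that $t_M\in(0,1)$.

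With this sign pattern, the argument of Proposition~\ref{prop:red-path-criterion} applies verbatim over $\bR$. Take $0\to L\to M\to N\to0$ with nonzero terms and set $t=t_M$. Additivity gives $\rho_L(t)+\rho_N(t)=0$, and the sign pattern then forces exactly one of $t_L<t_M<t_N$, $t_L=t_M=t_N$, or $t_L>t_M>t_N$. Hence $\mu_{\boldsymbol\gamma}$ satisfies the see-saw property.

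The only genuinely new point compared with Proposition~\ref{prop:red-path-criterion} is that the family $F_t$ is only continuous in $t$, not piecewise linear. Consequently the strict sign pattern is not automatic from the definition of $t_M$ and must be deduced from uniqueness of the zero together with the endpoint conditions \eqref{eq:red-path-endpoints} and the intermediate value theorem. This is the step needing care, though it is short; everything else is routine bookkeeping.
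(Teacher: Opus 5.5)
Your proof is correct and follows essentially the same route as the paper: the paper does not reprove this theorem but isolates its argument as Proposition~\ref{prop:red-path-criterion} (adapted from Br\"ustle--Smith--Treffinger's proof). The only input that proposition needs is the strict sign pattern of $\rho_M$, which you derive correctly from $\rho_M(0)=\ell(M)>0>-\ell(M)=\rho_M(1)$, continuity, uniqueness of the zero and the intermediate value theorem. One small correction: the proposition never assumes piecewise linearity, only this sign pattern, so continuity of $\boldsymbol\gamma$ and the endpoint conditions are all that is required.
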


Saying that $\boldsymbol\gamma$ \emph{induces} \eqref{eq:mgs-definition} means that $\mu_{\boldsymbol\gamma}$ induces it in the sense of Section~2, with $p\in[0,1]$ in \eqref{eq:mgs-function-torsion}. Since no nonzero object has zero time $0$ or $1$, \revision{the target $[0,1]$ already provides both end torsion classes.}
\par\endgroup

\revision{We now prove the converse at the level of stability data.}

\begingroup
\begin{corollary}\label{cor:finite-induction-red-path}
  \revision{Let $(\Phi,\{\Pi_\varphi\}_{\varphi\in\Phi})$ be a stability data on $\cA$ with finitely many phases, and assume $K_0(\cA)$ has finite rank $d$. Then $(\Phi,\{\Pi_\varphi\}_{\varphi\in\Phi})$ is induced by a stability function into a totally ordered set if and only if it is induced by a red path. When an inducing stability function exists, an inducing red path may be chosen piecewise linear with rational vertices and strictly decreasing coordinates.}
\end{corollary}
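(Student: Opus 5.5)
The reverse implication is immediate from Theorem~\ref{thm:bst-red-path-stability}. A red path $\boldsymbol\gamma$ inducing the stability data yields the stability function $\mu_{\boldsymbol\gamma}:\operatorname{Obj}^*(\cA)\to[0,1]$, and by definition $\mu_{\boldsymbol\gamma}$ induces the data.

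For the forward implication, assume an inducing stability function exists. Write $\Phi=\{\varphi_1<\cdots<\varphi_n\}$. By Corollary~\ref{cor:finite-phase-integral-separators} there are integer-valued homomorphisms $\theta_1,\ldots,\theta_n:K_0(\cA)\to\bZ$ with $\theta_i(H_i)=0$ and $\theta_i([S])-\theta_{i+1}([S])\geq1$ for every simple $S$. I identify $\theta_i$ with its coefficient vector $\bm\theta_i\in\bZ^d$, so $\theta_i(M)=\langle\bm\theta_i,[M]\rangle$.

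Next I add endpoints. Choose an integer $c>\max_{i,j}|\theta_i([S_j])|$ and put $\bm\theta_0=c\mathbf1$ and $\bm\theta_{n+1}=-c\mathbf1$. Then every coordinate strictly decreases along $\bm\theta_0,\bm\theta_1,\ldots,\bm\theta_{n+1}$. Dividing all vectors by $c$ keeps the coordinates rational and makes the endpoints exactly $\mathbf1$ and $-\mathbf1$.

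Now define $\boldsymbol\gamma$ as the piecewise linear path through these vertices. Concretely, on $[\tfrac{i}{n+1},\tfrac{i+1}{n+1}]$ it is the linear interpolation between $\bm\theta_i/c$ and $\bm\theta_{i+1}/c$. Each coordinate of $\boldsymbol\gamma$ is then strictly decreasing in $t$, so the path has rational vertices and strictly decreasing coordinates as claimed.

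Then I check that $\boldsymbol\gamma$ is a red path. For nonzero $M$, the vector $[M]$ has nonnegative coordinates and is nonzero. Hence $\rho_M(t)=\langle\boldsymbol\gamma(t),[M]\rangle$ is continuous and strictly decreasing, with $\rho_M(0)=\ell(M)>0$ and $\rho_M(1)=-\ell(M)<0$. It therefore has exactly one zero $t_M$.

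It remains to show that $\mu_{\boldsymbol\gamma}$ induces the data. If $0\ne E\in\Pi_{\varphi_i}$, then $\rho_E(\tfrac{i}{n+1})=\theta_i(E)/c=0$, so $\mu_{\boldsymbol\gamma}(E)=\tfrac{i}{n+1}$. These values are constant on each $\Pi_{\varphi_i}$ and strictly increase with $i$. Since $\mu_{\boldsymbol\gamma}$ is a stability function by Theorem~\ref{thm:bst-red-path-stability}, Lemma~\ref{lem:HN-recognition} gives that $\mu_{\boldsymbol\gamma}$ induces $(\Phi,\{\Pi_\varphi\}_{\varphi\in\Phi})$.

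The main obstacle is the integrality and separation supplied by Corollary~\ref{cor:finite-phase-integral-separators}. Once strictly decreasing separators exist in finite rank, the remaining step is to observe that strict monotonicity in each coordinate, combined with nonnegativity of classes, forces a unique zero. This is the same computation as in the claim of Lemma~\ref{lem:decreasing-cuts-induction}.
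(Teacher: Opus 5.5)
Your proof is correct and follows the paper's argument essentially step for step. Both use the integer separators from Corollary~\ref{cor:finite-phase-integral-separators}, the endpoint forms $\pm c\ell$, the piecewise linear path with vertices $\bm\theta_i/c$ at times $i/(n+1)$, the unique-zero check via nonnegative classes, and Lemma~\ref{lem:HN-recognition}, with the converse coming from Theorem~\ref{thm:bst-red-path-stability}.
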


\begin{proof}
  Write $\Phi=\{\varphi_1<\cdots<\varphi_n\}$ and choose ambient simple representatives $S_1,\ldots,S_d$. \revision{If $(\Phi,\{\Pi_\varphi\}_{\varphi\in\Phi})$ is induced by a stability function,} Corollary~\ref{cor:finite-phase-integral-separators} gives integer-valued homomorphisms $\theta_1,\ldots,\theta_n$ with $\theta_i(H_i)=0$ and $\theta_i(S_j)>\theta_{i+1}(S_j)$. Choose a positive integer $c>\max_{i,j}|\theta_i(S_j)|$ and put $\theta_0=c\ell$, $\theta_{n+1}=-c\ell$. Define
  \begin{equation}\label{eq:finite-induction-red-path}
    \boldsymbol\gamma\!\left(\frac{i}{n+1}\right)
    =\frac1c\bigl(\theta_i(S_1),\ldots,\theta_i(S_d)\bigr)
    \qquad(0\leq i\leq n+1),
  \end{equation}
  and interpolate linearly between consecutive vertices. \revision{The vertices in \eqref{eq:finite-induction-red-path} have rational coordinates,} and every coordinate strictly decreases from $1$ to $-1$. For every nonzero $M$, $\rho_M(t)=\langle\boldsymbol\gamma(t),[M]\rangle$ therefore strictly decreases from $\ell(M)$ to $-\ell(M)$ and has exactly one zero. Thus $\boldsymbol\gamma$ is a red path. If $0\ne E\in\Pi_{\varphi_i}$, its zero is $i/(n+1)$, so Lemma~\ref{lem:HN-recognition} shows that $\mu_{\boldsymbol\gamma}$ \revision{induces $(\Phi,\{\Pi_\varphi\}_{\varphi\in\Phi})$.} Conversely, every red path yields a stability function by Theorem~\ref{thm:bst-red-path-stability}.
\end{proof}

\begin{corollary}\label{cor:short-mgs-red-path}
  Every maximal green sequence of length at most six in an essentially small abelian length category of finite rank is induced by a red path. The path may be chosen piecewise linear with rational vertices and strictly decreasing coordinates.
\end{corollary}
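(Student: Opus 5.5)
The plan is to reduce to Theorem~\ref{thm:small-phase-induction} and Corollary~\ref{cor:finite-induction-red-path}. Fix a maximal green sequence $0=\cT_0\subsetneq\cT_1\subsetneq\cdots\subsetneq\cT_m=\cA$ with $m\leq6$ in $\cA$ of finite rank $d$. By \cite{LQInfiniteMGS}, such a finite maximal green sequence corresponds to a finest stability data with $m$ phases. I will write it as $\Phi=\{\varphi_1<\cdots<\varphi_m\}$. The torsion classes are recovered as $\cT_{m-i}=\filt(\Pi_{\varphi_{i+1}},\ldots,\Pi_{\varphi_m})$, or the indexing convention of that paper, whichever is fixed there. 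Each $\Pi_{\varphi_i}$ is the brick-generated subcategory $\filt(B_i)$, where $B_i$ is the brick labelling the cover $\cT_{m-i}\subsetneq\cT_{m-i+1}$.

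The first step is to show that each $\Pi_{\varphi_i}$ is wide. For a cover of torsion classes $\cT'\subsetneq\cT$, the subcategory $\cT\cap(\cT')^{\perp}$ is wide; this is the standard brick-labelling/wide-interval fact. For a cover in a maximal green sequence, this subcategory equals $\filt(B)$ for a single brick $B$ and is simple as a wide subcategory. I will check that $\Pi_{\varphi_i}$ coincides with this intersection. If the correspondence in \cite{LQInfiniteMGS} only gives $\Pi_{\varphi_i}$ as an HN slice, I will instead argue directly. An object of $\cT_{m-i+1}\cap\cT_{m-i}^{\perp}$ has HN phases bounded on both sides by $\varphi_i$. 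Hence it lies in $\Pi_{\varphi_i}$, and the converse inclusion is immediate. I expect this identification to be the main point requiring care, since it depends on the precise dictionary between finest stability data and maximal green sequences.

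With wideness in hand and $m\leq6$, Theorem~\ref{thm:small-phase-induction} gives an inducing stability function. Corollary~\ref{cor:finite-induction-red-path} then gives an inducing red path $\boldsymbol\gamma$ for the stability data. This path is piecewise linear with rational vertices and strictly decreasing coordinates.

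The last step is to check that $\boldsymbol\gamma$ induces the maximal green sequence itself in the sense of Section~\ref{sec:preliminaries}. Since $\mu_{\boldsymbol\gamma}$ induces the stability data, its semistable objects are exactly those in the $\Pi_{\varphi_i}$, with values $t_1<\cdots<t_m$. The $\mu_{\boldsymbol\gamma}$-HN filtrations therefore agree with those of the stability data. Consequently, $\cT^{\mu}_{\geq p}=\filt(\Pi_{\varphi_j}:t_j\geq p)$. As $p$ ranges over $[0,1]$, these classes are exactly $\cT_0,\ldots,\cT_m$, including $0$ and $\cA$ at the endpoints, as noted after Theorem~\ref{thm:bst-red-path-stability}.
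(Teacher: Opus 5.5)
Your proposal is correct and follows the paper's proof. You pass from the maximal green sequence to its stability data with $m$ wide semistable subcategories via \cite{LiLiu2020} and \cite{LQInfiniteMGS}, apply Theorem~\ref{thm:small-phase-induction} and then Corollary~\ref{cor:finite-induction-red-path}, and use the endpoint convention to check that the classes $\cT^{\mu_{\boldsymbol\gamma}}_{\geq p}$ are exactly $\cT_0,\ldots,\cT_m$; your identification of each slice with $\cT_{m-i+1}\cap\cT_{m-i}^{\perp}$ to justify wideness only elaborates what the paper takes directly from those citations.
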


\begin{proof}
  \revision{Let $0=\cT_0\subsetneq\cdots\subsetneq\cT_m=\cA$ be a maximal green sequence. The corresponding complete Hom-orthogonal brick sequence determines a stability data $(\Phi,\{\Pi_\varphi\}_{\varphi\in\Phi})$ with $m$ wide semistable subcategories and finite HN filtrations, by \cite[Theorem~2.13]{LiLiu2020} and \cite[Theorem~3.17]{LQInfiniteMGS}. If $m\leq6$, Theorem~\ref{thm:small-phase-induction} gives an inducing stability function for $(\Phi,\{\Pi_\varphi\}_{\varphi\in\Phi})$. Corollary~\ref{cor:finite-induction-red-path} gives a red path $\boldsymbol\gamma$ inducing $(\Phi,\{\Pi_\varphi\}_{\varphi\in\Phi})$. The distinct torsion classes induced by $\mu_{\boldsymbol\gamma}$ are $\cT_0,\ldots,\cT_m$, by the finite HN filtrations and the endpoint convention of Section~2.}
\end{proof}
\par\endgroup

\begingroup
Their red-path conjecture is the following.

\begin{conjecture}[Br\"ustle--Smith--Treffinger, Conjecture~4.14]\label{conj:bst-red-path}
  Let $\cA$ be an abelian length category of finite rank. Every maximal green sequence in $\cA$ is induced by a red path.
\end{conjecture}

We disprove Conjecture~\ref{conj:bst-red-path} in Example~\ref{thm:d4-mgs-not-induced}, using the $D_4$ algebra of Example~\ref{ex:d4-seven-phase-counterexample}. In this example, the coordinate order is the vertex order $(1,2,3,4)$.
\par\endgroup

\begingroup
\begin{example}\label{thm:d4-mgs-not-induced}\label{cor:bst-red-path-counterexample}
  Let $\cA=\mod KQ$ be as in Example~\ref{ex:d4-seven-phase-counterexample}. With the same notation, set
  \begin{equation}\label{eq:d4-mgs-bricks}
    (B_1,\ldots,B_{11})
    =\left(1,\radlayers{1\\2},\radlayers{1\\2\\4},
      \radlayers{1\\2\\3},2,\radlayers{1\\2\;2\\3\;4},
      \radlayers{2\\4},\radlayers{1\\2\\3\;4},
      4,\radlayers{2\\3},3\right).
  \end{equation}
  For $\Phi=\{\varphi_1<\cdots<\varphi_{11}\}$, put $\Pi_{\varphi_j}=\add B_j$ and
  \begin{equation}\label{eq:d4-mgs-torsion-chain}
    \cT_0=0,\qquad
    \cT_i=\filt(\Pi_{\varphi_{12-i}},\ldots,\Pi_{\varphi_{11}})
    \quad(1\leq i\leq11).
  \end{equation}
  Then $(\Phi,\{\Pi_{\varphi_j}\}_{j=1}^{11})$ is a stability data with semisimple wide semistable subcategories. The chain \eqref{eq:d4-mgs-torsion-chain} is a maximal green sequence of length $11$ that is not induced by any stability function into a totally ordered set. In particular, it is not induced by a red path, so Conjecture~\ref{conj:bst-red-path} is false.
\end{example}

\begin{proof}
  The sequence $(B_1,\ldots,B_{11})$ is a complete backward Hom-orthogonal (CBHO) sequence of bricks in the sense of \cite[Definition~3.15]{LQInfiniteMGS}. Indeed, the Hom vanishings follow from Example~\ref{ex:d4-seven-phase-counterexample}, together with $\hom(B_{10},B_9)=\hom(\radlayers{2\\3},4)=0$. The only omitted indecomposable is $\radlayers{2\\3\;4}$. The nonzero maps $B_{10}\to\radlayers{2\\3\;4}\to B_9$ force any insertion of this module to lie after $B_{10}$ and before $B_9$, which is impossible.

  By \cite[Theorem~3.17]{LQInfiniteMGS}, $(\Phi,\{\Pi_{\varphi_j}\}_{j=1}^{11})$ is a finest system of stability data, and \eqref{eq:d4-mgs-torsion-chain} is the corresponding maximal green sequence, of length $11$.

  Suppose that a stability function $\mu$ induces this chain.
  By \cite[Theorems~3.5 and~3.17]{LQInfiniteMGS}, the stability
  data induced by $\mu$ is finest and agrees, up to an
  order-preserving relabelling of phases, with
  $(\Phi,\{\Pi_{\varphi_j}\}_{j=1}^{11})$.
  Since $B_j\in\Pi_{\varphi_j}$, we obtain
  \begin{equation}\label{eq:d4-mgs-forced-order}
  	\mu(B_1)<\mu(B_2)<\cdots<\mu(B_{11}).
  \end{equation}

  Put $Z=1\oplus3$. The dimension vectors give
  \begin{equation}\label{eq:d4-mgs-class-obstruction}
    [B_4]=[B_5]+[Z],\qquad [B_8]=[B_7]+[Z].
  \end{equation}
  By Lemma~\ref{lem:stability-splitting-neutrality},
  \begin{equation}
    \mu(B_4)=\mu(B_5\oplus Z),\qquad
    \mu(B_8)=\mu(B_7\oplus Z).
  \end{equation}
  Applying strict see-saw to the corresponding split exact sequences and using \eqref{eq:d4-mgs-forced-order}, we obtain $\mu(Z)<\mu(B_4)$ and $\mu(B_8)<\mu(Z)$. Hence
  \begin{equation}
    \mu(Z)<\mu(B_4)<\mu(B_8)<\mu(Z),
  \end{equation}
  a contradiction.

  Thus no stability function into any totally ordered set induces this chain. By Theorem~\ref{thm:bst-red-path-stability}, no red path induces it either.
\end{proof}

\begin{remark}
  The contradiction uses the decomposable object $Z=1\oplus3$. The unique-zero condition for a red path applies to every nonzero object, including such direct sums.
\end{remark}
\par\endgroup

\bibliographystyle{plain}
\bibliography{references}

\end{document}